\patchcmd{\subsection}{-.5em}{.5em}{}{}
\patchcmd{\subsubsection}{-.5em}{.5em}{}{}
\numberwithin{equation}{section}
\newcommand{\SL}{\operatorname{SL}}
\newcommand{\gsl}{\operatorname{\mathfrak{s}{l}}}
\newcommand{\kd}{\kappa}
\newcommand{\td}{\xi}
\newcommand{\xd}{X}
\newcommand{\pid}{\pi}
\newcommand{\sd}{s}
\newcommand{\ud}{u}
\newcommand{\md}{\mu}
\newcommand{\aad}{a}
\newcommand{\abd}{A}
\newcommand{\cC}{\mathcal{C}}
\newcommand{\cD}{\mathcal{D}}
\newcommand{\cI}{\mathcal{I}}
\newcommand{\cK}{\mathcal{K}}
\newcommand{\cP}{\mathcal{P}}
\newcommand{\cS}{\mathcal{S}}
\newcommand{\cT}{\mathcal{T}}
\newcommand{\bR}{\mathbb{R}}
\newcommand{\bS}{\mathbb{S}}
\newcommand{\bZ}{\mathbb{Z}}
\newcommand{\ra}{\rightarrow}
\newcommand{\qen}{\enskip \textrm{and} \enskip}
\newcommand{\qand}{\quad \textrm{and} \quad}
\newcommand\subsetsim{\mathrel{%
\ooalign{\raise0.2ex\hbox{$\subset$}\cr\hidewidth\raise-0.8ex\hbox{\scalebox{0.9}{$\sim$}}\hidewidth\cr}}}
\newcommand{\eps}{\varepsilon}
\DeclareMathOperator{\supp}{supp}
\DeclareMathOperator{\cum}{cum}
\DeclareMathOperator{\sgn}{sgn}
\DeclareMathOperator{\Diag}{Diag}
\DeclareMathOperator{\Leb}{Leb}
\DeclareMathOperator{\Vol}{Vol}
\newcommand{\bd}{\underline{d}}
\newcommand{\bi}{\underline{i}}
\newcommand{\bu}{\underline{u}}
\newcommand{\ba}{\underline{a}}
\tikzstyle{decision} = [diamond, draw, fill=blue!20, 
\tikzstyle{block} = [rectangle, draw, fill=blue!20, 
\tikzstyle{line} = [draw, -latex']
\tikzstyle{cloud} = [draw, ellipse,fill=red!20, node distance=3cm,
\definecolor{lichtgrijs}{gray}{0.95}
\newmdtheoremenv[style=mystyle]{theorem}{Theorem}[section]
\newmdtheoremenv[style=mystyle]{proposition}[theorem]{Proposition}
\newmdtheoremenv[style=mystyle]{lemma}[theorem]{Lemma}
\newmdtheoremenv[style=mystyle]{corollary}[theorem]{Corollary}
\theoremstyle{definition}
\newtheorem{definition}[theorem]{Definition}
\newtheorem{remark}[theorem]{Remark}
\tikzstyle{decision} = [diamond, draw, fill=blue!20, 
\tikzstyle{block} = [rectangle, draw, fill=blue!20, 
\tikzstyle{line} = [draw, -latex']
\tikzstyle{cloud} = [draw, ellipse,fill=red!20, node distance=3cm,
\renewcommand\labelenumi{(\roman{enumi})}
\renewcommand\theenumi\labelenumi
\begin{document}
\bibliographystyle{plain} 

\title{Counting in generic lattices and higher rank actions}

\author{Michael Bj\"orklund}

\address{Department of Mathematics, Chalmers, Gothenburg, Sweden}
\email{micbjo@chalmers.se}

\author{Alexander Gorodnik}
\address{Institute f\"ur Mathematik, University of Z\"urich, Switzerland}
\email{alexander.gorodnik@math.uzh.ch}

\subjclass[2010]{Primary: 11K60, 11N45. Secondary: 37A25, 60F05}
\keywords{Counting problems, central limit theorems, exponential mixing of all orders}

\date{}

\begin{abstract}
We consider the problem of counting lattice points contained in 
domains in $\bR^d$ defined by products of linear forms and we 
show that the normalized discrepancies in these counting problems
satisfy non-degenerate Central Limit Theorems, provided that $d \geq 9$. We also  study more refined versions pertaining to "spiraling of approximations". Our techniques are dynamical in nature and exploit effective exponential mixing of all orders for actions of higher-rank abelian groups
on the space of unimodular lattices.
\end{abstract}

\maketitle


\section{Introduction}

Let $\Lambda$ be a lattice in $\bR^d$, and let $(\Omega_T)$ be an increasing family of Borel subsets of $\bR^d$ with finite volumes tending to infinity as $T\to\infty$. 
A fundamental problem in the Geometry of Numbers is to estimate 
the number of points in  $\Lambda$ which are contained in $\Omega_T$.
Under mild regularity conditions, one can usually show that 
$$
|\Lambda\cap  \Omega_T| = \frac{\Vol(\Omega_T)}{\Vol(\bR^d/\Lambda)}+o\Big(\Vol(\Omega_T)\Big)\quad\quad\hbox{as $T\to\infty$.}
$$ 
In this paper we study the corresponding \emph{discrepancy function} defined by
\begin{equation}
\label{def_RT}
\cD_{T}(\Lambda) := |\Lambda\cap \Omega_T | - \frac{\Vol(\Omega_T)}{\Vol(\bR^d/\Lambda)}.
\end{equation}
When the domain $\Omega_T$ is a $T$-dilation a region $\Omega \subset \bR^d$ with piecewise smooth boundary, one can easily prove that 
\begin{equation}
\label{eq:gen_n}
\cD_{T}(\Lambda)=O_{\Lambda}\Big(\Vol(\Omega_T)^{1-1/d}\Big),
\end{equation}
and this estimate is the \emph{best possible} in this generality. However, the estimate has been improved for certain particular classes of domains. A well-studied setting is when the domain $\Omega$ has non-vanishing curvature. In this case, Hlawka \cite{hl} has shown that 
\begin{equation}
\label{eq:hl1}
\cD_{T}(\Lambda)=O_{\Lambda}\Big(\Vol(\Omega_T)^{1-2/(d+1)}\Big)
\end{equation}
and
\begin{equation}
\label{eq:hl2}
\cD_{T}(\Lambda)=\Omega_{\Lambda}\Big(\Vol(\Omega_T)^{1-(d+1)/(2d)}\Big).
\end{equation}
These bounds have been subsequently improved by a number of people 
(see, for instance, \cite{i} for a survey).

\medskip

In this paper we shall be interested in asymptotic behaviour ($T \ra \infty$) of the discrepancy function $\cD_{T}(\Lambda)$ for "generic" lattices $\Lambda$. The following two questions naturally arise in this setting:
\begin{enumerate}
	\item[(i)] what is the asymptotic "generic" growth of $\cD_{T}(\Lambda)$? \vspace{0.1cm}
	\item[(ii)] do suitably normalized discrepancy functions converge in distribution?
\end{enumerate}
Concerning Question (i): it turns out that the estimate \eqref{eq:gen_n} can be improved for generic lattices.
The first striking result in this direction was established by W.~Schmidt~\cite{Sch60(2)}. He proved that for a \emph{every} increasing family of 
Borel sets $\Omega_T$ as above and almost every lattice $\Lambda$,
$$
\cD_{T}(\Lambda)=O_{\Lambda,\eps}\Big(\Vol(\Omega_T)^{1/2+\eps}\Big)\quad \hbox{for all $\eps>0$.}
$$
However, the exact asymptotic behavior of $\cD_{T}(\Lambda)$ for generic lattices
is still quite mysterious, and it turns out that the answer depends very sensitively on the shape of the domains. 
For instance, Hardy, Littlewood \cite{hl} and Khinchin \cite{kh} 
discovered that when $\Omega_T$ is a $T$-dilation 
of a generic compact polygon in $\bR^2$, then 
$$
\cD_{T}(\bZ^2)=O_{\eps}\Big(\big(\log \Vol(\Omega_T)\big)^{1+\eps}\Big)\quad \hbox{for all $\eps>0$.}
$$
This exhibits a striking difference with the estimates \eqref{eq:hl1}--\eqref{eq:hl2}
for strictly convex domains. Skriganov \cite{skr} established a far-reaching generalization of this estimate. He showed that when $\Omega_T$ is 
a dilation by a factor $T$ of a compact polyhedron in $\bR^d$, then for almost every unimodular lattice $\Lambda$,
$$
\cD_{T}(\Lambda)=O_{\Lambda,\eps}\Big(\big(\log \Vol(\Omega_T)\big)^{d-1+\eps}\Big)\quad \hbox{for all $\eps>0$.}
$$
It is not known whether the above bound is optimal.
Another well-studied example is the case when 
the domains $\Omega_T$ are the Euclidean balls in $\bR^d$.
In this case, it was shown by Kelmer \cite{kel} that 
for any exponentially growing sequence $T_i\to\infty$ and almost all lattices $\Lambda$,
$$
\cD_{T_i}(\Lambda)=O_{\Lambda,\eps}\Big(\Vol(\Omega_{T_i})^{1-(d+1)/(2d) +\eps}\Big)\quad \hbox{for all $\eps>0$.}
$$

\medskip

Concerning Question(ii) above: 
several results have been proved for certain particular families of lattices.
For instance, it was discovered by Beck that the distributions of suitably normalized
discrepancy functions are asymptotically Gaussian.
We refer to a survey \cite{beck1} and a monograph \cite{beck2} for 
a comprehensive exposition of these results. Beck
considered the domains
$$
\Omega_T:=\big\{(x,y)\in \bR^2\,:\,\, x^2-2y^2\in (a,b),\, 0<x< T,\, y>0  \big\}  
$$
and translated lattices $\Lambda_{\omega}:=\bZ^2+(\omega,0)$ with $0<\omega <1$
and showed that there exists an explicit $\sigma>0$ such that
\begin{equation}\label{eq:bek}
\hbox{Leb}\big(\big\{ \omega\in (0,1): \Vol(\Omega_T)^{-1/2} \mathcal{D}_T(\Lambda_\omega)<\xi\big\}\big)
\longrightarrow \frac{1}{\sigma \sqrt{2\pi}} \int_{-\infty}^\xi e^{-t^2/2\sigma^2} \, dt
\quad \textrm{as $T\to\infty$}.
\end{equation}
While this approach seems to work for domains defined by more general indefinite integral binary quadratic forms, it was not clear whether this result could hold in higher dimensions
since its proof was based on properties of continued fraction expansions for quadratic irrational. Furthermore, Beck points out that there are essential difficulties
in extending his work to higher dimensions related to the long-standing Littlewood Conjecture. 

Levin \cite{lev} investigated the discrepancy 
function of the family of lattices of the form
$$
\Lambda_{\ba}:=\hbox{diag}(a_1,\ldots,a_d)^{-1}\mathcal{O},\quad \ba=(a_1,\ldots,a_d)\in (0,1)^d,
$$
where $\mathcal{O}$ is a fixed lattice in $\bR^d$ arising from an order in a totally real number field. He showed that for the boxes $\Omega_{\underline{N}}:=[-N_1,N_1]\times \cdots \times  [-N_d,N_d]$, 
then suitably normalized  discrepancy
functions $\mathcal{D}_{\underline{N}}(\Lambda_{\ba})$ are asymptotically Gaussian as $N_1\cdots N_d\to \infty$, with $\ba\in (0,1)^d$ considered random.
Since the results \cite{beck1,beck2,lev} 
treat only very particular lattices arising from 
orders in number fields, one may wonder whether this behavior occurs 
for truly generic lattices. We will address this question in the present paper.

One should also mention the ground-breaking works of Dolgopyat, Fayad \cite{df1,df2}
(see also the survey \cite{df0}), generalizing Kersten \cite{k1},
about the discrepancy of distribution for toral translations.
Using our terminology, these results can be interpreted 
in terms of discrepancy functions for the family of lattices given by
$$
\Lambda_{\underline{u}}:=\big\{(x_1+ u_1y,\ldots, x_{d-1}+ u_{d-1}y, y):\, (x_1,\ldots,x_{d-1},y)\in \bZ^d\big\}\quad\hbox{with}\;\;  0\le u_1,\ldots,u_{d-1}<1.
$$
and certain families of domains $\Omega_{T}(\underline{\theta})$ depending on additional parameters
$\underline{\theta}$. It is shown in \cite{df1,df2} that the corresponding discrepancy for $|\Lambda_{\underline{u}}\cap \Omega_{T}(\underline{\theta})|$ after a suitable normalization 
converges in distribution as $T\to \infty$, with $(\underline{u},\underline{\theta})$ considered random.
It should be noted that the obtained limit distributions in \cite{df1,df2} are different from the Normal Law.
Further related results about distribution of 
Diophantine approximants were proved in \cite{DFV} and \cite{BG2}.

\subsection{Main results}

Let $L_1,\ldots, L_d:\bR^d\to \bR$ be linearly independent 
linear forms and $N_L(x):= L_1(x)\cdots L_d(x)$. 
For a bounded interval $I\subset \bR^+$ and $T>0$, we consider the domains
$$
\Omega_T(I):=\big\{x\in\bR^d:\, N_L(x)\in I\;\;\hbox{and}\;\; 0<L_1(x),\ldots,L_d(x)< T  \big\}.
$$
We write $X$ for the space 
of unimodular lattices in $\bR^d$ equipped with the unique $\SL_d(\bR)$-invariant probability measure $\mu$.
The following result provides an analogue of \eqref{eq:bek}
for $\mu$-generic unimodular lattices:
\vspace{0.2cm}
 
\begin{theorem}\label{main1}
Let $\cD_T$ denote the discrepancy function for $\Omega_T(I)$. If $d \geq 9$, then 
$$
\mu\Big(\big\{ \Lambda\in X:\, \Vol(\Omega_T)^{-1/2} \mathcal{D}_T(\Lambda)<\xi\big\}\Big)
\longrightarrow \frac{1}{\sigma(I) \sqrt{2\pi}} \int_{-\infty}^\xi e^{-t^2/2\sigma(I)^2} \, dt
\quad \textrm{as $T\to\infty$},
$$
for all $\xi \in \bR$, where
\[
\sigma(I)^2 := \frac{1}{\zeta(d)} \sum_{p,q=1}^\infty  \frac{\Leb\left(p^d I \cap q^d I\right)}{p^d q^d \Leb(I)}.
\]
\end{theorem}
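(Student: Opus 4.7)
The overall approach is dynamical. I would encode $|\Lambda\cap\Omega_T(I)|$ as an ergodic integral over the space of lattices $(X,\mu)$ under the diagonal subgroup $A=\{\operatorname{diag}(a_1,\ldots,a_d)\in\SL_d(\bR)\}$, and then extract the CLT from the effective exponential mixing of all orders for the $A$-action that is emphasized in the abstract.

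After an $\SL_d(\bR)$-change of coordinates I may assume $L_i(x)=x_i$. The positive orthant then admits the $A$-equivariant coordinate system $(a,r)\in A\times(0,\infty)\mapsto a\cdot(r^{1/d},\ldots,r^{1/d})$, and in these coordinates the conditions $0<x_i<T$ together with $N_L(x)=r$ cut out a simplex $\Delta(T,r)\subset A\cong\bR^{d-1}$ of linear size $\log(Tr^{-1/d})\asymp\log T$. Using this $A$-fibration and a choice of cross-section for the $A$-action on $\bR_+^d$, one rewrites, after a smooth truncation of $\mathbf{1}_{\Omega_T(I)}$,
\[
\cD_T(\Lambda)\;\approx\;\int_{\Delta_T}\bar F(a\Lambda)\,da,\qquad \bar F := F-\textstyle\int_X F\,d\mu,
\]
where $F$ is the Siegel transform of a fixed, compactly supported function on $\bR_+^d$ adapted to the slab $\{x:N_L(x)\in I\}$, and $\Delta_T\subset A$ is a bounded simplex of volume $\asymp\Vol(\Omega_T)\asymp(\log T)^{d-1}$. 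This converts the discrepancy into a centered ergodic integral of a fixed, mean-zero observable on $X$ over a growing domain in $A$.

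To establish that $S_T:=\Vol(\Omega_T)^{-1/2}\cD_T$ is asymptotically Gaussian I would use the method of moments. For each $k$,
\[
\int_X S_T^k\,d\mu \;=\;\Vol(\Omega_T)^{-k/2}\int_{\Delta_T^k}\int_X\prod_{i=1}^{k}\bar F(a_i\Lambda)\,d\mu(\Lambda)\,da_1\cdots da_k.
\]
Effective exponential mixing of all orders bounds the inner $k$-point correlation by a sum over pairings of the $a_i$'s, with remainders exponentially small in the minimum pairwise spacing. A standard combinatorial expansion then isolates only the matched pairings in the limit, giving $\bE[S_T^{2k}]\to(2k-1)!!\,\sigma(I)^{2k}$ and $\bE[S_T^{2k+1}]\to 0$; since the Gaussian is moment-determined, this yields convergence in distribution.

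To identify $\sigma(I)^2$ I would decompose $\Lambda\setminus\{0\}$ along primitive vectors: every $v\in\Lambda\setminus\{0\}$ is uniquely $v=pv_0$ with $v_0\in\Lambda_{\mathrm{prim}}$ and $p\in\bZ_{\geq1}$. Since $v\in\Omega_T(I)$ forces $N_L(v_0)\in p^{-d}I$ (similarly for $w=qv_0$), pairs $(v,w)=(pv_0,qv_0)$ sharing a common primitive vector produce the combinatorial factor $\Vol(p^dI\cap q^dI)/(p^dq^d\Vol(I))$ after rescaling, while the prefactor $1/\zeta(d)$ appears as the $\mu$-density of primitive vectors in $\Lambda$, giving exactly the formula in the statement. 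I expect the main obstacle to be technical: the Siegel transform lies in $L^p(X,\mu)$ only for $p<d$, due to polynomial tails in the cusp, so controlling high moments of $S_T$ uniformly in $T$ requires a careful trade-off between the smoothing error incurred in replacing $\mathbf{1}_{\Omega_T(I)}$ by a Sobolev approximant and the exponential decay furnished by mixing. Pushing this balance through for enough moments to pin down the Gaussian, against the $k$-fold integration over simplices of volume $(\log T)^{d-1}$, is, in my view, exactly what forces the dimension hypothesis $d\geq 9$, and is the delicate step of the argument.
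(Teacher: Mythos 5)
Your skeleton matches the paper's strategy (Siegel transform, tiling the domain along the diagonal group $A\cong\bR^{d-1}$, exponential multiple mixing, Rogers-type identification of $\sigma(I)^2$), but the decisive reduction fails as you state it: there is no \emph{fixed} compactly supported smooth observable $F$ for which $\cD_T\approx\int_{\Delta_T}\bar F(a\Lambda)\,da$ up to errors that are $o\big(\Vol(\Omega_T)^{1/2}\big)$. Smoothing the constraint $N_L(x)\in I$ (and the tile boundaries) at any fixed scale $\eps$ creates a difference between $\chi_{\Omega_T}$ and your approximant whose $L^1$-norm, and whose Rogers variance, is of order $\eps\,\Vol(\Omega_T)$ --- the ``thin'' direction $s=\log N_L$ is bounded while the $A$-directions have length $\log T$, so a fixed-scale mollification costs a constant proportion of the volume, not a boundary term; it also shifts the limiting variance by $O(\eps)$. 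One is therefore forced to take $\eps=\eps_T\to0$ faster than $\Vol(\Omega_T)^{-1/2}$, so the observables are unavoidably $T$-dependent with $C^q$-norms blowing up like powers of $\Vol(\Omega_T)$ (Lemma~\ref{l:norms}); the whole axiomatic machinery of Section~\ref{sec:general} (conditions (I.a)--(I.c), (II.a)--(II.c), Theorem~\ref{thm_criterion}) exists precisely to run a CLT with such exploding observables, and the resulting inequality $d>4(1+\theta_0)$ with $\theta_0>1$ is where $d\geq 9$ actually comes from --- not from a generic smoothing-versus-mixing trade-off for a fixed observable. (The paper also avoids the $O\big((\log T)^{d-2}\big)$ boundary error of a continuous $A$-average by tessellating the simplex exactly into integer translates of the two tiles $\cS_1,\cS_2$.)

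The moment scheme has a second gap: applying the method of moments directly to $S_T$ is ill-posed, since Siegel transforms lie in $L^p(\mu)$ only for $p<d$, so $\int_X S_T^k\,d\mu$ is infinite for $k\geq d$ and your displayed $k$-fold correlation integrals diverge for the very moments you need. Moreover, multiple mixing does not ``bound the $k$-point correlation by a sum over pairings'': it only factorizes correlations when the time points are pairwise far apart, and it says nothing about clustered configurations. The paper instead (i) truncates the Siegel transforms by the cutoffs $\eta_{L_T}$, (ii) works with cumulants (Frechet--Shohat) rather than moments, splitting $r$-tuples in $A$ into separated tuples, handled by the quantitative multiple mixing of \cite{BEG} following \cite{BG1}, and clustered tuples, and (iii) controls the clustered tuples --- where mixing gives nothing --- via the ``bounded on average'' condition (II.c), H\"older's inequality and $\int_X\alpha^p\,d\mu<\infty$ for $p<d$ (Proposition~\ref{prop_new}); this is the main new ingredient and nothing in your sketch substitutes for it. Your identification of $\sigma(I)^2$ through primitive vectors is fine in spirit: it is Rogers' mean-square theorem, which is exactly how the paper computes the variance (Corollary~\ref{cor:l2}), the variance convergence being a separately verified hypothesis of the abstract criterion rather than an output of mixing.
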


\vspace{0.3cm}

Athreya, Ghosh and Tseng \cite{ATG} studied the related 
problem of "spiraling" of Diophantine approximants
which involves  counting the lattice points 
in the domains
$$
 \Big\{ (x,y) \in \bR^{d-1} \times \bR :\,\, \|x\|\cdot  |y| \in I, \enskip \frac{x}{\|x\|} \in B, \enskip 0<\|x\|< T, \enskip 0 < y < T \Big\},
$$
defined for an interval $I\subset \bR^+$ and a Borel subset $B\subset S^{d-1}$. 
Our method allows to analyze the distribution of the error term for this counting problem. \\

More generally, for $k \geq 2$ and  positive integers $d_1,\ldots,d_k$,
we set 
\[
\bd = (d_1,\ldots,d_k) \qand d = d_1 + \ldots + d_k,
\]
and define
$\bS_{\bd} := \prod_{j=1}^k S^{d_j-1},$
where $S^{d_j-1}$ denotes the unit sphere in $\bR^{d_j}$, endowed with the standard Euclidean inner product, with the 
convention that $S^0 = \{-1,1\}$. The corresponding norm on $\bR^{d_j}$ will be denoted by $\|\cdot\|$, the 
spherical measure on $S^{d_j-1}$ will be denoted by $\kappa_{j}$, and we set 
\begin{equation}\label{eq:k}
\kd := \kappa_{1} \otimes \cdots \otimes \kappa_{k}.
\end{equation}
Let us also fix rotation-invariant smooth metrics on each $S^{d_j-1}$ with $d_j \geq 2$. If $d_j = 1$, we endow $S^0 = \{-1,1\}$ with the
discrete distance. If $B \subset \bS_{\bd}$ is a Borel set and $\eps > 0$, we denote by $B_\eps$ the $\eps$-thickening of $B$ with 
respect to the products of the chosen metrics. 
We say that a Borel set $B \subset \bS_{\bd}$ has a \emph{smooth boundary} if 
\[
\kd(B_\eps) - \kd(B) \ll \eps,  \quad \textrm{for all small enough $\eps > 0$},
\]
where the implicit constants are independent of $\eps$. \\

Let now $L_j:\bR^d\to \bR^{d_j}$, $j=1,\ldots,k$, be linear maps such that $(L_1,\ldots,L_k)$ is a bijection of $\bR^d$.
We define 
\begin{equation}
\label{defNdThetad}
N_L(z): = \prod_{j=1}^k \|L_j(z)\|^{d_j} \qand \td_L(z): = \Big( \frac{L_1(z)}{\|L_1(z)\|}, \ldots, \frac{L_k(z)}{\|L_k(z)\|} \Big).
\end{equation}
Given a bounded interval $I \subset (0,\infty)$, a Borel set $B \subset \bS_{\bd}$ and $T>0$, we consider the domains
\begin{equation}
\label{defOT_0}
 \Omega_T(I,B) := \big\{ z \in \bR^{d} \, :\, \, N_L(z) \in I, \enskip \td_L(z) \in B \qen 0<\|L_1(z)\|,\ldots, \|L_1(z)\| <T \big\}.
\end{equation}
Our main result is the following:
\vspace{0.3cm}

\begin{theorem}\label{main}
	When $k\ge 2$ and $d\ge 9$, the discrepancy functions for the sets $\Omega_T(I,B)$ satisfy,	
	$$
	\mu\Big(\big\{ \Lambda\in X:\, \Vol(\Omega_T)^{-1/2} \mathcal{D}_T(\Lambda)<\xi\big\}\Big)
	\longrightarrow \frac{1}{\sigma(I,B) \sqrt{2\pi}} \int_{-\infty}^\xi e^{-t^2/2\sigma(I,B)^2} \, dt
	\quad \textrm{as $T\to\infty$},
	$$
	 for all $\xi\in\bR$, where
\[
\sigma(I,B)^2 := 
\frac{1}{\zeta(d)} \left(\sum_{p,q=1}^\infty  \frac{\Leb\left(p^d I \cap q^d I\right)}{p^d q^d \Leb(I)}\right) \Big( 1 +   \frac{\kd(B \cap - B)}{\kd(B)} \Big).
\]
\end{theorem}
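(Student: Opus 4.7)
The plan is to convert the lattice-point counting problem into an equidistribution problem for a higher-rank abelian flow on $X = \SL_d(\bR)/\SL_d(\bZ)$, and then to invoke a central limit theorem for such flows built on the effective multiple mixing established earlier in the paper. Since $\mu$ is $\SL_d(\bR)$-invariant, precomposing with $L^{-1}$ reduces the problem to $L = \mathrm{id}$, in which case $\bR^d = \bR^{d_1}\oplus\cdots\oplus\bR^{d_k}$ and each $L_j$ is a coordinate projection. Let $A \subset \SL_d(\bR)$ denote the connected abelian subgroup
\[
A = \bigl\{\Diag(a_1 I_{d_1},\ldots,a_k I_{d_k}) : a_j > 0,\ \prod\nolimits_j a_j^{d_j} = 1\bigr\} \cong \bR^{k-1},
\]
so that both $N_L$ and $\td_L$ are $A$-invariant and only the norm constraints $\|L_j(z)\| < T$ vary under the $A$-action. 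Fixing $0 < c_0 < c_1$, I set $\Omega_0 := \{z : N_L(z) \in I,\, \td_L(z) \in B,\, c_0 < \|L_j(z)\| < c_1 \text{ for all } j\}$ together with the Siegel transform $f(\Lambda) := |\Lambda \cap \Omega_0|$. In logarithmic coordinates on $A$, the domain $D_T \subset A$ defined by $a_j \le T/c_1$ has volume of order $(\log T)^{k-1}$, and an unfolding argument with a smooth partition of unity yields
\[
|\Lambda \cap \Omega_T(I,B)| = C_0^{-1}\int_{D_T} f(a\Lambda)\, da + \mathrm{Error}_T(\Lambda)
\]
with a combinatorial constant $C_0$ and an error whose second moment is negligible after truncating $f$ away from the cusp.

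By the Siegel mean-value formula, $\int_X f\, d\mu = \Vol(\Omega_0)$, and integrating over $D_T$ reproduces $\Vol(\Omega_T(I,B))$ as the leading term. Writing $\widetilde f := f - \int_X f\, d\mu$, the centered discrepancy then equals $C_0^{-1}\int_{D_T} \widetilde f(a\Lambda)\, da$ up to the negligible error. For the asymptotic variance I plan to invoke Rogers's formula, which expresses $\int_X \widetilde f(\Lambda)\widetilde f(g\Lambda)\, d\mu$ as an explicit double sum indexed by pairs $(p,q)$ of positive integers encoding the primitive-vector structure of $\bZ^d$. Integrating this identity over $g = a \in A$ and evaluating the resulting Euclidean integrals against the radial variables $\|L_j(\cdot)\|$ in the definition of $\Omega_0$ produces
\[
\sigma(I,B)^2 = \frac{1}{\zeta(d)}\Bigl(\sum_{p,q\ge 1} \frac{\Leb(p^d I \cap q^d I)}{p^d q^d \Leb(I)}\Bigr)\Bigl(1 + \frac{\kd(B \cap -B)}{\kd(B)}\Bigr),
\]
where the $1/\zeta(d)$-prefactor and the $(p,q)$-sum arise from the primitive decomposition in $\bZ^d$, while the factor $1 + \kd(B\cap -B)/\kd(B)$ tracks the antipodal pairs $\{v,-v\}\subset\Lambda$ that enter $\Omega_T$ simultaneously (namely, exactly when $\td_L(v)\in B \cap -B$).

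To promote convergence of the variance to a full Gaussian limit, I use the method of cumulants. The $n$-th cumulant of $\Vol(\Omega_T)^{-1/2}\cD_T$ equals, up to $o(1)$,
\[
\kappa_n(T) = \Vol(\Omega_T)^{-n/2}\int_{D_T^n} \cum_n\bigl(\widetilde f(a_1\Lambda),\ldots,\widetilde f(a_n\Lambda)\bigr)\, da_1\cdots da_n,
\]
and the goal is to show $\kappa_n(T) \to 0$ for $n \ge 3$. Here the effective exponential multiple mixing for the $A$-action on $X$, applied to smooth truncations of $f$, forces the cumulant integrand to decay exponentially in the minimal pairwise distance among $a_1,\ldots,a_n$, so that contributing configurations are confined to a tube of logarithmic thickness around the full diagonal in $D_T^n$. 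A standard spreading count then yields $\int_{D_T^n}|\cum_n| \lesssim (\log T)^{(k-1)(n-1) + o(1)}$, which is of strictly smaller order than $\Vol(\Omega_T)^{n/2}\asymp(\log T)^{(k-1)n/2}$ for $n \ge 3$. Combined with the variance computation, this gives the Gaussian limit with variance $\sigma(I,B)^2$.

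The main obstacle is that $f$ is unbounded on $X$ because of short vectors near the cusp, so the effective multiple mixing, which requires observables of controlled Sobolev norm, cannot be applied to $f$ directly. I plan to truncate $f$ using the Eskin-Margulis and Kleinbock-Margulis nondivergence estimates for $A$-orbits, which quantify the rate at which $A$-trajectories can escape to infinity, and then to control carefully the tail contribution to each cumulant and to the $L^2$-error $\mathrm{Error}_T$. The assumption $d \ge 9$ is expected to enter precisely here: it should be the dimensional threshold beyond which the Siegel transforms and their $n$-fold products lie in the function spaces for which the higher-rank mixing statement furnishes decay rates strong enough to close both the second-moment and the higher-cumulant estimates uniformly in $T$.
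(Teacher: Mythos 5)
Your overall strategy is the one the paper follows: reduce to coordinate projections, realize the counting function as an average of Siegel transforms over a region of the diagonal group $A\cong\bR^{k-1}$ of volume $\asymp(\log T)^{k-1}$, get the variance from Rogers' formula, and kill the cumulants of order $r\ge 3$ by exponential multiple mixing for the $A$-action away from the diagonal plus a near-diagonal count, after truncating the Siegel transform near the cusp. (The paper implements the average as an exact \emph{discrete} tiling with $T$- and $s$-dependent tiles together with an extra smoothing in the $s$-variable, rather than your continuous unfolding of a fixed $\Omega_0$; your variant is reasonable in principle, but already the reduction needs care: with $D_T=\{a_j\le T/c_1\}$ the unfolded union of $a^{-1}\Omega_0$ corresponds to lower bounds $\|L_j(z)\|\gg 1/T$ rather than $\|L_j(z)\|<T$, the fiber volume $\int_A\chi_{\Omega_0}(az)\,da$ depends on $s=\log N_L(z)$, so a single constant $C_0$ produces an order-one multiplicative error and a wrong variance constant unless $\Omega_0$ is chosen so that this fiber volume is $s$-independent, and the boundary layer of the unfolding has Lebesgue measure $\asymp(\log T)^{k-2}$, which for $k\ge 3$ is \emph{not} $o(\Vol(\Omega_T)^{1/2})$ — one must check that the deterministic parts cancel and only the centered error, of size $O((\log T)^{(k-2)/2})$ in $L^2(\mu)$, survives.)

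The genuine gaps are in the cumulant step, which is where all the work in the paper lies. First, your stated bound is arithmetically inconsistent: $(\log T)^{(k-1)(n-1)+o(1)}$ is \emph{larger}, not smaller, than $\Vol(\Omega_T)^{n/2}\asymp(\log T)^{(k-1)n/2}$ for $n\ge 3$ (e.g. $2(k-1)>\tfrac32(k-1)$), so the comparison you draw is false as written; the correct count for a tube of thickness $\gamma$ around the diagonal in $D_T^n$ is $\ll(\log T)^{k-1}\gamma^{(k-1)(n-1)}$, which is $(\log T)^{(k-1)+o(1)}$ only for $\gamma$ of poly-logarithmic size. Second, and more substantively, even the corrected count is useless until you bound the integrand on the tube: near the diagonal mixing gives nothing, and the integrand is a product of up to $n$ truncated Siegel transforms, whose $\mu$-expectation blows up like a power of the truncation height as soon as a block of the moment-partition has size $\ge d$, because $\alpha\in L^p(\mu)$ only for $p<d$. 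The paper's clustered-tuple analysis (generalized H\"older giving a factor $L_T^{r-d+\eps}$, the ``bounded on average'' surrogate functions, and Sobolev bounds $S_q(\widehat f\,\eta_L)\ll L^{d+1}\|f\|_{C^q}$ feeding the mixing estimate) is exactly this missing step, and the threshold $d\ge 9$ is not an expectation but the outcome of balancing the truncation exponent $\rho$ (with $L_T=V_T^\rho$) against the smoothing scale: one needs $\rho>(1+2\theta_0)/(d-2-2\eps)$ for the truncation error and $\rho(r-d+\eps)<r/2-1$ for all $r$, hence $d>4(1+\theta_0)$ with $\theta_0>1$ coming from the sup-norm growth of the smoothed data. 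Your proposal defers all of this (``expected to enter precisely here''), and the tool you invoke — orbitwise nondivergence of $A$-trajectories \`a la Eskin--Margulis/Kleinbock--Margulis — is not the relevant one; what is needed are measure estimates for the cusp and smooth cutoffs with polynomially controlled Sobolev norms compatible with the multiple-mixing input. Until the near-diagonal bound and this truncation balancing are actually carried out (and the unfolding normalization fixed), the proof, and in particular the role of $d\ge 9$, is not established.
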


\vspace{0.2cm}

Theorems \ref{main1} and \ref{main} have been announced in \cite{BG0} for $d\ge 4$.
However, it turned out that the technical part of our argument works only for $d\ge 9$. 

\medskip

In the next section,
we summarize the main steps of the proof of Theorem \ref{main}.
Our argument can be roughly divided into two parts that involve:
\begin{itemize}
	\item a construction 
	of a suitable approximation for the counting function (Section \ref{sec:smoothapprox}), \vspace{0.1cm}
	\item analysis of such approximations (Section \ref{sec:general}). 
\end{itemize}  

\subsection*{Acknowledgements}

The authors are grateful for the hospitality shown by the departments of Mathematics at Bristol University, Chalmers University
and University of Z\"urich, where most of this work was done. The first author also acknowledges financial support from 
GoCas Young Excellence grant 11423310 until December 2018, and from the grant VR M Björklund 20-23 from the Swedish Research Council from January 2020.
The second author was supported by SNF grant 200021--182089.

\section{Outline of the proof}\label{sec:outline}

Our argument will involve analysis on the space $X$ of unimodular lattices in $\bR^d$, which can be considered as a homogeneous space
$X\simeq \hbox{SL}_d(\bR)/\hbox{SL}_d(\bZ)$.
The space $X$ supports a unique $\hbox{SL}_d(\bR)$-invariant probability measure, which we shall denote by $\mu$ throughout the paper. \\ 
Given  a bounded Borel measurable function 
$f : \bR^d \ra \bR$  with bounded support, its \emph{Siegel transform}
	$\widehat{f} : \xd \ra \bR$  is defined by
	\[
	\widehat{f}(\Lambda) := \sum_{z \in \Lambda \setminus \{0\}} f(z), \quad \textrm{for $\Lambda \in \xd$}.
	\]
According to Siegel's Mean Value Theorem \cite{Sie},
if $f$ is Riemann integrable, then
\begin{equation}
\label{eq:siegel}
\int_{\xd} \widehat{f} \, d\md = \int_{\bR^d} f(z) \, dz,
\end{equation}
where we normalise the Lebesgue measure $dz$ on $\bR^d$ so that the unit cube is assigned volume one. \\

Suppose that $\Omega_T$ is a bounded Borel set 
in $\bR^d$, which do not contain the origin. Then, with the above notations, 
\begin{equation}
\label{eq:siegel2}
|\Omega_T \cap \Lambda| = \widehat{\chi}_{\Omega_T}(\Lambda) \qand \Vol(\Omega_T) = \int_{\xd} \widehat{\chi}_{\Omega_T} \, d\md,
\end{equation}
so that 
$$
\mathcal{D}_T(\Lambda)=\widehat{\chi}_{\Omega_T}(\Lambda)-\int_{\xd} \widehat{\chi}_{\Omega_T} \, d\md.
$$
In the setting of Theorem \ref{main}, these formulas can rewritten further. 
In what follows, we retain the notation used 
there. In particular, we have fixed $k \geq 2$ and $d \geq 3$, as well as 
a $k$-tuple $\bd = (d_1,\ldots,d_k)$ of positive integers with $d = d_1 + \ldots + d_k$. We have chosen a bounded interval 
$I \subset (0,\infty)$ and a Borel set $B \subset \bS_{\bd}$ with a smooth boundary. We 
denote by $\Omega_T = \Omega_T(I,B)$ the sets defined in \eqref{defOT_0}.
For simplicity, we suppose that the maps $L_j$ are the standard coordinate projections.
Then the domains $\Omega_T$ can be conveniently foliated
by the level sets 
\[
\mathcal{L}_{s,\xi}:=\big\{z \in \bR^d \, : \,\, N_L(z) = s \qen \td_L(z) = \xi\big\}, \quad \textrm{for $s \in I$ and $\xi \in B$},
\] 
which are invariant under the subgroup $\abd < \SL_d(\bR)$ of diagonal matrices of the form
\begin{equation}
\label{defb}
\aad(u) := \Diag\Big(e^{u_1}I_{d_1},e^{u_2}I_{d_2},\ldots,e^{u_{k-1}}I_{d_{k-1}},e^{-\frac{1}{d_k}\sum_{j=1}^{k-1} d_j u_j}I_{d_k} \Big), \quad 
\textrm{for $u \in \bR^{k-1}$}.
\end{equation}
We note that $\abd\simeq \bR^{k-1}$ since 
$$
\aad(u)\aad(v) = \aad(u+v)\quad\hbox{ for 
	all $u,v \in \bR^{k-1}$.}
$$
The initial idea of our approach is that the level sets 
$\mathcal{L}_{s,\xi}$
can be tessellated, using the action of a discrete subgroup of $\abd$ on $\bR^d$. Unfortunately, the domains $\Omega_T$ themselves do not possess such simple tillings. However, it turns out that each of the intersections $\Omega_T\cap \mathcal{L}_{s,\xi}$ has a tilling where tiles and 
the discrete subgroup depends on the parameters $s$ and $T$ (but not on the parameter $\xi$). 
We will show that the indicator functions $\chi_{\Omega_T}$ can be approximated by suitable integrals of 
varying functional averages. These ``functional tilings'' stem from the above tilings for different values of $s$ and $\xi$ and are constructed 
using the following data:
\begin{itemize}
	\item a collection of finite measure spaces $(Y_{T,i},\kappa_{T,i})$ indexed by $T>0$ and $i$ in a finite set $\mathcal{I}$, \vspace{0.1cm}
	\item a collection of bounded Borel functions $f_{T,i}:\bR^d\times Y_{T,i}\to [0,\infty)$ with $T>0$ and $i\in\mathcal{I}$, \vspace{0.1cm}
	\item a collection of finite subsets $Q(y_i)$ of $A$ with $y_i\in Y_{T,i}$.
\end{itemize}
The corresponding ``functional tilling'' is given by
\begin{equation}\label{FT0}
F_T(z) := \sum_{i\in\mathcal{I}} \int_{Y_{T,i}} \Big( \sum_{a \in Q_{T,i}(y_i)} f_{T,i}(az,y_i) \Big) \, d\kappa_{T,i}(y_i), \quad \textrm{for $z \in \bR^d$}.
\end{equation}
We shall show that for a suitable choice of the data, $F_T$ provides 
an approximation for the characteristic function $\chi_{\Omega_T}$
in the  sense that
\begin{equation*}
\big\|\chi_{\Omega_T} - F_T\big\|_1 = o\big(\Vol(\Omega_T)^{1/2}\big)
\quad \hbox{as $T\to\infty$.}
\end{equation*}
Assuming this, we can then write
\begin{equation*}
\frac{\widehat{\chi}_{\Omega_T} - \hbox{vol}(\Omega_T)}{\Vol(\Omega_T)^{1/2}} 
= 
\frac{\widehat{\chi}_{\Omega_T} - \widehat{F}_T}{\Vol(\Omega_T)^{1/2}} 
+
\frac{\widehat{F}_T - \int_{\xd} \widehat{F}_T \, d\md}{\Vol(\Omega_T)^{1/2}}
+
\frac{\int_{\xd} \big( \widehat{F}_T - \widehat{\chi}_{\Omega_T} \big) \, d\md}{\Vol(\Omega_T)^{1/2}},
\end{equation*}
where the first and third term on the right hand side tend to zero in the $L^1(\md)$-norm. Thus, the distributional limit of $\cD_T(\Lambda)$ is the same as 
the distributional limit of the sequence of functions
\[
\Upsilon_T(\Lambda):= \hbox{Vol}(\Omega_T)^{-1/2}\left(\widehat{F}_T(\Lambda) - \int_{\xd} \widehat{F}_T \, d\md\right).
\]
The significance of this observation is that 
averages $F_T$  can be investigated using homogeneous dynamics techniques. \\

Since the averages of the form also arise in other arithmetic problems, 
we will analyze their behavior in an abstract axiomatic setting (cf. assumptions (I.a)--(I.c) and (II.a)--(II.c) below).
This analysis will be carried out in Section \ref{sec:general}.
Our main result here is Theorem \ref{thm_criterion}. Notably, it shows that 
when certain basic norm estimates for functions $f_{T,i}$ hold,
the distributional convergence of $\Upsilon_T(\Lambda)$ hold
provided that only the variance $\|\Upsilon_T\|_{L^2(X)}$ converges.
Next, in Section \ref{sec:smoothapprox} we construct an approximation 
for $\chi_{\Omega_T}$ of the form \eqref{FT0} satisfying our assumptions 
(I.a)--(I.c) and (II.a)--(II.c). Once such an approximation
is available, our main result will be a corollary of 
Theorem \ref{thm_criterion}.

\section{Analysis of general functional tillings} \label{sec:general}

In this section we consider a family of functions $F_T$ on $\bR^d$ 
defined by a ``functional tilling'' as in \eqref{FT0}.
Our goal is to analyze the asymptotic behavior of the sums $\widehat{F}_T(\Lambda)=\sum_{z\in \Lambda\backslash \{0\}} F_T(z)$ for lattices $\Lambda$ in $\bR^d$. 
We will pose several assumptions on the objects defining $F_T$ and then in the next section demonstrate that the developed framework applies to our setting.
We have opted for this axiomatic approach because it could be useful for other counting problems, and it makes easier to follow the details of quite technical approximations arguments. Our main result here is Theorem \ref{thm_criterion},
which establishes the Central Limit Theorem for 
$(\widehat{F}_T)$, with respect to the measure $\mu$.

\subsection{Functional averages and their truncations}
\label{subsec:data}

Let $\mathcal{I}$ be a finite set. For $T>0$ and $i\in\mathcal{I}$, we consider:
\begin{enumerate}
\item[(I.a)] finite measure spaces $(Y_{T,i},\kappa_{T,i})$ satisfying $\sup_{T,i} \kappa_{T,i}(Y_{T,i}) < \infty$, \vspace{0.2cm}
\item[(I.b)] bounded Borel functions $f_{T,i} :\bR^d \times Y_{T,i} \ra [0,\infty)$ such that for $y_i \in Y_{T,i}$, the map $x \mapsto f_{T,i}(x,y_i)$
is smooth, and supported in a compact set $\cK \subset \bR^d$, independent of $T$, $i$, and $y_i$, \vspace{0.2cm}
\item[(I.c)] a set-valued map $y_i \mapsto Q_{T,i}(y_i)$ from $Y_{T,i}$ into the set of finite subsets of $\abd$ such that
$\sup_{i,y_i} |Q_{T,i}(y_i)|\ll V_T$ with a parameter $V_T$ satisfying $V_T\to \infty$ as $T\to\infty$.
\end{enumerate}
\vspace{0.2cm}

For $f \in C^\infty_c(\bR^d)$, let $\partial_k f$ denote the partial derivative of $f$ with 
respect to the $k$-th coordinate for $k=1,\ldots,d$. If $\beta = (\beta_1,\ldots,\beta_d)$ is a multi-index, we set
$\partial_\beta f = \partial_1^{\beta_1} \cdots \partial_d^{\beta_d}f$, and define
\begin{equation}
\label{defCp}
\|f\|_{C^p} = \max_{|\beta| \leq p} \|\partial_\beta f\|_{\infty}, \quad \textrm{for $p \geq 1$},
\end{equation}
where $|\beta| = \beta_1 + \ldots + \beta_d$. \\

\noindent We use the notations
\begin{align}\label{defM0_0}
M_{T} &:= \max_{i\in\mathcal{I}} \int_{Y_{T,i}} \big\|f_{T,i}(\cdot,y_i)\big\|_{\infty} \, d\kappa_{T,i}(y_i), \\
\label{defM0}
M_{T,q} &:= \max_{i\in\mathcal{I}} \sup_{y_i \in Y_{T,i}} \big\|f_{T,i}(\cdot,y_i)\big\|_{C^q}.
\end{align}
Given the data in (I.a)--(I.c), we consider the family of functions given by
\begin{equation}
\label{FT0_1}
F_T(z) := \sum_{i\in\mathcal{I}} \int_{Y_{T,i}} \Big( \sum_{a \in Q_{T,i}(y_i)} f_{T,i}(az,y_i) \Big) \, d\kappa_{T,i}(y_i), \quad\hbox{ for $z\in\mathbb{R}^d$,}
\end{equation}
and their Siegel transforms
\begin{equation}
\label{FT0_2}
\widehat{F}_T(\Lambda) = \sum_{i\in\mathcal{I}} \int_{Y_{T,i}} \Big( \sum_{a \in Q_{T,i}(y_i)} \widehat{f}_{T,i}(a\Lambda,y_i) \Big) \, d\kappa_{T,i}(y_i), \quad\hbox{ for $\Lambda\in X$.}
\end{equation}
Our goal is to show that under suitable assumptions the functions
\[
\Upsilon_T(\Lambda):= V_T^{-1/2}\left(\widehat{F}_T(\Lambda) - \int_{\xd} \widehat{F}_T \, d\md\right)
\]
converge in distribution. One of the difficulties that arises here is that
Siegel transforms (even for  bounded Borel functions with bounded support) are \emph{not} bounded. Nonetheless, they are typically only large
on sets of very small $\md$-measure and belong to $L^p(X)$ for $p<d$. 
Here and latter in the paper we always assume that $d\ge 3$ so that the Siegel transforms are $L^2$-integrable.
This makes it possible to efficiently approximate a Siegel transform by bounded functions on $\xd$ whose $L^p$-distance from the original Siegel transform is small.   
To make this approximation precise, we shall use a family of compactly supported cutoff function $\eta_L : \xd \ra [0,1]$ with $L>0$, constructed in \cite[Lemma~4.10]{BG2} 
such that for every compact
set $K \subset \bR^d$ and $f \in C(K)$, we have 
\begin{equation}\label{eq:eta1}
\left\|\widehat{f} \, \eta_L\right\|_{L^\infty(X)} \ll_K L \|f\|_{\infty}.
\end{equation}
Furthermore, for every $\eps>0$, 
\begin{equation}\label{eq:eta2}
\left\|\widehat{f} \, (1-\eta_L)\right\|_{L^1(X)} \ll_{K,\eps} L^{-d+1+\eps} \|f\|_{\infty} \qand \left\|\widehat{f} \, (1-\eta_L)\right\|_{L^2(X)} \ll_{K,\eps} L^{-d/2+1+\eps} \|f\|_{\infty},
\end{equation}
where the implicit constants are independent of $L$. \\

We introduce a parameter $L_T\to \infty$, which will be specified later, 
and introduce the functions
$\varphi_{T,i} : \xd \times Y_{T,i} \ra [0,\infty)$ defined by
\[
\varphi_{T,i}(\Lambda,y_i) := \widehat{f}_{T,i}(\Lambda,y_i) \eta_{L_T}(\Lambda), \quad \textrm{for $\Lambda \in \xd$ and $y_i \in Y_{T,i}$},
\]
which provide compactly supported truncations of the functions $\widehat{f}_{T,i}(\cdot,y_i)$.
We then consider
\[
\Phi_{T}(\Lambda) := \sum_{i\in \cI} \int_{Y_{T,i}} \Big( \sum_{a \in Q_{T,i}(y_i)} \varphi_{T,i}(a\Lambda,y_i) \Big) \, d\kappa_{T,i}(y_i), \quad \textrm{for $\Lambda \in \xd$}.
\]
The following lemma shows that this function approximates the Siegel transform $\widehat{F}_T$ if the parameter $L_T$ grows fast enough.

\begin{lemma}
\label{lemma_trunc}
If for some $\eps>0$,
\begin{equation}\label{eq:cond1}
L_T^{-d/2+1+\eps}\, V_T^{1/2} \, M_{T}\to 0\quad \textrm{as $T \ra \infty$},
\end{equation}
then $$
\left\|\widehat{F}_T - \Phi_{T}\right\|_{L^2(X)} = o\left(V_T^{1/2}\right)
\quad \textrm{as $T \ra \infty$}.
$$
Similarly, if 
\begin{equation}\label{eq:cond1_1}
L_T^{-d+1+\eps}\, V_T^{1/2} \, M_{T} \to 0 \quad \textrm{as $T \ra \infty$},
\end{equation}
then $$
\left\|\widehat{F}_T - \Phi_{T}\right\|_{L^1(X)} = o\left(V_T^{1/2}\right)
\quad \textrm{as $T \ra \infty$}.
$$
\end{lemma}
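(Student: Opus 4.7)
The plan is to bound $\|\widehat{F}_T - \Phi_T\|_{L^p(X)}$ by peeling off the structure of the functional average term by term, reducing to the pointwise bounds \eqref{eq:eta2} on individual Siegel transforms. First I would write
\[
\widehat{F}_T(\Lambda) - \Phi_T(\Lambda) = \sum_{i\in\cI} \int_{Y_{T,i}} \Big( \sum_{a \in Q_{T,i}(y_i)} \widehat{f}_{T,i}(a\Lambda,y_i)\bigl(1-\eta_{L_T}(a\Lambda)\bigr) \Big)\, d\kappa_{T,i}(y_i),
\]
which follows directly from the definitions of $\widehat{F}_T$, $\Phi_T$, and $\varphi_{T,i}$.

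Next I would apply Minkowski's integral inequality (in the form $\|\int g(\cdot,y)\,d\nu(y)\|_{L^p}\le \int\|g(\cdot,y)\|_{L^p}\,d\nu(y)$) combined with the ordinary triangle inequality for the finite sums over $\cI$ and $Q_{T,i}(y_i)$. This gives
\[
\|\widehat{F}_T - \Phi_T\|_{L^p(X)} \le \sum_{i\in\cI} \int_{Y_{T,i}} \sum_{a \in Q_{T,i}(y_i)} \bigl\|\widehat{f}_{T,i}(a\cdot,y_i)(1-\eta_{L_T}(a\cdot))\bigr\|_{L^p(X)} \, d\kappa_{T,i}(y_i).
\]
Here the key simplification is that $a \in \abd \subset \SL_d(\bR)$ acts on $X$ preserving $\mu$, so by change of variables
\[
\bigl\|\widehat{f}_{T,i}(a\cdot,y_i)(1-\eta_{L_T}(a\cdot))\bigr\|_{L^p(X)} = \bigl\|\widehat{f}_{T,i}(\cdot,y_i)(1-\eta_{L_T})\bigr\|_{L^p(X)},
\]
which eliminates all dependence on $a$ from the summand.

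Then I would invoke the cutoff estimates \eqref{eq:eta2}, using that each $f_{T,i}(\cdot,y_i)$ is supported in the fixed compact set $\cK$ from assumption (I.b). For $p=2$ this gives $\|\widehat{f}_{T,i}(\cdot,y_i)(1-\eta_{L_T})\|_{L^2(X)} \ll_{\cK,\eps} L_T^{-d/2+1+\eps} \|f_{T,i}(\cdot,y_i)\|_\infty$, and analogously for $p=1$ with the exponent $-d+1+\eps$. Inserting and using the cardinality bound from (I.c) to pull out $\sup_{i,y_i}|Q_{T,i}(y_i)| \ll V_T$, along with the definition \eqref{defM0_0} of $M_T$ and finiteness of $\cI$, yields
\[
\|\widehat{F}_T - \Phi_T\|_{L^2(X)} \ll L_T^{-d/2+1+\eps}\, V_T\, M_T \qand \|\widehat{F}_T - \Phi_T\|_{L^1(X)} \ll L_T^{-d+1+\eps}\, V_T\, M_T.
\]
Dividing by $V_T^{1/2}$ and invoking \eqref{eq:cond1}, respectively \eqref{eq:cond1_1}, concludes the proof.

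I do not anticipate any serious obstacle: the argument is a straightforward application of Minkowski's inequality plus measure-invariance, which is why the axiomatic setup was arranged so that $Q_{T,i}(y_i)\subset A\subset\SL_d(\bR)$. The only point requiring mild care is ensuring that the compact support $\cK$ of $f_{T,i}(\cdot,y_i)$ is uniform in $T,i,y_i$, so that the implicit constants in \eqref{eq:eta2} are genuinely uniform—this is exactly what hypothesis (I.b) guarantees.
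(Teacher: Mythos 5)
Your proposal is correct and matches the paper's own proof essentially step for step: the same expansion of $\widehat{F}_T-\Phi_T$, the triangle/Minkowski inequality, the $A$-invariance of $\mu$ to remove the dependence on $a$, the cutoff estimates \eqref{eq:eta2} with the uniform compact support from (I.b), and the counting bound from (I.c) together with the definition of $M_T$. Nothing further is needed.
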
 

\vspace{0.2cm}

Before we proceed to the proof of this lemma, we discuss its relevance to our arguments so far. We wish to prove convergence in distribution for
the functions  
\[
 \Upsilon_T= V_T^{-1/2}\left(\widehat{F}_T - \Phi_{T}\right) + V_T^{-1/2}\left(\Phi_{T} - \int_{\xd} \Phi_{T} \, d\md\right) + V_T^{-1/2}\int_X\Big( \Phi_{T} - \widehat{F}_T\Big)\, d\mu.
\]
If $L_T$ is chosen as in \eqref{eq:cond1_1}, then the first and third term of  the right hand side tend to zero in the $L^1$-norm, whence $\Upsilon_T$ converges in distribution to a continuous measure if 
and only if the  functions 
\begin{equation}\label{eq:psi}
\Psi_T := V_T^{-1/2}\left(\Phi_{T} - \int_{\xd} \Phi_{T} \, d\md\right) 
\end{equation}
do.
In the upcoming subsections, we will analyse this type of sequences.

\begin{proof}[Proof of Lemma \ref{lemma_trunc}]
By construction, we have
\[
\left\|\widehat{F}_T - \Phi_{T}\right\|_{L^2(X)} \leq \sum_{i\in\mathcal{I}} \int_{Y_{T,i}} \sum_{a \in Q_{T,i}(y_i)} \left\|\big(\widehat{f}_{T,i}(\cdot,y_i)\circ a\big) (1-\eta_{L_T}\circ a)\right\|_{L^2(X)} \, d\kappa_{T,i}(y_i).
\]
Since the measure $\md$ is $\abd$-invariant, the inner terms are independent of $a \in Q_{T,i}(y_i)$, whence
\[
\left\|\widehat{F}_T - \Phi_{T}\right\|_{L^2(X)} \leq \sum_{i\in\mathcal{I}} \int_{Y_{T,i}} |Q_{T,i}(y_i)| \,  \left\|\widehat{f}_{T,i}(\cdot,y_i)(1-\eta_{L_T})\right\|_{L^2(X)} \, d\kappa_{T,i}(y_i).
\]
By the assumption (I.b), the supports of the functions $x \mapsto f_{T,i}(x,y_i)$ are all contained in a fixed compact set $\cK \subset \bR^d$, independent of  $T, i$ and $y_i$. Hence, by \eqref{eq:eta2},
\[
\left\|\widehat{f}_{T,i}(\cdot,y_i)(1-\eta_{L_T}(\cdot))\right\|_{L^2(X)} \ll_{\cK,\eps} L_T^{-d/2+1+\eps} \|f_{T,i}(\cdot,y_i)\|_{\infty}, \quad \textrm{for all $y_i \in Y_{T,i}$}.
\]
Furthermore, by the assumption (I.c), we have $|Q_{T,i}(y_i)| \le V_T$, so that we conclude that
\[
\left\|\widehat{F}_T - \Phi_{T}\right\|_{L^2(X)} \ll_{\cK,\eps}  L_T^{-d/2+1+\eps}
V_T \, \left(\sum_{i\in\mathcal{I}}\int_{Y_{T,i}}\|f_{T,i}(\cdot,y_i )\|_{\infty} \, d\kappa_{T,i}(y_{T,i}) \right).
\]
This implies the first part of the lemma, and the proof of the second part is similar.
\end{proof}

\subsection{Sobolev norms and mixing estimates}
\label{subsec:sobolev}
In order to obtain quantitative estimates on correlations, we need to control the smoothness of the functions. Our main
tool for this purpose are Sobolev norms, which we now introduce. First note that every $Y$ in the Lie algebra $\gsl_d(\bR)$ of $\SL_d(\bR)$ induces a differential operator $D_Y$ on $C^\infty(\xd)$ by
\[
(D_Y\varphi)(\Lambda) = \frac{d}{dt}\varphi(e^{tY}\Lambda) \mid_{t = 0}
\quad \hbox{for functions $\phi$ on $X$.}
\]
More generally, if we fix a basis $Y_1,\ldots,Y_m$ of $\gsl_d(\bR)$ with $m = d^2-1$, and if $Y$ is a monomial 
in the universal enveloping algebra of $\gsl_d(\bR)$ with respect to this basis, say $Y = Y_1^{\eta_1} \cdots Y_m^{\eta_m}$ for 
non-negative integers $\eta_1,\ldots,\eta_m$, then we define $D_\eta := D_{Y_1}^{\eta_1} \cdots D_{Y_m}^{\eta_m}$, and 
refer to the integer $|\eta| := \eta_1 + \ldots + \eta_m$ as the \emph{order} of $D_\eta$, where $\eta = (\eta_1,\ldots,\eta_m)$. We recall that
the Birkhoff-Poincar\'{e}-Witt theorem guarantees that the set of all (ordered) monomials with respect to our chosen basis $\{Y_1,\ldots,Y_m\}$
form a basis for the universal enveloping algebra of $\gsl_d(\bR)$. 
We write $C_c^\infty(\xd)$ for the space of compactly supported functions $\phi$ such that all the derivatives $D_\eta\phi$ exist. \\

Let $\Lambda \in \xd$. We say that a linear subspace $V < \bR^d$ is \emph{$\Lambda$-rational} if 
$V \cap \Lambda$ is a lattice in $V$. If $V$ is $\Lambda$-rational, we denote by $d_\Lambda(V)$ the volume of $V/V \cap \Lambda$.
We define
\[
\alpha(\Lambda) = \sup\big\{ d_\Lambda(V)^{-1} \, : \, \textrm{$V < \bR^d$ is $\Lambda$-rational} \big\}.
\]
It can readily be checked that $\alpha$ is a \emph{proper} function on $\xd$, and that for every compact set $\cC \subset \SL_d(\bR)$,
there is a constant $A_{\cC} > 0$ such that
\begin{equation}
\label{bndsalpha}
A_{\cC}^{-1} \alpha(\Lambda) \leq \alpha(g\Lambda) \leq A_{\cC} \alpha(\Lambda), \quad \textrm{for all $g \in \cC$ and $\Lambda \in \xd$}.
\end{equation}
Before we introduce Sobolev norm, we mention important properties of the $\alpha$-function in relation with Siegel transforms.
\vspace{0.2cm}

\begin{lemma}[\cite{Sch68}, Lemma 2]
\label{Siegelbnd}
If $f : \bR^d \ra \bR$ is a bounded function with bounded support, then
\[
\big|\widehat{f}(\Lambda)\big| \ll_{\supp (f)} \alpha(\Lambda) \|f\|_\infty , \quad \textrm{for all $\Lambda \in \xd$}.
\]
\end{lemma}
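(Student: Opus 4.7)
The plan is to bound the Siegel transform pointwise by $\|f\|_\infty$ times the number of nonzero lattice points of $\Lambda$ lying in $\supp f$, fix once and for all a Euclidean ball $B_R \supset \supp f$ (whose radius depends only on $\supp f$), and then invoke the geometry of numbers to majorise $|\Lambda \cap B_R|$ by $\alpha(\Lambda)$ up to a constant depending only on $R$. Since
\[
\big|\widehat{f}(\Lambda)\big| \le \|f\|_\infty\, \big|(\Lambda\setminus\{0\}) \cap B_R\big|,
\]
the whole problem reduces to proving $|\Lambda \cap B_R| \ll_{R,d} \alpha(\Lambda)$.

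First I would introduce Minkowski's successive minima $\lambda_1 \le \cdots \le \lambda_d$ of the Euclidean unit ball with respect to $\Lambda$ and apply the classical lattice-point counting bound (a standard consequence of Minkowski's second theorem)
\[
|\Lambda \cap B_R| \ll_d \prod_{i=1}^{d} \max\{1,\, R/\lambda_i\}.
\]
Letting $k = k(\Lambda,R) \in \{0,1,\ldots,d\}$ be the largest index with $\lambda_k \le R$ (with the empty product equal to $1$ when $k = 0$), the right hand side is bounded by $R^k \prod_{i=1}^k \lambda_i^{-1}$, and since $R$ depends only on $\supp f$ we obtain $|\Lambda \cap B_R| \ll_{R,d} \prod_{i=1}^{k} \lambda_i^{-1}$.

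Next I would convert $\prod_{i=1}^k \lambda_i^{-1}$ into a bound in terms of $\alpha(\Lambda)$. Pick linearly independent $v_1,\ldots,v_k \in \Lambda$ with $\|v_i\| = \lambda_i$, and set $V_k := \linspan(v_1,\ldots,v_k)$. Being spanned by lattice vectors of $\Lambda$, the subspace $V_k$ is $\Lambda$-rational, and a standard exchange argument between $v_1,\ldots,v_k$ and a Minkowski-reduced basis of $V_k \cap \Lambda$ yields the comparison $d_\Lambda(V_k) \asymp_d \lambda_1 \cdots \lambda_k$. Therefore
\[
\prod_{i=1}^k \lambda_i^{-1} \ll_d \frac{1}{d_\Lambda(V_k)} \le \alpha(\Lambda),
\]
and combining this with the previous step gives $|\Lambda \cap B_R| \ll_{R,d} \alpha(\Lambda)$, which closes the estimate.

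The main obstacle I anticipate is the two-sided comparison $d_\Lambda(V_k) \asymp_d \lambda_1 \cdots \lambda_k$: the upper bound $d_\Lambda(V_k) \le \lambda_1 \cdots \lambda_k$ is immediate from Hadamard applied to $v_1 \wedge \cdots \wedge v_k$, but the matching lower bound requires showing that a reduced basis of $V_k \cap \Lambda$ cannot be substantially shorter than the $v_i$. This is a classical Mahler-type lemma, and once it is in hand the remaining steps are a routine unrolling of the definitions.
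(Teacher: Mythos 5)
Your argument is correct, but note that the paper does not prove this lemma at all: it is quoted verbatim from Schmidt \cite{Sch68} (Lemma 2), so the only comparison available is with the classical literature, and your proof is essentially the standard geometry-of-numbers argument found there. One genuine simplification you are missing in your own write-up: the ``main obstacle'' you anticipate is not actually needed. Your chain of inequalities only uses
\[
\prod_{i=1}^{k}\lambda_i^{-1} \;\le\; \frac{1}{d_\Lambda(V_k)} \;\le\; \alpha(\Lambda),
\]
and the first inequality requires only the \emph{upper} bound $d_\Lambda(V_k)\le \|v_1\wedge\cdots\wedge v_k\|\le \lambda_1\cdots\lambda_k$, i.e.\ exactly the direction you call immediate from Hadamard (the sublattice generated by $v_1,\ldots,v_k$ has finite index in $V_k\cap\Lambda$, so its covolume dominates $d_\Lambda(V_k)$). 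The Mahler-type lower bound $\lambda_1\cdots\lambda_k\ll_d d_\Lambda(V_k)$ is never used, so no reduced-basis exchange argument is needed. The only other points worth recording are routine: the counting bound $|\Lambda\cap B_R|\ll_d \prod_{i}\max\{1,R/\lambda_i\}$ is indeed classical (a consequence of Minkowski's second theorem, or of a reduced-basis/box argument), $V_k=\linspan(v_1,\ldots,v_k)$ is $\Lambda$-rational since $V_k\cap\Lambda$ is a discrete subgroup spanning $V_k$, and in the degenerate case $k=0$ the Siegel transform has no nonzero lattice points in $B_R$ to count (alternatively $\alpha(\Lambda)\gg_d 1$ on the space of unimodular lattices), so the estimate is trivial there.
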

\vspace{0.2cm}

\noindent The following estimate is also well-known:
\vspace{0.2cm}

\begin{lemma}\label{l:alpha}
$\int_X \alpha^p \, d\mu<\infty$ for every $p<d$.
\end{lemma}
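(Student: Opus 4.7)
The plan is to reduce the statement to a tail estimate
\[
\mu(\{\Lambda \in X : \alpha(\Lambda) > t\}) \ll t^{-d} \quad\textrm{for all $t \geq 1$},
\]
from which $L^p$-integrability for $p<d$ follows by the layer-cake formula, noting that $\alpha \geq 1$ on $X$ (taking $V = \bR^d$ gives $d_\Lambda(V)=1$): the resulting integral $\int_1^\infty t^{p-1-d}\,dt$ converges exactly when $p<d$.

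To obtain the tail bound, I would combine Minkowski's theorem on successive minima with Siegel's mean value theorem \eqref{eq:siegel}. The first step is the comparison
\[
\alpha(\Lambda) \ll \max_{1 \leq k \leq d-1} \frac{1}{\lambda_1(\Lambda) \cdots \lambda_k(\Lambda)},
\]
where $\lambda_1(\Lambda) \leq \cdots \leq \lambda_d(\Lambda)$ denote the successive minima of $\Lambda$. This follows from reduction theory: for any $\Lambda$-rational subspace $V$ of dimension $k$, a Minkowski-reduced basis of $V \cap \Lambda$ has $i$-th vector of length at least $\lambda_i(V \cap \Lambda) \geq \lambda_i(\Lambda)$, whence $d_\Lambda(V) \gg \lambda_1(\Lambda) \cdots \lambda_k(\Lambda)$ by Minkowski's second theorem applied inside $V$. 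Taking reciprocals and supremising over all rational $V$ yields the displayed estimate. Consequently, for some constant $c>0$,
\[
\{\alpha > t\} \subset \bigcup_{k=1}^{d-1} \{\Lambda : \lambda_1(\Lambda) \cdots \lambda_k(\Lambda) < c/t\},
\]
and the task reduces to showing $\mu(\{\lambda_1 \cdots \lambda_k < s\}) \ll s^d$ uniformly in $k$.

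For $k=1$ this is immediate from \eqref{eq:siegel} applied to $f = \chi_{B_r}$: since $\lambda_1(\Lambda) < r$ forces $\widehat{\chi}_{B_r}(\Lambda) \geq 1$, Markov's inequality yields $\mu(\lambda_1 < r) \leq \Vol(B_r) \ll r^d$. For general $k$, the same bound follows from a Rogers-type higher-moment integration formula applied to the Siegel transform of a characteristic function on $\bigwedge^k \bR^d$, or alternatively from a direct integration in a Siegel fundamental domain using Iwasawa coordinates $a = \Diag(a_1,\ldots,a_d)$ with Haar density proportional to $\prod_{i<j}(a_i/a_j)$, where the product $\lambda_1(\Lambda) \cdots \lambda_k(\Lambda)$ is comparable to the relevant monomial in the diagonal entries. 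Summing over $k$ yields the required tail bound.

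The main obstacle lies in securing the upper bound $\alpha \ll \max_k 1/(\lambda_1 \cdots \lambda_k)$ with a constant depending only on $d$: one must control \emph{every} $\Lambda$-rational subspace uniformly, which requires the careful interplay between Minkowski reduction inside $V \cap \Lambda$ and Minkowski's second theorem for $\Lambda$ itself sketched above. Once this classical input is in place, the remaining measure estimates are essentially mechanical.
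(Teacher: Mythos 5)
The paper does not actually prove this lemma --- it is invoked as ``well-known'' with no argument given --- so your proposal is being measured against the standard proof rather than against anything in the text. Your outline is that standard proof and it is correct: the layer-cake reduction to the tail bound $\mu(\alpha>t)\ll t^{-d}$, the comparison $d_\Lambda(V)\gg_k \lambda_1(\Lambda)\cdots\lambda_k(\Lambda)$ obtained from $\lambda_i(V\cap\Lambda)\ge\lambda_i(\Lambda)$ together with the upper half of Minkowski's second theorem inside $V$, the resulting inclusion $\{\alpha>t\}\subset\bigcup_{k}\{\lambda_1\cdots\lambda_k<c/t\}$, and the Siegel--Markov treatment of $k=1$ are all sound. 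Two caveats concern the step where the real content sits, namely $\mu(\lambda_1\cdots\lambda_k<s)\ll s^{d}$ for $2\le k\le d-1$. The phrase ``Rogers-type higher-moment formula applied to the Siegel transform of a characteristic function on $\bigwedge^k\bR^d$'' is not quite the right tool as stated: $\bigwedge^k\Lambda$ is not an invariantly distributed lattice in $\bigwedge^k\bR^d$, and only decomposable vectors are relevant. What you actually need is the mean value theorem for the number of primitive rank-$k$ sublattices (equivalently, $\Lambda$-rational $k$-dimensional subspaces) of covolume at most $s$, whose expectation is $\asymp_{d,k} s^{d}$; this is precisely Schmidt's theorem in \cite{Sch68}, the reference the paper already uses for Lemma \ref{Siegelbnd}, and with it Markov's inequality finishes the estimate --- indeed, with this input you can skip the successive minima altogether and bound $\mu(\alpha>t)$ directly by the expected number of rational subspaces of covolume less than $1/t$. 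Your alternative Siegel-domain route also works: on a Siegel set one has $\lambda_i\asymp a_i$, and the integral of the density $\prod_{i<j}(a_i/a_j)$ over the part of the chamber where $a_1\cdots a_k<s$ is indeed $\asymp s^{d}$, but calling this ``essentially mechanical'' undersells it, since it is a Laplace-type linear-programming estimate over the Weyl chamber whose exponent must be verified to equal $d$ for every $k$ (it does). With either of these inputs made precise, your proof is complete and supplies exactly the argument the paper omits.
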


\vspace{0.4cm}

\noindent The following norms were introduced and studied by Einsieder, Margulis and Venkatesh \cite{EMV}. 
\begin{definition}[Sobolev norms]
Let $q$ be a positive integer. For $\varphi \in C^\infty_c(\xd)$, its \emph{Sobolev norm $S_q(\varphi)$ of order $q$} is defined as
\[
S_q(\varphi): = \sum_{|\eta| \leq q} \Big( \int_{\xd} |\alpha^d D_\eta \varphi|^2 \, d\md \Big)^{1/2}.
\]
\end{definition}

The explicit expression of the norm $S_q$ will not be important in our paper. Instead we shall use as black boxes, the following properties of
the norms, established in \cite{EMV} and in our previous paper \cite{BG2}.

\vspace{0.2cm}

\begin{proposition}[Section 5 in \cite{EMV}]
\label{prop_truncsob}
For all sufficiently large $q$, 
\vspace{0.1cm}
\begin{enumerate}
\item[(i)] $S_q(\varphi) \ll_q S_{q+1}(\varphi)$ and $\|\varphi\|_{L^\infty(X)} \ll_q S_q(\varphi)$ for all $\varphi \in C_c^\infty(\xd)$. \vspace{0.1cm}
\item[(ii)] for some $p \geq 1$, we have $S_q(\varphi_1 \varphi_2) \ll_q S_{q+p}(\varphi_1) S_{q+p}(\varphi_2)$, for all $\varphi_1, \varphi_2 \in C^\infty_c(\xd)$. \vspace{0.1cm}
\item[(iii)] there exists $\sigma_q > 0$ such that $S_q\big(\varphi\circ\aad(u)\big) \ll_q e^{\sigma_q \|u\|} S_q(\varphi)$, for all $u \in \bR^{k-1}$, 
where $\aad(u)$ is defined in \eqref{defb} and $\|\cdot\|$ is the $\ell^\infty$-norm on $\bR^{k-1}$.
\end{enumerate}
\end{proposition}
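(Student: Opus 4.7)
The plan is to establish each property by reducing it to a statement on the Lie group and then using the weight $\alpha^d$ to absorb the failure of compactness of $X$.

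Part (i) is almost formal. Monotonicity $S_q(\varphi) \ll_q S_{q+1}(\varphi)$ holds because the set of monomials of order $\le q$ is a subset of those of order $\le q+1$ and all integrands are non-negative. For the embedding $\|\varphi\|_{L^\infty(X)} \ll_q S_q(\varphi)$, I would cover $X$ by charts of bounded injectivity radius (say, open sets diffeomorphic to balls in $\gsl_d(\bR) \simeq \bR^{d^2-1}$) and invoke the classical Sobolev embedding $H^q \hookrightarrow L^\infty$ in dimension $d^2-1$ for $q$ sufficiently large. Since $\alpha \ge 1$ everywhere on $X$, the weighted $L^2$ norms dominate the unweighted $L^2$ norms used in the usual embedding, and uniformity over a locally finite cover yields a pointwise bound.

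Part (ii) is a Leibniz-type estimate. Expanding $D_\eta(\varphi_1\varphi_2)$ via the product rule produces a finite linear combination of $D_{\eta'}\varphi_1 \cdot D_{\eta''}\varphi_2$ with $|\eta'|+|\eta''| \le |\eta|$. For each such term, I would estimate
\[
\int_X \bigl|\alpha^d \,D_{\eta'}\varphi_1 \,D_{\eta''}\varphi_2\bigr|^2\,d\mu \le \bigl\|\alpha^d D_{\eta'}\varphi_1\bigr\|_{L^\infty(X)}^2 \int_X |D_{\eta''}\varphi_2|^2\,d\mu,
\]
and bound the $L^\infty$ factor by a Sobolev norm $S_{q+p}(\varphi_1)$ via a weighted analogue of (i) (the bound $\|\alpha^d \psi\|_\infty \ll S_{q+p}(\psi)$ for an appropriate fixed $p$ controlling the embedding exponent). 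The factor of $\alpha^{d}$ is placed entirely on one side — since $\alpha \ge 1$ it is harmless to dispense with it on the other factor — which is the reason only a loss of $p$ derivatives (rather than $2p$) is needed.

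Part (iii) is the distortion estimate and is the main obstacle. The chain rule gives $D_Y(\varphi\circ \aad(u)) = (D_{\Ad(\aad(u))Y}\varphi)\circ \aad(u)$, so iteratively
\[
D_\eta(\varphi\circ \aad(u)) = \sum_{|\eta'|=|\eta|} c_{\eta,\eta'}(u)\,(D_{\eta'}\varphi)\circ \aad(u),
\]
with coefficients $c_{\eta,\eta'}(u)$ bounded by $e^{C|\eta|\|u\|}$ since $\Ad(\aad(u))$ acts on root vectors of $\gsl_d(\bR)$ through characters of the form $e^{\pm(u_i - u_j)}$. The remaining issue is controlling $\alpha^d \circ \aad(u)$: a direct argument from the definition of $\alpha$, using that $\aad(u)$ distorts the volume of any $k$-dimensional subspace by a factor at most $e^{k\|u\|}$, yields $\alpha(\aad(u)\Lambda) \le e^{C'\|u\|}\alpha(\Lambda)$ (the strengthening of \eqref{bndsalpha} to unbounded $u$). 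Changing variables in the integral using $\aad(u)$-invariance of $\mu$ then produces the factor $e^{\sigma_q\|u\|}$ with $\sigma_q$ linear in $q$. The care required here lies exactly in tracking how the weight $\alpha^d$ grows along the orbit of $\aad(u)$, since the constant $A_{\cC}$ from \eqref{bndsalpha} diverges as $\cC$ is enlarged; obtaining the explicit exponential dependence in $\|u\|$ is the heart of the proof.
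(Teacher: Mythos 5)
The paper itself does not prove this proposition; it is imported wholesale from Section~5 of \cite{EMV} and used as a black box, so the only thing to judge is whether your sketch would actually constitute a proof. Part (iii) of your sketch is essentially right: the chain rule identity $D_Y(\varphi\circ \aad(u)) = (D_{\Ad(\aad(u))Y}\varphi)\circ \aad(u)$ with coefficients growing like $e^{C|\eta|\|u\|}$, the bound $\alpha(\aad(u)\Lambda)\le e^{C'\|u\|}\alpha(\Lambda)$ via the distortion of covolumes of $\Lambda$-rational subspaces, and the change of variables using $\aad(u)$-invariance of $\mu$ give exactly the stated estimate.

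The gap is in (i) and, by inheritance, (ii). The sentence ``since $\alpha\ge 1$ the weighted $L^2$ norms dominate the unweighted ones, and uniformity over a locally finite cover yields a pointwise bound'' assumes precisely what has to be proved. On the noncompact space $\xd$ the injectivity radius degenerates in the cusp, so the local Sobolev constants on your charts blow up and there is \emph{no} uniform unweighted inequality $\|\varphi\|_{L^\infty(X)}\ll \sum_{|\eta|\le q}\|D_\eta\varphi\|_{L^2(X)}$: a smooth, suitably truncated approximation of the indicator of $\{\alpha>L\}$ has sup norm $1$ while all its unweighted derivative $L^2$ norms tend to $0$ as $L\to\infty$. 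The point of the weight $\alpha^d$ is to beat the degenerating local constant, and this requires a quantitative comparison between $\alpha(x)$ and the local geometry at $x$ (a lower bound for the injectivity radius, or an upper bound for the multiplicity of the projection of a unit ball of $\SL_d(\bR)$ to $X$, by an explicit power of $\alpha(x)$ compatible with the exponent $d$). That comparison is the actual content of \cite[Sec.~5]{EMV} and is missing from your argument. The same issue reappears in (ii): your route needs $\|\alpha^d\psi\|_{L^\infty(X)}\ll S_{q+p}(\psi)$ with the \emph{same} weight $\alpha^d$ on both sides, which is strictly stronger than (i); the local embedding produces an extra factor of a negative power of the injectivity radius at $x$ that must be absorbed, and the observation $\alpha\ge 1$ gives you nothing there. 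So either strengthen (i) to a genuinely weighted embedding by proving the $\alpha$-versus-injectivity-radius bound, or, as the paper does, cite \cite{EMV} for all three properties.
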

\vspace{0.2cm}

For our next proposition, we need some notation and preliminary results.  
First, we recall some further properties of the cut-off functions $\eta_L$ 
constructed in \cite{BG2}:

\vspace{0.2cm}

\begin{proposition}[Lemma 4.10 in \cite{BG2}]
\label{prop_suppetaL}
There exists a constant $c > 0$ such that 
\[
\supp \eta_L \subset \{ \alpha \leq cL \big\}, \quad \textrm{for all $L > 0$},
\]
and for all $q \geq 1$, $f \in C^\infty(\bR^d)$, and $L > 0$,
\[
\sup_{|\alpha| \leq q} \Big\|D_\alpha\big(\widehat{f}\,\eta_L\big)\Big\|_{L^\infty(X)} \ll_{\supp (f),q} L\,\|f\|_{C^q}.
\]
\end{proposition}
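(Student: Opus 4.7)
The plan is to construct $\eta_L$ by convolving a smooth cutoff of $\alpha/L$ with a bump function on $\SL_d(\bR)$, and then to derive the derivative estimate from the Leibniz rule together with Lemma \ref{Siegelbnd}. I will use $\beta$ for multi-indices to avoid clashing with the proper function $\alpha$ on $\xd$. Fix once and for all a non-increasing $\chi \in C^\infty(\bR)$ with $\chi \equiv 1$ on $(-\infty,1]$ and $\chi \equiv 0$ on $[2,\infty)$, and a nonnegative $\rho \in C_c^\infty(\SL_d(\bR))$ supported in a compact neighbourhood $\cC$ of the identity with $\int \rho\, dg = 1$. I would then define
\[
\eta_L(\Lambda) := \int_{\SL_d(\bR)} \chi\!\left(\frac{\alpha(g\Lambda)}{L}\right) \rho(g)\, dg, \quad \Lambda \in \xd.
\]
For the support statement, if $\eta_L(\Lambda) > 0$, then $\alpha(g\Lambda) \leq 2L$ for some $g \in \cC$, and \eqref{bndsalpha} gives $\alpha(\Lambda) \leq A_{\cC}\alpha(g\Lambda) \leq 2A_{\cC} L$. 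Setting $c := 2A_{\cC}$ yields $\supp \eta_L \subset \{\alpha \leq cL\}$.

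For the derivative bound, I would first establish that $\eta_L$ itself has $L$-independent derivative estimates. Using bi-invariance of the Haar measure, a change of variables gives
\[
\eta_L(h\Lambda) = \int \chi\!\left(\frac{\alpha(g\Lambda)}{L}\right) \rho(gh^{-1})\, dg,
\]
which transfers all differentiation in $\Lambda$ onto $\rho$: iterating left-invariant fields at $h = e$, each $D_\beta$ produces an integral against some fixed derivative $\widetilde D_\beta \rho$, so that $\|D_\beta \eta_L\|_{L^\infty(\xd)} \leq \|\widetilde D_\beta \rho\|_{L^1(\SL_d(\bR))}$, independent of $L$.

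Next, a direct computation shows $D_Y \widehat{f} = \widehat{\widetilde Y f}$ where $(\widetilde Y f)(v) := \frac{d}{dt}\big|_{t=0} f(e^{tY}v) = (\nabla f)(v) \cdot (Yv)$ is supported in $\supp f$ with $\|\widetilde Y f\|_\infty \ll_{\supp f} \|f\|_{C^1}$; iterating yields $D_\beta \widehat{f} = \widehat{P_\beta f}$ for a differential operator $P_\beta$ of order $|\beta|$ with polynomial coefficients bounded on $\supp f$, so that $\|P_\beta f\|_\infty \ll_{\supp f,|\beta|} \|f\|_{C^{|\beta|}}$. Lemma \ref{Siegelbnd} then gives
\[
|D_\beta \widehat{f}(\Lambda)| \ll_{\supp f, |\beta|} \alpha(\Lambda)\, \|f\|_{C^{|\beta|}}.
\]
Combining via the Leibniz rule, $D_\beta(\widehat{f}\,\eta_L) = \sum_{\beta_1+\beta_2=\beta} \binom{\beta}{\beta_1} D_{\beta_1}\widehat{f}\cdot D_{\beta_2}\eta_L$ vanishes outside $\supp \eta_L$, where $\alpha(\Lambda) \leq cL$. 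This yields the desired bound
\[
\|D_\beta(\widehat{f}\, \eta_L)\|_{L^\infty(\xd)} \ll_{\supp f, q} L\, \|f\|_{C^q} \quad \text{for all } |\beta| \leq q.
\]

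The only non-routine point is the construction of a smooth cutoff of $\alpha$ with $L$-independent derivatives. Since $\alpha$ is only continuous (it is a supremum of smooth functions indexed by the countable set of rational subspaces), the naive choice $\chi(\alpha/L)$ is not smooth. The averaging trick above bypasses this obstruction by transferring all smoothness requirements onto the fixed bump $\rho$, while the equivariance property \eqref{bndsalpha} of $\alpha$ under compact perturbations of $\SL_d(\bR)$ guarantees that the support of the smoothed cutoff still lies in $\{\alpha \leq cL\}$ up to a bounded multiplicative constant.
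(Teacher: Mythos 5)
This proposition is not proved in the paper at all: it is imported verbatim as Lemma 4.10 of \cite{BG2}, so there is no internal proof to compare against. Your argument is correct and is essentially the construction used in the cited source: smooth the cutoff in $\alpha$ by convolving over $\SL_d(\bR)$ with a fixed bump $\rho$, use unimodularity to shift all $\Lambda$-derivatives onto $\rho$ (giving $L$-independent bounds $\|D_\beta\eta_L\|_\infty\le\|\widetilde D_\beta\rho\|_{L^1}$), get the support statement from \eqref{bndsalpha}, and then combine the Leibniz rule with $D_{\beta_1}\widehat f=\widehat{P_{\beta_1}f}$ and Lemma \ref{Siegelbnd} so that $\alpha\le cL$ on $\supp\eta_L$ yields the factor $L$. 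All the individual steps check out: $\{\alpha\le cL\}$ is closed (so it contains $\supp\eta_L$, not just $\{\eta_L>0\}$), every Leibniz term vanishes off $\supp\eta_L$, and the constants depend only on $\supp f$, $q$ and the fixed data $\chi,\rho$. The one caveat worth recording is that the paper uses a \emph{single} family $\eta_L$ that also satisfies \eqref{eq:eta1}--\eqref{eq:eta2}; since your construction was made from scratch, you should note that it delivers these too — by \eqref{bndsalpha} one has $\eta_L\equiv 1$ on $\{\alpha\le A_{\cC}^{-1}L\}$, so $1-\eta_L$ is supported in $\{\alpha> A_{\cC}^{-1}L\}$, and the tail bounds follow from Lemma \ref{Siegelbnd} together with the $L^p$-integrability of $\alpha$ for $p<d$ (Lemma \ref{l:alpha}), while \eqref{eq:eta1} is the case $q=0$ of your derivative estimate. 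With that remark added, your proof is a complete substitute for the black-boxed lemma.
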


\vspace{0.2cm}

The following corollary concerning Sobolev norms of truncated Siegel transforms is now immediate.

\vspace{0.2cm}

\begin{corollary}
\label{cor_truncsob}
For all $q \geq 1$, $f \in C^\infty(\bR^d)$, and $L > 0$,
\[
S_q\big(\widehat{f} \, \eta_L\big) \ll_{\supp (f),q} L^{d+1} \|f\|_{C^q}.
\]
\end{corollary}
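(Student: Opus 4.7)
My plan is to apply Proposition \ref{prop_suppetaL} directly to each term in the definition of $S_q$ and exploit the fact that the support of $\widehat{f}\,\eta_L$ is contained in a sublevel set of $\alpha$.

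More precisely, unfolding the definition, I would need to bound
\[
\int_{\xd} \bigl|\alpha^d\, D_\eta\bigl(\widehat{f}\,\eta_L\bigr)\bigr|^2 \, d\md
\]
for each multi-index $\eta$ with $|\eta| \leq q$, and then sum. The first observation is that differentiating does not enlarge the support, so $\supp D_\eta(\widehat{f}\,\eta_L) \subset \supp \eta_L \subset \{\alpha \leq cL\}$ by the first part of Proposition \ref{prop_suppetaL}. Consequently $\alpha^d \leq (cL)^d$ on the support of the integrand.

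Next I would invoke the pointwise bound in Proposition \ref{prop_suppetaL}, namely
\[
\bigl\| D_\eta\bigl(\widehat{f}\,\eta_L\bigr) \bigr\|_{L^\infty(X)} \ll_{\supp(f),q} L\, \|f\|_{C^q}, \quad \text{for } |\eta| \leq q.
\]
Combining the two estimates gives the pointwise inequality
\[
\bigl|\alpha^d\, D_\eta\bigl(\widehat{f}\,\eta_L\bigr)\bigr| \ll_{\supp(f),q} L^{d+1}\, \|f\|_{C^q} \cdot \chi_{\{\alpha \leq cL\}},
\]
and since $\md$ is a probability measure, integrating and taking square roots yields
\[
\Bigl(\int_{\xd} \bigl|\alpha^d\, D_\eta\bigl(\widehat{f}\,\eta_L\bigr)\bigr|^2 \, d\md\Bigr)^{1/2} \ll_{\supp(f),q} L^{d+1}\, \|f\|_{C^q}.
\]
Summing over the finitely many multi-indices $\eta$ with $|\eta| \leq q$ gives the claimed bound on $S_q(\widehat{f}\,\eta_L)$.

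There is essentially no obstacle here: the result is a direct consequence of Proposition \ref{prop_suppetaL}, with the key point being that the support restriction provided by $\eta_L$ converts the unbounded factor $\alpha^d$ into a polynomial factor of $L$. The only mild care needed is to make sure the implicit constants absorb the dependence on $\supp(f)$ and on the finite number of multi-indices appearing in $S_q$, both of which are harmless.
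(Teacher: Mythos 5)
Your proof is correct and is essentially the paper's own argument: the corollary is stated there as an immediate consequence of Proposition \ref{prop_suppetaL}, obtained exactly as you do by combining the support inclusion $\supp\eta_L \subset \{\alpha \leq cL\}$ (so $\alpha^d \ll L^d$ on the support of every derivative) with the $L^\infty$ bound $\|D_\eta(\widehat{f}\,\eta_L)\|_{L^\infty(X)} \ll_{\supp(f),q} L\|f\|_{C^q}$, and then integrating against the probability measure $\mu$ and summing over the finitely many multi-indices of order at most $q$. No gaps.
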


\vspace{0.2cm}

We also record the following corollary for future references. It is immediate from the inequalities in \eqref{bndsalpha} and the first part of Proposition \ref{prop_suppetaL}.

\vspace{0.2cm}

\begin{corollary}
\label{perturbetaL}
For every compact set $\cC \subset \SL_d(\bR)$, there is a constant $B_{\cC} > 0$ such that
\[
\eta_L \circ g \leq \chi_{\{\alpha \leq B_{\cC}L\}} \quad \textrm{for all $g \in \cC$ and $L > 0$}.
\]
\end{corollary}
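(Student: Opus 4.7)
The plan is to observe that this corollary is essentially a bookkeeping exercise combining the support information for $\eta_L$ with the quasi-invariance of the $\alpha$-function under compact perturbations. I would proceed in three short steps.

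First, I would use the first assertion of Proposition \ref{prop_suppetaL}: there is a constant $c > 0$, independent of $L$, such that $\supp \eta_L \subset \{\Lambda \in \xd : \alpha(\Lambda) \leq cL\}$. Consequently, for any $g \in \SL_d(\bR)$ and $\Lambda \in \xd$, if $\eta_L(g\Lambda) \neq 0$, then necessarily $\alpha(g\Lambda) \leq cL$.

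Second, I would invoke the quasi-invariance \eqref{bndsalpha}: for the given compact set $\cC \subset \SL_d(\bR)$, there is a constant $A_{\cC} > 0$ with $\alpha(\Lambda) \leq A_{\cC}\, \alpha(g\Lambda)$ for every $g \in \cC$ and $\Lambda \in \xd$. Combining this with the previous step shows that whenever $\eta_L(g\Lambda) \neq 0$ and $g \in \cC$, we have
\[
\alpha(\Lambda) \leq A_{\cC}\, \alpha(g\Lambda) \leq A_{\cC}\, c\, L.
\]

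Finally, I would set $B_{\cC} := A_{\cC}\, c$. Since $\eta_L$ takes values in $[0,1]$, on the set $\{\alpha \leq B_{\cC} L\}$ we trivially have $\eta_L \circ g \leq 1 = \chi_{\{\alpha \leq B_{\cC}L\}}$, while outside that set the previous step forces $\eta_L(g\Lambda) = 0$, giving the desired pointwise inequality uniformly in $g \in \cC$ and $L > 0$. There is no genuine obstacle here; the only subtlety is making sure the constants $c$ from Proposition \ref{prop_suppetaL} and $A_{\cC}$ from \eqref{bndsalpha} are both independent of $L$, which they are by construction.
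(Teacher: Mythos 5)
Your proof is correct and follows exactly the route the paper indicates: the paper derives this corollary as an immediate consequence of the support containment $\supp \eta_L \subset \{\alpha \leq cL\}$ from Proposition \ref{prop_suppetaL} together with the two-sided bounds \eqref{bndsalpha}, which is precisely your argument with $B_{\cC} = A_{\cC}\,c$. Nothing is missing.
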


\vspace{0.2cm}

Recall that $\abd \simeq \bR^{k-1}$ via the map $u \mapsto \aad(u)$ defined in \eqref{defb}. Let us throughout the rest of the
section denote by $\|\cdot\|$ the $\ell^\infty$-norm on $\bR^{k-1}$. The following theorem is a special case contained in \cite{BEG}
by Einsieder and the two authors obtained upon realizing $\xd$ as the quotient space $\SL_d(\bR)/\SL_d(\bZ)$. Roughly speaking, this theorem asserts that if $\varphi \in C_c^\infty(\xd)$, then the family $u \mapsto \varphi(\aad(u) \cdot)$ consists of "almost independent" random variables, 
at least if the $u$'s are far apart. 

\vspace{0.2cm}

\begin{theorem}[Theorem 1.1 in \cite{BEG}]
\label{thm_multmix}
For every $r \geq 2$, there exist $q_r \geq 1$ and $\delta_r > 0$ such that for all $q \geq q_r$,
$\varphi_1,\ldots,\varphi_r \in C^\infty_c(\xd)$, and $u^{(1)},\ldots,u^{(r)} \in \bR^{k-1}$,
\[
\left| \int_{\xd} \Big(\prod_{m=1}^r\varphi_{m}\circ \aad(u^{(m)})\Big) \, d\md - \prod_{m=1}^r \int_{\xd} \varphi_{m} \, d\md \right|
\ll_{r}
e^{-\delta_r \, \min_{j \neq k} \|u^{(j)}-u^{(k)}\|} \, \prod_{m=1}^r S_{q_r}(\varphi_m).
\]
\end{theorem}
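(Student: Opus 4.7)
The plan is to prove the theorem by induction on $r$, with the base case $r=2$ being quantitative exponential $2$-mixing of the $\aad$-action on $(\xd, \md)$, and the inductive step using the commutativity of $\aad \simeq \bR^{k-1}$ together with careful Sobolev-norm bookkeeping to peel off factors. Since $d \geq 3$, $\SL_d(\bR)$ enjoys Kazhdan's property (T), so its representation on $L^2_0(\xd)$ has a uniform spectral gap; combined with Howe--Moore-type matrix coefficient decay along one-parameter diagonal subgroups and Proposition \ref{prop_truncsob}(i) to translate pointwise smoothness into $L^2$-bounds on derivatives, this gives, for any $u \in \bR^{k-1}$ and smooth $\varphi_1, \varphi_2 \in C_c^\infty(\xd)$,
\[
\Bigl| \int_{\xd} \varphi_1 \cdot (\varphi_2 \circ \aad(u)) \, d\md - \int_{\xd} \varphi_1 \, d\md \int_{\xd} \varphi_2 \, d\md \Bigr| \ll e^{-\delta_2 \|u\|} S_{q_2}(\varphi_1) S_{q_2}(\varphi_2),
\]
establishing the base case.

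For the inductive step, fix $r \geq 3$ and assume the theorem holds for all $r' < r$. Given parameters $u^{(1)}, \ldots, u^{(r)}$ with minimum separation $D := \min_{j \neq k} \|u^{(j)} - u^{(k)}\|$, first use $\md$-invariance of the $\aad$-action to recenter the configuration (this avoids spurious growth in Proposition \ref{prop_truncsob}(iii) from absolute positions). Next, by pigeonhole over the $k-1$ coordinate axes of $\bR^{k-1}$ and an ordering argument along the pigeonholed direction, select an index $m^*$ whose parameter $u^{(m^*)}$ is separated from every other $u^{(j)}$ by at least $cD$ in the $\ell^\infty$-norm, for a constant $c = c(k,r) > 0$. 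Writing $\Phi := \prod_{m \neq m^*} \varphi_m \circ \aad(u^{(m)})$, apply the $r=2$ case to the pair $(\Phi, \varphi_{m^*} \circ \aad(u^{(m^*)}))$ to obtain
\[
\int_{\xd} \Phi \cdot (\varphi_{m^*} \circ \aad(u^{(m^*)})) \, d\md \;=\; \Bigl(\int_{\xd} \Phi \, d\md\Bigr) \Bigl(\int_{\xd} \varphi_{m^*} \, d\md\Bigr) \;+\; \mathcal{E},
\]
with error $|\mathcal{E}| \ll e^{-\delta_2 c D}\, S_{q_2}(\Phi)\, S_{q_2}(\varphi_{m^*} \circ \aad(u^{(m^*)}))$. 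The main term is then factorized further by applying the inductive hypothesis to $\int_{\xd} \Phi \, d\md$, producing $\prod_m \int_{\xd} \varphi_m \, d\md$ modulo an error of order $e^{-\delta_{r-1} D}$.

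The principal technical difficulty---and the main obstacle---is propagating quantitative control of Sobolev norms through this process. Iterating the product rule Proposition \ref{prop_truncsob}(ii) a total of $r-2$ times yields $S_{q_2}(\Phi) \ll \prod_{m \neq m^*} S_{q_2 + p(r-2)}(\varphi_m \circ \aad(u^{(m)}))$, after which Proposition \ref{prop_truncsob}(iii) introduces multiplicative blowup factors $e^{\sigma_{q_2+p(r-2)} \|u^{(m)}\|}$. In the worst case these growth factors can swamp the $2$-mixing decay $e^{-\delta_2 c D}$, because the $\|u^{(m)}\|$ may be of the order of the \emph{diameter} of the configuration rather than of the minimum gap $D$. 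The remedy is to select $m^*$ to realize this diameter in the pigeonholed coordinate direction, so that the effective $2$-mixing gain scales with $\max_m \|u^{(m)}\|$ and not merely with $D$; one then chooses $\delta_r = c \, \delta_{r-1}$ for a constant $c < 1$ (giving a geometric decrease with $r$) and $q_r = q_{r-1} + p$, so that the accumulated Sobolev blowup is dominated at every step. Summing the main-term and error contributions through the recursion produces the asserted bound with the required $\delta_r$ and $q_r$.
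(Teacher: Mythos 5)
You should first be aware that the paper does not prove Theorem \ref{thm_multmix} at all: it is imported verbatim as Theorem 1.1 of \cite{BEG}, and the only "proof content" in the paper is the remark that the general statement for $r$-tuples in $G=\SL_d(\bR)$ specializes to the subgroup $\abd$, with the invariant Riemannian metric restricted to $\abd$ being quasi-isometric to the $\ell^\infty$-metric on $\bR^{k-1}$. So what you are attempting is a from-scratch proof of the black box. Your base case $r=2$ is fine in outline (effective two-point mixing for diagonal translates on $\SL_d(\bR)/\SL_d(\bZ)$ via spectral gap and Sobolev norms is standard), but the inductive step contains a genuine gap, and it sits exactly at the point you flag as "the principal technical difficulty" without resolving it.

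Concretely: after recentering, applying the $r=2$ case to the pair $\big(\Phi,\varphi_{m^*}\circ\aad(u^{(m^*)})\big)$ forces you to pay $S_{q_2}(\Phi)$, and by Proposition \ref{prop_truncsob}(ii)--(iii) the only available bound is $S_{q_2}(\Phi)\ll \prod_{m\neq m^*} e^{\sigma_{q'}\|u^{(m)}-w\|}S_{q'}(\varphi_m)$, where $w$ is the common recentering point of the remaining cluster; the gain from two-point mixing is $e^{-\delta_2\,\dist(u^{(m^*)},\,\text{cluster})}$. In the critical configuration where \emph{all} pairwise gaps are comparable to the minimum gap $D$ (e.g.\ $r$ points roughly equally spaced at scale $D$), every choice of $m^*$ yields a loss $e^{c\,\sigma_{q_2}D}$ against a gain $e^{-c'\delta_2 D}$, and nothing forces $\delta_2\gtrsim\sigma_{q_2}$ --- in practice $\delta_2$ is a small spectral-gap constant while $\sigma_q$ grows with $q$ and $q_2$ must already be large for the base case. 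The resulting bound is then worse than the trivial one, and the sign of the exponent cannot be repaired by taking $\delta_r=c\,\delta_{r-1}$ and $q_r=q_{r-1}+p$: the defect is not bookkeeping. Your remedy (choose $m^*$ realizing the diameter so the gain scales with $\max_m\|u^{(m)}\|$) only helps when the diameter greatly exceeds $D$; in the regime diameter $\asymp D$, which is the one that matters for the stated bound in $\min_{j\neq k}\|u^{(j)}-u^{(k)}\|$, it changes nothing. This is precisely why the implication "exponential $2$-mixing plus commutativity plus Sobolev control $\Rightarrow$ exponential mixing of all orders" is the substantial content of \cite{BEG} rather than a formal induction: qualitatively, whether $2$-mixing implies $3$-mixing is open in general (Rokhlin's problem), so any proof must exploit finer structure of the homogeneous action (effective equidistribution of translates and a much more elaborate inductive scheme, as in \cite{BEG}), not merely peel off one factor and re-apply two-point decay. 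As written, the proposal does not establish the theorem; for the purposes of this paper the correct move is simply to cite \cite{BEG}, as the authors do.
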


\vspace{0.2cm}

Theorem 1.1 in \cite{BEG} is formulated for general $r$-tuples of elements in $G = \SL_d(\bR)$, and not just for $r$-tuples in $\abd$.
Furthermore, in the version in \cite{BEG}, the $\min_{i \neq j}$-expression is applied to differences with respect to an invariant Riemannian metric on $G$.
The restriction of any such metric to $\abd$ is quasi-isometric to the $\ell^\infty$-distance on $\bR^{k-1}$, and the resulting constants are assumed to have
been absorbed in $\delta_r$ and by the $\ll$-sign.

\subsection{Cumulants}
\label{subsec:cumulants}

We review the notion of cumulants, and a classical CLT-criterion due to Frechet and Shohat. In this subsection $(X,\mu)$ can be a general probability measure space. 

\begin{definition}[Cumulants]
\label{def_cumbrr}
Fix $r \geq 2$. Given $\varphi_1,\ldots,\varphi_r \in L^\infty(X)$, we define their \emph{cumulant $\cum_r(\varphi_1,\ldots,\varphi_r)$
of order $r$} by
\[
\cum_{[r]}(\varphi_1,\ldots,\varphi_r) := \sum_{\cP \in \mathfrak{P}_{[r]}} (-1)^{|\cP|-1} \prod_{I \in \cP} \Big( \int_X \prod_{i \in I} \varphi_i \, d\mu \Big),
\]
where $\mathfrak{\cP}_{[r]}$ denotes the set of partitions of the set $[r] = \{1,\ldots,r\}$. Given $\Phi \in L^\infty(X)$, we define its
\emph{$r$-cumulant $\cum_r(\Phi)$} by
\[
\cum_{r}(\Phi) := \cum_{[r]}(\Phi,\ldots,\Phi).
\]
\end{definition}

\begin{remark}
It is clear that $\cum_{[r]}$ is multi-linear in the functions $\varphi_1,\ldots,\varphi_r$, and if one of them is a constant function, then 
$\cum_{[r]}(\varphi_1,\ldots,\varphi_r) = 0$. In particular,
\[
\cum_{[r]}(\varphi_1,\ldots,\varphi_r) = \cum_{[r]}\left(\varphi_1 - \int_X \varphi_1 \, d\mu,\ldots,\varphi_r - \int_X \varphi_r \, d\mu\right)
\]
and
\[
\cum_r\left(\Phi-\int_X \Phi \, d\mu\right) = \cum_r(\Phi).
\]
Furthermore, the $2$-cumulant of $\Phi$ is just the $\mu$-variance of $\Phi$. 
\end{remark}

The main property of cumulants that makes them valuable to us in this paper is summarized in the following CLT-criterion
by Frechet and Shohat, which can be deduced from their results in \cite{FS}. It is essentially the classical method of moments 
tailored for (distributional) convergence to the normal distribution. 

\vspace{0.2cm}

\begin{proposition}[Frechet-Shohat's Cumulant Criterion]
\label{prop_cumulants}
Let $(\Psi_T)$ be a sequence of real-valued, bounded and measurable functions on $X$
such that 
\begin{itemize}
	\item $\int_X \Psi_T\, d\mu=0,$
	\item the limit $\sigma^2 := \lim_{T} \|\Psi_T\|^2_{L^2(X)}$ exists and is finite,
	\item $\cum_r(\Psi_T) \ra 0$ for all $r \geq 3$.
\end{itemize}
Then the $\md$-distributions of $\Psi_T$ converge in the sense of distribution to the Normal Law with mean zero and variance $\sigma^2$ (the case $\sigma = 0$ is interpreted as convergence in the sense of distributions to the Dirac measure at $0$).
\end{proposition}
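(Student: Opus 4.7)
The plan is to deploy the classical method of moments, with the moment--cumulant inversion serving as the bridge between the hypotheses (which are stated in terms of cumulants) and the object the method requires (moments). Applying M\"obius inversion on the partition lattice to Definition~\ref{def_cumbrr} yields the identity
\[
\int_X \Psi_T^r \, d\mu = \sum_{\cP \in \mathfrak{P}_{[r]}} \prod_{I \in \cP} \cum_{|I|}(\Psi_T),
\]
which expresses each moment of $\Psi_T$ as a finite polynomial in the cumulants of $\Psi_T$ of order at most $r$. Since the $\Psi_T$ are bounded, all such moments exist and the identity is rigorously justified.

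Next I would extract the limit of each moment. Every singleton block contributes the factor $\cum_1(\Psi_T) = \int_X \Psi_T \, d\mu = 0$, so any partition containing a singleton drops out. By the third hypothesis, any partition containing a block of size $\geq 3$ also contributes a vanishing factor in the limit. The only surviving partitions are therefore the perfect matchings of $[r]$, which exist exactly when $r$ is even; using the second hypothesis together with the fact that $[2m]$ admits $(2m-1)!!$ perfect matchings, I obtain
\[
\lim_{T \to \infty} \int_X \Psi_T^r \, d\mu = \begin{cases} 0 & \textrm{if $r$ is odd,} \\ (2m-1)!! \, \sigma^{2m} & \textrm{if $r = 2m$.} \end{cases}
\]
These are exactly the moments of the Normal Law $\cN(0,\sigma^2)$.

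To conclude, I would invoke the classical method-of-moments theorem of Frechet and Shohat: if every moment of a sequence of probability measures on $\bR$ converges to the corresponding moment of a Borel probability measure $\nu$ which is uniquely determined by its moments, then the sequence converges in distribution to $\nu$. The Normal Law $\cN(0,\sigma^2)$ is uniquely determined by its moments, since its $(2m)$-th moment grows only like $(2m)!/(2^m m!)\,\sigma^{2m}$ and hence Carleman's condition $\sum_m m_{2m}^{-1/(2m)} = \infty$ is satisfied. Therefore the $\mu$-distributions of $\Psi_T$ converge in distribution to $\cN(0,\sigma^2)$. In the degenerate case $\sigma = 0$ all limiting moments vanish, which by the same criterion forces the weak limit to be the Dirac mass at $0$.

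There is no serious obstacle internal to this proposition: its content is essentially a combinatorial translation between cumulants and moments combined with the classical Frechet--Shohat criterion. The genuine difficulty in the paper lies elsewhere, namely in verifying the hypothesis $\cum_r(\Psi_T) \to 0$ for $r \geq 3$ for the specific functions $\Psi_T$ of interest, which is handled later via the higher-order mixing estimate of Theorem~\ref{thm_multmix} combined with the approximation results of Section~\ref{sec:general}.
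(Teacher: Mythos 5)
Your proof is correct and takes essentially the route the paper intends: the paper offers no proof of this proposition, merely noting it can be deduced from Fr\'echet--Shohat's method of moments, which is precisely what you carry out via the moment--cumulant inversion, the vanishing of singleton and size-$\geq 3$ blocks, and Carleman's condition for the Gaussian (and for the Dirac mass when $\sigma=0$). One small caveat: the identity $\int_X \Psi_T^r\, d\mu = \sum_{\cP}\prod_{I\in\cP}\cum_{|I|}(\Psi_T)$ holds for the standard cumulants, whose definition carries a factor $(|\cP|-1)!$ that Definition~\ref{def_cumbrr} omits --- evidently a typo in the paper (its own Remark, asserting vanishing when an argument is constant, forces the standard convention), so your argument is the right one once that convention is restored.
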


\vspace{0.2cm}

In order to apply this proposition,
we have to analyze the cumulants $\cum_r(\Psi_T)$.
This task will be carried out in the next section.

\subsection{Estimating cumulants of order $r \geq 3$}

Let $\Psi_T$ be defined by \eqref{eq:psi}. Our goal is to show that under suitable additional conditions, 
$$
\cum_{r}(\Psi_T)\to 0\quad \quad \textrm{as $T \ra \infty$}
$$
for all $r\ge 3$. Since
$$
\cum_{r}(\Psi_T)
=V_T^{-r/2}\cum_{r}\left(\Phi_{T}-\int_{\xd} \Phi_{T} \, d\md\right) = V_T^{-r/2}\cum_{r}(\Phi_{T}), 
$$
this is equivalent to
\begin{equation}\label{eq:ccc}
\cum_{r}(\Phi_T)= o\left(V_T^{r/2}\right) \quad \textrm{as $T \ra \infty$}.
\end{equation}

Let us from now on fix $r \geq 3$. 
For each $r$-tuples $\bi = (i_1,\ldots,i_r) \in \mathcal{I}^r$, we set
\begin{align*}
Y_{T,\bi}: = Y_{T,i_1} \times \cdots \times Y_{T,i_r}\quad\hbox{and}\quad
\kappa_{T,\bi} : = \kappa_{T,i_1} \otimes \cdots \otimes \kappa_{T,i_r}, 
\end{align*}
and for $y = (y_1,\ldots,y_r) \in Y_{T,\bi}$, we set
\begin{align*}
Q_{T,\bi}(y) : = Q_{T,i_1}(y_1) \times \cdots \times Q_{T,i_r}(y_r).
\end{align*}
We write elements of $Q_{T,\bi}(y)$ as $\bu=(u^{(1)},\ldots, u^{(r)})$.
Using the multi-linearity of the cumulants, we see that
$\cum_{[r]}(\Phi_{T})$ can be written as
\[
 \sum_{\bi\in\mathcal{I}^r} \int_{Y_{T,\bi}} \sum_{\bu \in Q_{T,\bi}(y)} \cum_{[r]}\left(\varphi_{T,i_1}(\cdot,y_1)\circ \aad(u^{(1)}),\cdots,\varphi_{T,i_r}(\cdot,y_r)\circ \aad(u^{(r)})\right) \, d\kappa_{T,\bi}(y).
\]
We shall make the following additional assumptions regarding the data
defining the function $\Phi_T$. Throughout this section,
$\|\cdot\|$ denotes the $\ell^\infty$-norm on $\bR^{k-1}$ and $B(x,\gamma)$ the ball with respect to this norm.
\vspace{0.1cm}
\begin{enumerate}
\item[(II.a)] there exist finite sets $\widetilde{Q}_{T,i} \subset \bR^{k-1}$ 
satisfying:
\vspace{0.1cm}
\begin{itemize}
	\item for all $\gamma\ge 1$
\begin{equation}
\label{QTgrowth}
\big|\widetilde{Q}_{T,i} \cap B(u,\gamma)\big| \ll \gamma^{k-1},
\end{equation}
where the implicit constants are independent of $u$, $T$, and $i$.	\vspace{0.1cm} 
\item $\max_i |\tilde Q_{T,i}|\ll V_T$ with a parameter $V_T$ satisfying $V_T\to\infty$ as $T\to\infty$.
\end{itemize}
\vspace{0.2cm}

\item[(II.b)] there exist Borel maps $\beta_{T,i} : \bR^{k-1} \times Y_{T,i} \ra \bR^{k-1}$ such that 
$$
Q_{T,i}(y_i) = \beta_{T,i}\big(\widetilde{Q}_{T,i},y_i\big)
$$
satisfying:
\vspace{0.1cm}
\begin{itemize}
	\item there exist $c_1, c_2 > 0$, independent of $T$, such that
	for all $u,v \in \widetilde{Q}_{T,i}$,
	\begin{equation}
	\label{enumcond1}
	\min_{i,j} \inf_{y_i \in Y_{T,i}} \inf_{y_j \in Y_{T,j}} \big\|\beta_{T,i}(u,y_i) - \beta_{T,j}(v,y_j)\big\| \geq c_1 \|u-v\| - c_2, 
	\end{equation}
	
	\item there exist maps $\widetilde{\beta}_{T,i} : \bR^{k-1} \ra \bR^{k-1}$ such that for all $u \in \widetilde{Q}_{T,i}$,
	\begin{equation}
	\label{enumcond2}
	\sup_{T}  \sup_{y_i \in Y_{T,i}} \big\|\beta_{T,i}(u,y_i)-\widetilde{\beta}_{T,i}(u)\big\|  < \infty.
	\end{equation}
\end{itemize}

\item[(II.c)] For the functions $f_{T,i}$ from (I.b),
there exist Borel functions $h_{T,i} : \bR^d \times Y_{T,i} \ra [0,\infty)$ such that
\[
f_{T,i}\big(\aad(\beta_{T,i}(u,y_i))z,y_i\big) \leq h_{T,i}\big(\aad(\widetilde{\beta}_{T,i}(u))z,y_i\big)
\]
for all $u \in \widetilde{Q}_{T,i}$,  $y_i \in Y_{T,i}$, and $z \in \bR^d$.
We further assume that the family of the functions 
\[
H_{T,i}(z) := \int_{Y_{T,i}} h_{T,i}(z,y_i) \, d\kappa_{T,i}(y_i)
\] 
is uniformly bounded, and there exists a fixed compact set $\cK' \subset \bR^d$ such that 
$$
\supp(H_{T,i}) \subset \cK'
$$
for all $T$ and $i$.
\end{enumerate}

\begin{remark}
We note that the condition (I.c) from Subsection \ref{subsec:data}
follows immediately from condition (II.a) and the first part of condition (II.b). 
\end{remark}

With this new notation, we set
\[
\Xi_{r,T,\bi}(y) := 
\sum_{\bu \in \widetilde{Q}_{T,\bi}} \cum_{[r]}\left(\varphi_{T,i_1}(\cdot, y_{i_1})\circ\aad\big(\beta_{T,i_1}(u^{(1)},y_{i_1})\big),\cdots,
\varphi_{T,i_r}(\cdot, y_{i_r})\circ \aad\big(\beta_{T,i_r}(u^{(r)},y_{i_r})\big )\right),
\]
where $\widetilde{Q}_{T,\bi} := \widetilde{Q}_{T,i_1} \times \cdots \times \widetilde{Q}_{T,i_r}$. Then
\begin{equation}
\label{cumrexp}
\cum_{r}(\Phi_{T}) = \sum_{\bi\in\mathcal{I}^r} \int_{Y_{T,\bi}} \Xi_{r,T,\bi}(y) \, d\kappa_{T,\bi}(y).
\end{equation}

For $\gamma > 0$, we define the \emph{$r$-diagonal $\gamma$-neighborhood} $\Delta_r(\gamma)$ by
\[
\Delta_r(\gamma) := \left\{ (u^{(1)},\ldots,u^{(r)}) \in (\bR^{k-1})^r :\,\,  \|u^{(j)} - u^{(k)}\| \leq \gamma \;\;\hbox{for all $j,k$}\right\}.
\]
We split the sum defining $\Xi_{r,T,\bi}$ into
two subsums subdivided with respect to the set $\Delta_r(\gamma)$.
Namely, we choose a parameter $\gamma_{T,r}\to \infty$, which will be specified later, and write 
\[
\Xi_{r,T,\bi} = \Xi^{(1)}_{r,T,\bi} + \Xi^{(2)}_{r,T,\bi}\,,
\]
where $\Xi^{(1)}_{r,T,\bi}(y)$ denotes the sum over \emph{clustered} $r$-tuples
\begin{equation}
\label{defsum1}
\sum_{\bu \in \widetilde{Q}_{T,\bi} \cap \Delta_r(\gamma_{T,r})}
\cum_{[r]}\left(\varphi_{T,i_1}(\cdot, y_{i_1})\circ\aad\big(\beta_{T,i_1}(u^{(1)},y_{i_1})\big),\cdots,
\varphi_{T,i_r}(\cdot, y_{i_r})\circ \aad\big(\beta_{T,{i_r}}(u^{(r)},y_{i_r}) \big)\right)
,
\end{equation}
and $\Xi^{(2)}_{r,T,\bi}(y)$ denotes the sum over \emph{separated} $r$-tuples:
\begin{equation}
\label{defsum2}
\sum_{\bu \in \widetilde{Q}_{T,\bi} \cap \Delta_r(\gamma_{T,r})^c} 
\cum_{[r]}\left(\varphi_{T,i_1}(\cdot, y_{i_1})\circ\aad\big(\beta_{T,i_1}(u^{(1)},y_{i_1})\big),\cdots,
\varphi_{T,i_r}(\cdot, y_{i_r})\circ \aad\big(\beta_{T,{i_r}}(u^{(r)},y_{i_r}) \big)\right).
\end{equation}
The aim in the upcoming subsections is to find conditions on the parameters $\gamma_{T,r}$ and $L_T$ such that for every 
$\bi = (i_1,\ldots,i_r) \in \mathcal{I}^r$,
\begin{equation}
\label{1sum}
\int_{Y_{T,\bi}} \big|\Xi^{(1)}_{r,T,\bi}(y)\big| \, d\kappa_{T,\bi}(y) = o\left(V_T^{r/2}\right)\quad \hbox{as $T\to\infty$,}
\end{equation}
and
\begin{equation}
\label{2sum}
\sup_{y \in Y_{T,\bi}} \big|\Xi^{(2)}_{r,T,\bi}(y)\big| = o\left(V_T^{r/2}\right)
\quad \hbox{as $T\to\infty$.}
\end{equation}
Together with the assumption (I.a) in Subsection \ref{subsec:data}, 
these estimates imply \eqref{eq:ccc}. 

\subsubsection{Analysis of the separated tuples}

Now we prove the estimate \eqref{2sum} involving separated tuples. 
The crucial ingredient here is the estimates on higher-order correlations
(Theorem \ref{thm_multmix}), which allows to established an estimate on cumulants following our approach from \cite{BG1}. \\

We recall the estimate from  
Proposition \ref{prop_truncsob}(iii) that for every $q \geq 1$, there 
exists $\sigma_q > 0$ such that 
\[
S_q\big(\varphi\circ \aad(u)\big) \ll_q e^{\sigma_q \|u\|} S_q(\varphi)
\quad\quad  \hbox{for all $\varphi \in C_c^\infty(\xd)$ and $u \in \bR^{k-1}$.}
\]
 We may without
loss of generality assume that the map $q \mapsto \sigma_q$ is increasing. Furthermore, we may also assume that the map 
$r \mapsto \delta_r$ in Theorem \ref{thm_multmix} is decreasing. In particular, without loss of generality we can assume that
\begin{equation}\label{eq:1}
\delta_r < r \sigma_q, \quad \textrm{for all $q,r \geq 1$}.
\end{equation}
The following lemma is a corollary of the main technical results from our work \cite{BG1}. 
\vspace{0.2cm}

\begin{lemma}
\label{thm_BG}
There is an integer $q_r \geq 1$, such that for every integer $q > q_r$, there exists a constant $c_{r,q} > 0$, 
with the property that for every $\gamma > 0$ and for all $\varphi_1,\ldots,\varphi_r \in C_c^\infty(\xd)$ and $u^{(1)},\ldots,u^{(r)} \in \bR^{k-1}$ 
with
\[
\max_{j,k} \|u^{(j)}-u^{(k)}\| > c_{r,q} \gamma, 
\]
we have
\[
\left|\cum_{[r]}\left(\varphi_1\circ\aad(u^{(1)}),\ldots,\varphi_r\circ\aad(u^{(r)})\right)\right| \ll_{r,q} e^{-\gamma} \, \prod_{j=1}^r S_{q}(\varphi_j).
\]
\end{lemma}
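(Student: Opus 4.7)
The plan is to deduce this lemma as a direct corollary of the cumulant-mixing framework of \cite{BG1}, fed with the multiple mixing estimate of Theorem \ref{thm_multmix} and the Sobolev-norm translation bound of Proposition \ref{prop_truncsob}(iii). First I would expand, using multilinearity,
\[
\cum_{[r]}\bigl(\varphi_1\circ\aad(u^{(1)}),\ldots,\varphi_r\circ\aad(u^{(r)})\bigr) = \sum_{\mathcal{P}\in\mathfrak{P}_{[r]}} (-1)^{|\mathcal{P}|-1} \prod_{I\in\mathcal{P}} \int_X \prod_{i\in I}\varphi_i\circ\aad(u^{(i)})\,d\mu.
\]
Theorem \ref{thm_multmix} applied with the parameter $|I|$ in place of $r$ replaces each block integral by $\prod_{i\in I}\int_X\varphi_i\,d\mu$ up to an error bounded by $e^{-\delta_{|I|}\,\min_{j\neq k\in I}\|u^{(j)}-u^{(k)}\|}\prod_{i\in I}S_{q_r}(\varphi_i\circ\aad(u^{(i)}))$, which by Proposition \ref{prop_truncsob}(iii) is in turn controlled by $e^{-\delta_{|I|}\min_{j\neq k\in I}\|u^{(j)}-u^{(k)}\| + \sigma_q\sum_{i\in I}\|u^{(i)}\|}\prod_{i\in I}S_q(\varphi_i)$.

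Next, expanding $\prod_{I\in\mathcal{P}}(\textrm{main}+\textrm{error})$ and re-summing over $\mathcal{P}$, the purely product-of-expectation contributions cancel by the algebraic identity built into cumulants (the $r$-cumulant of a family of constants vanishes for $r\ge 2$, as noted in the remark following Definition \ref{def_cumbrr}). What survives is a finite sum of mixed terms, each carrying at least one exponential mixing-decay factor $e^{-\delta_s\cdot(\textrm{block minimum})}$ coming from a block of size $s\ge 2$. The task then is to convert these savings, weighed against the translation exponentials $e^{\sigma_q\|u^{(i)}\|}$, into a uniform $e^{-\gamma}$-bound, using only the hypothesis $\max_{j,k}\|u^{(j)}-u^{(k)}\|>c_{r,q}\gamma$.

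This last step is the main obstacle and precisely the technical content of \cite{BG1}: the mixing estimate produces exponential decay governed by intra-block \emph{minimum} distances, whereas the hypothesis supplies only a single \emph{maximum} pairwise distance across the whole tuple. I would handle this discrepancy via the multiscale clustering argument of \cite{BG1}: partition $\{u^{(1)},\ldots,u^{(r)}\}$ into clusters separated by gaps of size $\gtrsim \gamma$ (after using translation-invariance of $\mu$ to shift one representative to the origin), and observe that for every $\mathcal{P}$, either some block of $\mathcal{P}$ crosses an inter-cluster gap — in which case that block's mixing factor dominates the accumulated Sobolev growth inside the clusters and yields the desired $e^{-\gamma}$ — or $\mathcal{P}$ respects the clustering, and a cluster-wise reapplication of the cumulant cancellation identity kills the contribution. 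Condition \eqref{eq:1} together with the freedom to take $c_{r,q}$ sufficiently large then ensure that the net exponent is at most $-\gamma$, yielding the claimed bound.
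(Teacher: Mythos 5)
Your proposal is correct in outline and takes essentially the same route as the paper: the paper's own proof is likewise a reduction to the clustering-and-cancellation machinery of \cite{BG1} (its Propositions 6.1 and 6.2, i.e.\ the argument of Section 6.4 there), with Theorem \ref{thm_multmix} and Proposition \ref{prop_truncsob}(iii) as inputs and a recursive choice of scales $\beta_{j+1}\delta_r - 3r\sigma_q\beta_j = \gamma$, $\beta_r \le c_{r,q}\gamma$, which is exactly how the decay is made to beat the Sobolev growth and how the constant $c_{r,q}$ arises. The only minor imprecision in your sketch is the dichotomy at the end: the cancellation is not term-by-term for partitions respecting the clustering; rather, every moment is replaced by its cluster-factorized version with a mixing-controlled error, and it is the full sum over partitions of these factorized main terms that vanishes identically.
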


\vspace{0.2cm}

\begin{proof}
The proof  follows the argument in  \cite[Sec. 6.4]{BG1}. 
Let us fix $r \geq 2$, $\gamma > 0$ and an integer $q \geq 1$. 
We define parameters $\beta_0=0$, $\beta_1$,\ldots, $\beta_r$ 
recursively by $\beta_{j+1} \delta_r - 3 r  \sigma_q \beta_j = \gamma$. 
Then because of \eqref{eq:1},
\[
0<\beta_1 < 3\beta_1 < \beta_2 < \cdots < \beta_{r-1} <  3 \beta_{r-1} < \beta_r.
\]
It is also clear from the recursive definition that 
\begin{equation}
\label{induction}
\beta_r \leq c_{r,q} \, \gamma 
\end{equation}
for a constant $c_{r,q}>0$.
Combining \cite[Prop. 6.1]{BG1} and \cite[Prop. 6.2]{BG1},
we conclude that there is an integer $q_r$ such that if $q > q_r$, then 
\[
\left|\cum_{[r]}\left(\varphi_1\circ \aad(u^{(1)}),\ldots,\varphi_r\circ \aad(u^{(r)}) \right)\right| \ll_{r,q} e^{-\gamma} \, \prod_{j=1}^r S_{q}(\varphi_j),
\]
for all $u^{(1)},\ldots,u^{(r)}\in \bR^{k-1}$ such that $\max_{j,k} \|u^{(j)} - u^{(k)}\| > \beta_r$. Together with \eqref{induction}, this proves the lemma.
\end{proof}

Now we apply Lemma \ref{thm_BG} to estimate $\Xi^{(2)}_{r,T,\bi}$, and deduce a criterion for \eqref{2sum}. From now on $q_r$ denotes the integer from Lemma \ref{thm_BG}.

\vspace{0.2cm}

\begin{proposition}
\label{cor_septerms}
Suppose that the parameters $L_T$ and $\gamma_{T,r}$ are chosen so that
for some $q > q_r$, 
\begin{equation}\label{eq:cond2}
L_T^{r(d+1)} \,  V_T^{r/2} \, e^{-c_1 \gamma_{T,r}/c_{r,q}} \,  M^r_{T,q}\to 0,\quad\hbox{as $T\to\infty$},
\end{equation}
where $c_1$ is the positive constant in condition (II.b),
and $c_{r,q}$ is given by Lemma \ref{thm_BG}. 
Then, for every $\bi = (i_1,\ldots,i_r)\in\mathcal{I}^r$,
\[
\sup_{y \in Y_{T,\bi}} \big|\Xi^{(2)}_{r,T,\bi}(y)\big| = o\left(V_T^{r/2}\right), \quad \textrm{as $T \ra \infty$}.
\]
\end{proposition}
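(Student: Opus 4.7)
The plan is to apply Lemma \ref{thm_BG} termwise to each cumulant appearing in the sum \eqref{defsum2} over separated tuples, bound the Sobolev norms of the truncated Siegel transforms $\varphi_{T,i_j}(\cdot, y_{i_j}) = \widehat{f}_{T,i_j}(\cdot, y_{i_j})\,\eta_{L_T}$ by Corollary \ref{cor_truncsob}, and then control the total number of contributing tuples via the counting bound in (II.a). The crucial bridge between the separation we have (namely $\|u^{(j)} - u^{(k)}\| > \gamma_{T,r}$ for some $j \neq k$) and the separation required by Lemma \ref{thm_BG} (pairwise separation of the actual translations $\aad(\beta_{T,i_m}(u^{(m)},y_{i_m}))$) is furnished by the Lipschitz-type estimate \eqref{enumcond1} in hypothesis (II.b).

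Concretely, I would fix $\bi \in \mathcal{I}^r$ and $y \in Y_{T,\bi}$, pick any $\bu \in \widetilde{Q}_{T,\bi} \cap \Delta_r(\gamma_{T,r})^c$, and observe that if $j \neq k$ achieve $\|u^{(j)} - u^{(k)}\| > \gamma_{T,r}$, then \eqref{enumcond1} forces the translated points $v^{(m)} := \beta_{T,i_m}(u^{(m)}, y_{i_m})$ to satisfy
$$\max_{j,k} \|v^{(j)} - v^{(k)}\| \geq c_1 \gamma_{T,r} - c_2.$$
For $T$ large, I would then choose $\gamma$ slightly below $(c_1 \gamma_{T,r} - c_2)/c_{r,q}$ and apply Lemma \ref{thm_BG} with $\varphi_m := \varphi_{T,i_m}(\cdot, y_{i_m})$; the additive shift by $c_2$ contributes only a fixed multiplicative factor $e^{c_2/c_{r,q}}$ that is harmless in the implicit constant. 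This yields a uniform pointwise bound on each cumulant by $e^{-c_1\gamma_{T,r}/c_{r,q}}$ times the product $\prod_m S_q(\varphi_{T,i_m}(\cdot,y_{i_m}))$.

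Next I would invoke Corollary \ref{cor_truncsob} to estimate $S_q(\varphi_{T,i_m}(\cdot, y_{i_m})) \ll L_T^{d+1} M_{T,q}$ uniformly in $y_{i_m}$, giving each individual cumulant a uniform bound of $L_T^{r(d+1)} M_{T,q}^r e^{-c_1 \gamma_{T,r}/c_{r,q}}$. The number of relevant tuples is at most $|\widetilde{Q}_{T,\bi}| \leq \prod_m |\widetilde{Q}_{T,i_m}| \ll V_T^r$ by (II.a), so summing produces
$$\sup_{y\in Y_{T,\bi}}|\Xi^{(2)}_{r,T,\bi}(y)| \ll V_T^r L_T^{r(d+1)} M_{T,q}^r e^{-c_1 \gamma_{T,r}/c_{r,q}}.$$
Dividing through by $V_T^{r/2}$ and invoking assumption \eqref{eq:cond2} completes the argument.

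I do not anticipate a serious obstacle; the proof is essentially a bookkeeping application of the cumulant decay estimate Lemma \ref{thm_BG} together with the Sobolev norm estimate Corollary \ref{cor_truncsob} and the combinatorial count (II.a). The only mildly delicate points are tracking that Lemma \ref{thm_BG} is being applied uniformly over $y \in Y_{T,\bi}$ and that the crude counting bound $V_T^r$ for tuples (rather than something finer using the polynomial growth \eqref{QTgrowth}) suffices here because the exponential factor $e^{-c_1\gamma_{T,r}/c_{r,q}}$ is what drives the decay in \eqref{eq:cond2}.
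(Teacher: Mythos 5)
Your proposal is correct and follows essentially the same route as the paper: it converts the $\gamma_{T,r}$-separation of the $u^{(m)}$'s into separation of the translates via \eqref{enumcond1}, applies Lemma \ref{thm_BG} with $\gamma \asymp (c_1\gamma_{T,r}-c_2)/c_{r,q}$, bounds the Sobolev norms by $L_T^{d+1}M_{T,q}$ via Corollary \ref{cor_truncsob}, and uses the crude count $|\widetilde{Q}_{T,\bi}|\ll V_T^r$ before invoking \eqref{eq:cond2}. No gaps; this is the paper's argument.
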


\vspace{0.2cm}

\begin{proof}
We first note 
that if $(u^{(1)},\ldots,u^{(r)})$ belongs to $\widetilde{Q}_{T,\bi} \cap \Delta_r(\gamma_{T,r})^c$, then by condition (II.b),
there exist $i_m,i_n \in \cI$ such that 
\[
 \big\|\beta_{T,i_m}(u^{(m)},y_{i_m}) - \beta_{T,i_n}(u^{(n)},y_{i_n})\big\| 
> c_1 \gamma_{T,r} - c_2,
\]
for all $y_{i_m} \in Y_{T,i_m}$ and $y_{i_n} \in Y_{T,i_n}$.
Applying Lemma \ref{thm_BG} with $\gamma$ defined by 
$c_1 \, \gamma_{T,r} - c_2 = c_{r,q} \gamma,$ we deduce that
\[
\left|\cum_{[r]}\left(\varphi_{T,i_1}\circ \aad\big(\beta_{T,i_1}(u^{(1)},y_{i_1})\big),\cdots,
\varphi_{T,i_r}\circ \aad\big(\beta_{T,i_r}(u^{(r)},y_{i_r})\big)\right)\right| 
\]
is estimated by
\[
\ll_{r,q} 
 e^{-c_1 \gamma_{T,r}/c_{r,q}} \prod_{m=1}^r S_q(\varphi_{T,i_m}),
\]
where we in the last $\ll$-sign have absorbed the $e^{-c_2/c_{r,q}}$-factor. We recall that
\[
\varphi_{T,i}(\Lambda,y_i) = \widehat{f}_{T,i}(\Lambda,y_i) \eta_{L_T}(\Lambda).
\]
By Corollary \ref{cor_truncsob}, 
\[
S_{q}\big(\varphi_{T,i}(\cdot,y_i)\big) \ll_{\cK,q} L_T^{d+1} \, \big\|f_{T,i}(\cdot,y_i)\big\|_{C^q},
\]
where $\cK \subset \bR^d$ is a fixed compact set which contains all of the supports of the functions $x \mapsto f_{T,i}(x,y_i)$ as $y_i$ ranges
over $Y_{T,i}$. We conclude that
\begin{align*}
\sup_{y \in Y_{T,\bi}} \big|\Xi^{(2)}_{r,T,\bi}(y)\big| &\ll_{r,\cK,q} \left(\prod_{m=1}^r |\widetilde{Q}_{T,i_m}| \right) \, e^{-c_1 \gamma_{T,r}/c_{r,q}} \, L_T^{r(d+1)} M_{T,q}^r\\
&= V_T^r \, e^{-c_1 \gamma_{T,r}/c_{r,q}} \, L_T^{r(d+1)} M_{T,q}^r.
\end{align*}
This implies the proposition.
\end{proof}

\subsubsection{Analysis of the clustered tuples}

Next, we deal with the clustered tuples. Our analysis here is one of the main novelties of this paper. 
We stress that we do \emph{not} assume that the maps $T \mapsto \|f_{T,i}\|_\infty$ 
are bounded (otherwise, our analysis could have been carried out as in \cite{BG1}).
This is also where the assumption (I.c) becomes crucial.
This condition says 
roughly that the $\kappa_{T,i}$-integrals of $f_{T,i}$ are bounded functions. The main purpose of this subsection is to explain how this "bounded on average"-condition can be used to derive \eqref{1sum}. 
\vspace{0.2cm}

\begin{proposition}
\label{prop_new}
Suppose that the parameters $L_T$ and $\gamma_{T,r}$ satisfy for some $\varepsilon>0$,
\begin{equation}\label{eq:cond3}
L_T^{r-d+\varepsilon}\, V_T^{1-r/2}\, \gamma_{T,r}^{(r-1)(k-1)}   \to 0,\quad\hbox{as $T\to\infty$}.
\end{equation}
Then,
\[
\int_{Y_{T,\bi}} \big|\Xi^{(1)}_{r,T,\bi}(y)\big| \, d\kappa_{T,\bi}(y) =o\left(V_T^{r/2}\right), \quad \textrm{as $T \ra \infty$}.
\]
\end{proposition}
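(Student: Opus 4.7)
The plan is to expand each cumulant in \eqref{defsum1} by the partition formula of Definition \ref{def_cumbrr}, integrate in $y \in Y_{T,\bi}$ factor-wise over the partition, and then exploit the ``bounded on average'' assumption (II.c) together with the truncation $\eta_{L_T}$ to reduce the estimate to a truncated moment of $\alpha$ on $X$. The main novelty, stressed in the paragraph preceding the proposition, is that $\|f_{T,i}(\cdot,y_i)\|_\infty$ is \emph{not} assumed to be bounded in $T$, so one cannot estimate any summand individually by $\|\phi^{(u)}_m\|_\infty$; the $y_i$-averaging must be exploited \emph{before} any Siegel- or H\"older-type inequality is applied.

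First I count clustered tuples. Condition (II.a) gives $|\widetilde Q_{T,i_1}|\ll V_T$ and at most $\gamma_{T,r}^{k-1}$ elements of each remaining $\widetilde Q_{T,i_m}$ inside $B(u^{(1)},\gamma_{T,r})$, so $|\widetilde Q_{T,\bi}\cap \Delta_r(\gamma_{T,r})|\ll V_T\,\gamma_{T,r}^{(r-1)(k-1)}$. Setting $\phi^{(u)}_m:=\varphi_{T,i_m}(\cdot,y_{i_m})\circ \aad(\beta_{T,i_m}(u^{(m)},y_{i_m}))$, I expand $\cum_{[r]}(\phi^{(u)}_1,\ldots,\phi^{(u)}_r)$ by partitions. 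By the triangle inequality, the product structure $Y_{T,\bi}=\prod_m Y_{T,i_m}$, and the fact that distinct blocks of any partition depend on disjoint $y$-coordinates, we get
\[
\int_{Y_{T,\bi}}\!\big|\cum_{[r]}(\phi^{(u)}_1,\ldots,\phi^{(u)}_r)\big|\,d\kappa_{T,\bi} \;\leq\; \sum_{\cP\in\mathfrak{P}_{[r]}}\prod_{I\in\cP}\int_X\prod_{m\in I}\psi^{(u)}_m\,d\mu,
\]
where $\psi^{(u)}_m(\Lambda):=\int_{Y_{T,i_m}}\phi^{(u)}_m(\Lambda,y_{i_m})\,d\kappa_{T,i_m}(y_{i_m})$ is non-negative by $\phi\ge 0$ and Fubini.

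The heart of the argument is a $T$-uniform bound on $\psi^{(u)}_m$. Corollary \ref{perturbetaL} combined with \eqref{enumcond2} produces a constant $B>0$, independent of $T,u,y$, such that $\eta_{L_T}\circ \aad(\beta_{T,i_m}(u,y))\leq \chi_{\{\alpha\circ \aad(\tilde\beta_{T,i_m}(u))\le B L_T\}}$. Combining this with (II.c), Fubini in $(y,z)$, and the Siegel bound $\widehat H_{T,i}\ll \alpha$ (uniform in $T,i$, since by (II.c) both $\|H_{T,i}\|_\infty$ and $\supp H_{T,i}\subset \cK'$ are uniformly controlled), I obtain
\[
\psi^{(u)}_m(\Lambda)\;\ll\; \alpha_m(\Lambda)\,\chi_{\{\alpha_m(\Lambda)\le B L_T\}},\qquad \alpha_m:=\alpha\circ \aad(\tilde\beta_{T,i_m}(u^{(m)})).
\]
Generalised H\"older in $L^{|I|}$ and $\mu$-invariance of the $\aad$-action then give $\int_X\prod_{m\in I}\psi^{(u)}_m\,d\mu\ll \int_X\alpha^{|I|}\chi_{\{\alpha\le B L_T\}}\,d\mu$, which Lemma \ref{l:alpha} together with the tail estimate $\mu\{\alpha>t\}\ll t^{-d+\varepsilon}$ (from Markov) bounds by $L_T^{\max(|I|-d,0)+\varepsilon}$. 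Summing over the finitely many partitions of $[r]$ (the single-block partition dominates whenever $r\ge d$) gives the per-tuple bound $L_T^{\max(r-d,0)+\varepsilon}$, and combining with the tuple count yields
\[
\int_{Y_{T,\bi}}\big|\Xi^{(1)}_{r,T,\bi}(y)\big|\,d\kappa_{T,\bi} \;\ll\; V_T\,\gamma_{T,r}^{(r-1)(k-1)}\,L_T^{\max(r-d,0)+\varepsilon},
\]
which is $o(V_T^{r/2})$ by the hypothesis \eqref{eq:cond3}.

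The delicate point is the passage from $\phi^{(u)}_m$ to $\psi^{(u)}_m$: this is precisely where (II.c) enters, trading the possibly $T$-unbounded quantity $\|f_{T,i}(\cdot,y_i)\|_\infty$ for the $T$-uniform function $\widehat H_{T,i}\ll \alpha$. Everything after that step is routine---Siegel bound, generalised H\"older, and a truncated-moment estimate for $\alpha$---and the explicit factor $L_T^{r-d+\varepsilon}$ in \eqref{eq:cond3} is exactly what is needed to absorb the polynomial blow-up of the $\alpha$-moment in the regime $|I|>d$.
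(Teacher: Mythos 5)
Your proof is correct and follows essentially the same route as the paper: expand the cumulant over partitions, integrate in $y$ \emph{before} any sup-norm estimate (Tonelli across the disjoint block coordinates), use (II.c) with the Siegel bound of Lemma \ref{Siegelbnd} to dominate the $y$-averaged, possibly $T$-unbounded Siegel transforms by $\alpha\circ \aad(\widetilde\beta_{T,i}(u))$, use Corollary \ref{perturbetaL} together with \eqref{enumcond2} to convert the truncation $\eta_{L_T}$ into $\chi_{\{\alpha\le BL_T\}}$, and finish with generalized H\"older, the (truncated) moments of $\alpha$ from Lemma \ref{l:alpha}, and the clustered-tuple count $\ll V_T\,\gamma_{T,r}^{(r-1)(k-1)}$ from (II.a). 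The only differences are cosmetic bookkeeping: the paper works with a single majorant $\psi_T=\widehat h\,\chi_{\{\alpha\le BL_T\}}$, $h=\sup_{T,i}\int_{Y_{T,i}}h_{T,i}\,d\kappa_{T,i}$, and records the per-tuple exponent as $L_T^{r-d+\varepsilon}$, where your $\max(r-d,0)$ (versus the stated hypothesis \eqref{eq:cond3}) makes no difference in the regime in which the proposition is actually applied.
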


\vspace{0.2cm}

\begin{proof}
Expanding the definition of the cumulant in \eqref{defsum1}, we deduce that
\begin{equation}\label{eq:xi1}
\big|\Xi^{(1)}_{r,T,\bi}(y)\big|\ll_r \max_{\cP}
\sum_{\bu \in \widetilde{Q}_{T,\bi} \cap \Delta_r(\gamma_{T,r})}
\prod_{I\in \cP}\int_X\left(\prod_{k\in I} \varphi_{T,i_k}\left(\aad\big(\beta_{T,i_k}(u^{(k)},y_{i_k})\big)\Lambda, y_{i_k}\right)\right)	\, d\mu(\Lambda).
\end{equation}	
We recall that
\[
\varphi_{T,i}(\Lambda,y_i) = \widehat{f}_{T,i}(\Lambda,y_i) \eta_{L_T}(\Lambda).
\]
By condition (II.c), there exist Borel functions $h_{T,i} : \bR^d \times Y_{T,i} \ra [0,\infty)$ such that
\[
{f}_{T,i}\big(\aad\big(\beta_{T,i}(u,y_i)\big)z,y_i\big) \leq {h}_{T,i}\big(\aad\big(\widetilde{\beta}_{T,i}(u)\big)z,y_i\big),
\]
for all $u \in \widetilde{Q}_{T,i}$, $z\in\mathbb{R}^d$, and $y_i\in Y_{T,i}$. Hence, setting
\[
h(z) := \sup_{T,i}\int_{Y_{T,i}} h_{T,i}(z,y_i) \, d\kappa_{T,i}(y_i),
\]
we deduce that
\begin{align*}
\int_{Y_{T,i}} \widehat{f}_{T,i}\big(\aad\big(\beta_{T,i}(u,y_i)\big)\Lambda,y_i\big)
\, d\kappa_{T,i}(y_i)&=
\sum_{z\in\Lambda\backslash \{0\}}\int_{Y_{T,i}} {f}_{T,i}\big(\aad\big(\beta_{T,i}(u,y_i)\big)z,y_i\big)
\, d\kappa_{T,i}(y_i)\\
&\le 
 \widehat{h}\big(\aad\big(\widetilde{\beta}_{T,i}(u)\big)\Lambda\big).
\end{align*}
We recall that according condition (II.c), the function $h$ 
is uniformly bounded and its support is contained in a fixed compact
set. In particular, it follows from Lemma \ref{Siegelbnd} that 
\begin{equation}\label{eq:Gb}
\widehat{h}(\Lambda) \ll \alpha(\Lambda), \quad \textrm{for all $\Lambda \in \xd$}.
\end{equation}
By condition (II.b), there is a fixed compact set $\cC \subset \abd$ such that
\[
\aad\left(\beta_{T,i}(u,y_i) - \widetilde{\beta}_{T,i}(u)\right) \in \cC, \quad \textrm{for all $u \in \widetilde{Q}_{T,i}$, $y_i \in Y_{T,i}$, and $T>0$}.
\]
By Corollary \ref{perturbetaL}, there is a constant $B=B({\cC}) > 0$ such that
\[
\eta_{L_T} \circ g \leq \chi_{ \{\alpha \leq B\, L_T \}} \quad \textrm{for all $T$ and $g \in \cC$},
\]
whence
\begin{eqnarray*}
\eta_{L_T}\big(\aad\big(\beta_{T,i}(u,y_i)\big) \Lambda\big) 
&=&  \eta_{L_T}\left(\aad\big(\beta_{T,i}(u,y_i) - \widetilde{\beta}_{T,i}(u)\big) \aad\big(\widetilde{\beta}_{T,i}(u)\big) \Lambda \right) \\
&\leq & 
\chi_{ \{ {\alpha} \leq B\, L_T \}}\left(\aad\big(\widetilde{\beta}_{T,i}(u)\big) \Lambda\right).
\end{eqnarray*}
Combining the above estimates, we conclude that
\[
\int_{Y_{T,i}} \varphi_{T,i}\left(a\big (\beta_{T,i}(u,y_i)\big) \Lambda ,y_i\right) \, d\kappa_{T,i}(y_i)\le \psi_T\left(a\big(\widetilde{\beta}_{T,i}(u)\big)\Lambda\right),
\]
where $\psi_T$ is defined by 
\begin{equation}\label{eq:ht} 
\psi_T(\Lambda):= \widehat{h}(\Lambda)\, \chi_{ \{ \alpha  \leq B\, L_T \}}(\Lambda),\quad\hbox{for $\Lambda\in X$.}
\end{equation}
Therefore, we deduce from \eqref{eq:xi1} that
\begin{equation}\label{eq:xi2}
\int_{Y_{T,\bi}} \big|\Xi^{(1)}_{r,T,\bi}(y)\big| \, d\kappa_{T,\bi}(y) \ll_r \max_{\cP}
\sum_{\bu \in \widetilde{Q}_{T,\bi} \cap \Delta_r(\gamma_{T,r})}
\prod_{I\in \cP}\int_X\left(\prod_{k\in I} \psi_{T}\left(\aad\big(\tilde\beta_{T,i_k}(u^{(k)})\big)\Lambda\right)\right)	\, d\mu(\Lambda).
\end{equation}	
We observe that it follows from \eqref{eq:Gb}, \eqref{eq:ht}, and Lemma \ref{l:alpha} that
\begin{equation}\label{eq:h1}
\sup|\psi_T|=O(L_T)\quad\hbox{and}\quad \|\psi_T\|_{L^p(X)}=O_p(1)\;\;\hbox{for $p<d$.}
\end{equation}
In particular, it also follows that for $p\ge d$, 
$$
\|\psi_T\|_{L^p(X)}=O_{p,q}\left(L_T^{1-q/p}\right)\quad\hbox{for all $q<d$.}
$$
According to the general H\"older inequality,
for exponents $p_k\in (1,\infty]$ satisfying $\sum_{k} 1/p_k=1$,
$$
\int_X\left(\prod_{k\in I} \psi_{T}\left(\aad\big(\tilde\beta_{T,i_k}(u^{(k)})\big)\Lambda\right)\right)	\, d\mu(\Lambda)\le \prod_{k\in I} \left\|\psi_T\circ \aad\big(\tilde\beta_{T,i_k}(u^{(k)})\big)\right\|_{L^{p_k}(X)}= \prod_{k\in I} \|\psi_T\|_{L^{p_k}(X)}.
$$
Therefore, when $|I|<d$, 
$$
\int_X\left(\prod_{k\in I} \psi_{T}\left(\aad\big(\tilde\beta_{T,i_k}(u^{(k)})\big)\Lambda\right)\right)	\, d\mu(\Lambda)=O(1),
$$
and when $|I|\ge d$,
$$
\int_X\left(\prod_{k\in I} \psi_{T}\left(\aad\big(\tilde\beta_{T,i_k}(u^{(k)})\big)\Lambda\right)\right)	\, d\mu(\Lambda)=O_\varepsilon \left(L_T^{|I|-d+\varepsilon}\right)\quad\hbox{for all $\varepsilon>0$.}
$$
We conclude that for every partition $\cP$,
$$
\prod_{I\in \cP}\int_X\left(\prod_{k\in I} \psi_{T}\left(\aad\big(\tilde\beta_{T,i_k}(u^{(k)})\big)\Lambda\right)\right)	\, d\mu(\Lambda)=O_\varepsilon \left(L_T^{r-d+\varepsilon}\right),
$$
and from \eqref{eq:xi1},
$$
\int_{Y_{T,\bi}} \big|\Xi^{(1)}_{r,T,\bi}(y)\big| \, d\kappa_{T,\bi}(y)
\ll_{r,\varepsilon} \left|\widetilde{Q}_{T,\bi} \cap \Delta_r(\gamma_{T,r})\right|\, L_T^{r-d+\varepsilon}.
$$
Since 
\[
\left|\widetilde{Q}_{T,\bi} \cap \Delta_r(\gamma_{T,r})\right| \leq
\sum_{u\in \widetilde{Q}_{T,i_1}}  \prod_{k=2}^{r} \left|\widetilde{Q}_{T,i_k} \cap \{v:  \|v-u\| \leq \gamma_{T,r} \}\right|,
\]
it follows from condition (II.a) that
\[
\left|\widetilde{Q}_{T,\bi} \cap \Delta_r(\gamma_{T,r})\right| \ll V_T \gamma_{T,r}^{(k-1)(r-1)},
\]
whence
\[
\int_{Y_{T,\bi}} \left|\Xi^{(1)}_{r,T}(y)\right| \, d\kappa_{T,i}(y) \ll_{r,\varepsilon} V_T \gamma_{T,r}^{(k-1)(r-1)} L_T^{r-d+\varepsilon} ,
\]
for all $\varepsilon>0$, which implies the assertion of the proposition. 
\end{proof}

\subsection{Main result}
\label{subsec:summary}

In this section, we finally prove convergence in distribution of the functions
\[
\Upsilon_T(\Lambda)= V_T^{-1/2}\left(\widehat{F}_T(\Lambda) - \int_{\xd} \widehat{F}_T \, d\md\right),
\]
where
$$
\widehat{F}_T(\Lambda) = \sum_{i\in\mathcal{I}} \int_{Y_{T,i}} \Big( \sum_{a \in Q_{T,i}(y_i)} \widehat{f}_{T,i}(a\Lambda,y_i) \Big) \, d\kappa_{T,i}(y_i).
$$
We recall that the data in this formula satisfy the conditions (I.a)--(I.c)
and (II.a)--(II.c). We further put an additional condition on the norms of
the functions $f_{T,i}$, using the notation introduced in \eqref{defM0_0}--\eqref{defM0}. \\

The main result of Section \ref{sec:general} is the following theorem:

\vspace{0.2cm}

\begin{theorem}\label{thm_criterion}
Suppose that 
\vspace{0.1cm}
\begin{itemize}
	\item There exists $\theta_0 > 0$ such that 
  $$
M_{T} = O\left(V_T^{\theta_0}\right).
$$
	
	\item For $q \geq 1$, there exists $\theta_q > 0$ such that 
	$$
	M_{T,q} = O\left(V_T^{\theta_q}\right).
	$$
	\item The limit 
	$$
	\sigma:=\lim_{T\to\infty} \|\Upsilon_T\|_{L^2(X)}
	$$ exists and is finite.
\end{itemize}
\vspace{0.1cm}
If $d > 4(1 + \theta_0)$, then the functions $\Upsilon_T$ on $(X,\mu)$
 converge in distribution to the Normal Law with variance $\sigma$.
\end{theorem}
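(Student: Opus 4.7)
The plan is to verify the Fr\'echet--Shohat cumulant criterion (Proposition \ref{prop_cumulants}) for the sequence $\Psi_T := V_T^{-1/2}\bigl(\Phi_T - \int_{X}\Phi_T\, d\md\bigr)$ defined in \eqref{eq:psi}, and then transfer the conclusion to $\Upsilon_T$ via Lemma \ref{lemma_trunc}. Concretely, if I choose $L_T$ and $\gamma_{T,r}$ so that conditions \eqref{eq:cond1} and \eqref{eq:cond1_1} hold (forcing $\Upsilon_T-\Psi_T\to 0$ in $L^2(X)$, hence $\|\Psi_T\|_{L^2}\to \sigma$ and equality of the distributional limits of $\Upsilon_T$ and $\Psi_T$), and so that $\cum_r(\Phi_T)=o(V_T^{r/2})$ for every $r\geq 3$ (hence $\cum_r(\Psi_T)=V_T^{-r/2}\cum_r(\Phi_T)\to 0$), then the criterion applies. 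The centering hypothesis $\int_X \Psi_T\, d\md = 0$ is automatic.

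The central choice is to set $L_T:=V_T^\rho$ for a single exponent $\rho$ lying in the interval $\bigl((1/2+\theta_0)/(d/2-1-\eps),\ 1/2\bigr)$, which is non-empty precisely because of the hypothesis $d>4(1+\theta_0)$, provided $\eps>0$ is taken small enough. With $M_T=O(V_T^{\theta_0})$, condition \eqref{eq:cond1} becomes $V_T^{-\rho(d/2-1-\eps)+1/2+\theta_0}\to 0$, which is secured by the lower bound on $\rho$; condition \eqref{eq:cond1_1} is stronger in $L_T$ and follows similarly. This completes the truncation step and shows that $\|\Psi_T\|^2_{L^2(X)}\to \sigma^2$.

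For each fixed $r\geq 3$, I will take $\gamma_{T,r}:=C_r\log V_T$ with $C_r>0$ to be chosen, and split the cumulant via \eqref{cumrexp} and \eqref{defsum1}--\eqref{defsum2}. Using $M_{T,q}=O(V_T^{\theta_q})$, the hypothesis \eqref{eq:cond2} of Proposition \ref{cor_septerms} reads $V_T^{\rho r(d+1)+r/2+r\theta_q - c_1 C_r/c_{r,q}}\to 0$, which I arrange by taking $C_r$ large enough depending on $r$. The hypothesis \eqref{eq:cond3} of Proposition \ref{prop_new} becomes $V_T^{\rho(r-d+\eps)+1-r/2}(\log V_T)^{(r-1)(k-1)}\to 0$. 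For $3\leq r\leq d$ this exponent is negative for small $\eps$; for $r>d$ it is equivalent to $\rho<(r/2-1)/(r-d+\eps)$, and a short computation shows that this function of $r$ is decreasing on $(d,\infty)$ with limit $1/2$, so the upper bound $\rho<1/2$ makes the inequality hold uniformly in $r>d$.

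The hard part is precisely this simultaneous compatibility of the constraints on $\rho$: the truncation step forces $\rho>(1/2+\theta_0)/(d/2-1-\eps)$ from below, while controlling the clustered cumulants as $r\to\infty$ forces $\rho<1/2$ from above, and the two are compatible exactly when $d>4(1+\theta_0)+2\eps$. Once $\eps$, $\rho$, and the $C_r$ are fixed in this way, Propositions \ref{cor_septerms} and \ref{prop_new} deliver $\cum_r(\Phi_T)=o(V_T^{r/2})$ for every $r\geq 3$, and Proposition \ref{prop_cumulants} concludes the distributional convergence of $\Psi_T$, hence of $\Upsilon_T$, to the Normal Law with variance $\sigma$.
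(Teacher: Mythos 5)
Your proposal is correct and follows essentially the same route as the paper: truncation via Lemma \ref{lemma_trunc}, the Fr\'echet--Shohat criterion, and the cumulant estimates of Propositions \ref{cor_septerms} and \ref{prop_new}, with the identical parameter choice $L_T=V_T^{\rho}$, $\gamma_{T,r}\asymp\log V_T$ and the same compatibility window $\frac{1+2\theta_0}{d-2-2\eps}<\rho\le\frac12$ forcing $d>4(1+\theta_0)$. Your explicit monotonicity check that $(r/2-1)/(r-d+\eps)$ decreases to $1/2$ for $r>d$ merely fills in a detail the paper asserts without comment.
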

 
 \vspace{0.2cm}
 
\begin{proof}
We shall use Proposition \ref{prop_cumulants}. 	
We recall that by Lemma \ref{lemma_trunc}, the functions $\widehat{F}_T$
can be approximated by functions 
\[
\Phi_{T}(\Lambda) := \sum_{i\in \cI} \int_{Y_{T,i}} \Big( \sum_{a \in Q_{T,i}(y_i)} \varphi_{T,i}(a\Lambda,y_i) \Big) \, d\kappa_{T,i}(y_i),
\]
so that
$$
\left\|\widehat{F}_T-\Phi_T\right\|_{L^2(X)}=o\left(V_T^{1/2}\right).
$$
This implies that the functions
\[
\Psi_T(\Lambda)= V_T^{-1/2}\left(\Phi_T(\Lambda) - \int_{\xd} \Phi_T \, d\md\right)
\]
satisfy
$$
\Big\|\Upsilon_T-\Psi_T\Big\|_{L^2(X)}\to 0.
$$
Then, in particular, 
$\lim_{T\to\infty} \|\Psi_T\|_{L^2(X)}=\sigma$.
It also follows that if $\Psi_T$ converges in distribution to the 
Normal Law, so does $\Upsilon_T$. Hence, it remains to verify
that the conditions of Proposition \ref{prop_cumulants} hold from the functions $\Psi_T$, namely, that
$$
\cum_{r}(\Psi_T)=V_T^{-r/2}\cum_{[r]}(\Phi_T)\to 0\quad\hbox{for all $r\ge 3$.}
$$
Since the later cumulant can be expressed as \eqref{cumrexp},
this will follow from Proposition \ref{cor_septerms} and Proposition \ref{prop_new}. 

Now it remains to choose the parameters 
$L_T$ and $\gamma_{T,r}$ so that the conditions in Lemma \ref{lemma_trunc}, Proposition \ref{cor_septerms}, and Proposition \ref{prop_new} are satisfied. To 
do this, we shall take 
\begin{equation}
\label{chooseLTgammaTr}
L_T = V_T^{\rho} \qand \gamma_{T,r} = S_{r} \log V_T,
\end{equation}
where $\rho$ and $S_r$ are positive real numbers, which will be chosen later. 
The condition \eqref{eq:cond1} in Lemma \ref{lemma_trunc} is satisfied if $\rho$ is chosen so that for some $\eps>0$
\[
V_T^{\rho(-d/2+1+\eps) + 1/2 +\theta_0} \ra 0,
\]
or equivalently, if
\begin{equation}
\label{ass1}
\rho > \frac{1 + 2\theta_0}{d-2-2\eps}.
\end{equation}
We write $q_r$ for the index introduced in Lemma \ref{cumrexp} and fix an integer $q > q_r$. 
The condition \eqref{eq:cond2} in Proposition \ref{cor_septerms} is satisfied if 
\[
V_T^{\rho r(d+1) + r/2- \frac{c_1 S_r}{c_{r,q}} +  r \theta_q} \ra 0,
\]
which can always be arranged by choosing $S_r$ large enough, depending on $r,\rho, d$. Finally, the condition \eqref{eq:cond3} in Proposition \ref{prop_new}
is satisfied if we choose the constants $\rho$ and $S_r$ such that 
for some $\varepsilon>0$,
\[
V_T^{\rho (r-d+\varepsilon) + 1 -r/2} (S_r\, \log V_T)^{(r-1)(k-1)} \ra 0.
\]
This holds provided that
\begin{equation}
\label{ass2}
\rho (r-d+\varepsilon)< r/2 -1.
\end{equation}
Hence, it is sufficient to choose $\rho$ so that both \eqref{ass1} and \eqref{ass2} hold for all $r\ge 3$.
This is possible provided that 
\[
\frac{1 + 2\theta_0}{d-2-2\eps} < \rho \le \frac{1}{2}.
\]
Since $\eps > 0$ is arbitrary, this argument works provided that 
$d > 4 + 4 \theta_0$.
\end{proof}

\begin{remark}
In order to proceed with the proof above it is sufficient to have that 
\begin{equation}\label{eq:ss1}
\Big\|\Upsilon_T-\Psi_T\Big\|_{L^1(X)}\to 0
\end{equation}
and 
\begin{equation}\label{eq:ss2}
\lim_{T\to\infty} \big\|\Psi_T\big\|_{L^2(X)}=\sigma.
\end{equation}
According to Lemma \ref{lemma_trunc}, condition \eqref{eq:ss1} holds under assumption \eqref{eq:cond1_1}. This assumption is weaker than \eqref{eq:cond1}, so that we can replace \eqref{ass1} by the assumption
\begin{equation}
\label{ass1_1}
\rho > \frac{1 + 2\theta_0}{2d-2-2\eps}.
\end{equation}
Then the argument can be carried out when $d > 2(1 +  \theta_0)$, provided that we 
can establish \eqref{eq:ss2} independently.
\end{remark}

\section{Proof of the main theorem }
\label{sec:smoothapprox}

In this section, we prove our main theorem (Theorem \ref{main}).
We recall that our goal is to analyze the lattice counting function for the domains  
\begin{equation}
\label{OmegaT}
\Omega_T =\Omega_T(I,B)= \big\{ z \in \bR^{d}: \, N_L(z) \in I, \enskip \td_L(z) \in B\;\;\hbox{and}\;\; 0<\|L_1(z)\|,\ldots, \|L_k(z)\| <T \big\}.
\end{equation}
Ultimately, we will construct an approximation of the characteristic function $\chi_{\Omega_T}$ by functional averages of the
form \eqref{FT0} and show that these functional averages satisfy the assumptions of Theorem \ref{thm_criterion}, so that 
Theorem \ref{main} will be a consequence of Theorem \ref{thm_criterion}.
This is a tedious and rather technical task, so it
might be beneficial for the reader to first take a look in Subsection \ref{subsec:punchline}, where the main objects of the section 
are summarized, and the most important verifications are indexed.  

\subsection{A basic reduction}

Let $L_j : \bR^{d} \ra \bR^{d_j}$ with $j=1,\ldots,k$, $I\subset (0,\infty)$, and $B \subset \bS_{\bd}$ be the objects defining the sets $\Omega_T$.
We also consider the basic domains 
\begin{equation}
\label{defOT}
\Omega^0_T (I,B) := \big\{ z \in \bR^{d} \, \mid \, N(z) \in I, \enskip \td(z) \in B \qen 0<\|z_1\|,\ldots, \|z_k\| < T \big\},
\end{equation}
where
\begin{equation}
\label{defNdThetad}
N(z): = \prod_{j=1}^k \|z_j\|^{d_j} \qand \td(z): = \Big( \frac{z_1}{\|z_1\|}, \ldots, \frac{z_k}{\|z_k\|} \Big).
\end{equation}
Then $\Omega_T = L^{-1}(\Omega^0_T)$ 
for the invertible linear map $L=(L_1,\ldots,L_k)$.
Let us write $L=cL_0$ with $c\in \bR^\times $ and $\det(L_0)=1$. Then
\[
\Omega_T = L_0^{-1}\big(\sgn(c)|c|^{-1/d}\Omega^0_T(I,B)\big) = L_0^{-1}\big(\Omega^0_T(|c|^{-1}I,\sgn(c)B)\big).
\]
Therefore, for any lattice $\Lambda$,
$$
\big| \Lambda \cap \Omega_T|=\big| L_0(\Lambda) \cap  \Omega^0_T(|c|^{-1}I,\sgn(c)B)\big|,
$$
and
$$
\hbox{Vol}\big(\Omega^0_T(|c|^{-1}I,\sgn(c)B)\big)=\hbox{Vol}\big(\Omega^0_T(I,B)\big).
$$
Since the measure on the space of lattice is invariant under $L_0$,
it is sufficient to analyze the distribution of the function
$\Lambda\mapsto | \Lambda \cap \Omega^0_T| - \hbox{Vol}(\Omega^0_T)$.

\medskip

{\it From now on we assume that the sets $\Omega_T=\Omega_T(I,B)$ are defined by \eqref{defOT}, where $I$ is a non-empty bounded interval in $(0,\infty)$, 
and $B$ is a Borel subset of $\bS_{\bd}$ with positive measure.}

\subsection{A coodinate system}
\label{subsec:changeofvar}

The sets $\Omega_T$ are more conveniently studied in a different coordinate system which we now introduce. 
We use notations
$$
\bR_{*}^{\bd}:={\prod}_{j=1}^k \bR^{d_j}\backslash \{0\}\quad\quad\hbox{and} \quad\quad  \bS_{\bd} := {\prod}_{j=1}^k S^{d_j-1}.
$$
Let 
\begin{equation}
\label{defphi}
\pid : \bR_{*}^{\bd} \longrightarrow \bR^{k-1} \times \bR \times \bS_{\bd}: \enskip z \mapsto \big(\ud(z),\sd(z),\td(z)\big),
\end{equation}
where 
\begin{align*}
\ud(z) & := \big(\log \|z_1\|,\ldots,\log \|z_{k-1}\| \big),\\
\sd(z) &:= \log N(z) = \sum_{j=1}^k d_j \log \|z_j\|,  \\
\td(z) &:= \Big( \frac{z_1}{\|z_1\|}, \ldots, \frac{z_k}{\|z_k\|} \Big).
\end{align*}
It is readily checked that the map $\pid$ is equivariant
with respect to the group $A$ defined in \eqref{defb} in the following sense:
\begin{equation}
\label{equiad}
\pid(\aad(u)z) = (\ud(z) + u,\sd(z),\td(z)), \quad \textrm{for all $u\in\bR^{k-1}$ and $x \in \bR_*^{\bd}$},
\end{equation}
and that the inverse map $\pid^{-1}$ is given by
\begin{equation}
\label{defpsi}
\pid^{-1}(u,s,\xi) = \left(e^{u_1}\xi_1,\ldots,e^{u_{k-1}} \xi_{k-1},e^{\big(s-\sum_{j=1}^{k-1} d_j u_j\big)/d_{k}} \xi_k \right).
\end{equation}
If one computes the Jacobian of this inverse map, the following lemma emerges:
\vspace{0.2cm}
\begin{lemma}
\label{lemma_volumeint}
 For every bounded Borel function $f : \bR^{k-1} \times \bR \times \bS_{\bd} \ra \bR$ with bounded support, 
\[
\int_{\bR_*^{\bd}} f(\pid(z)) \, dz = \frac{1}{d_k} \int_{\bS_{\bd}} \int_{\bR} \Big( \int_{\bR^{k-1}} f(u,s,\xi) \, du \Big) \, e^s \, ds \, d\kd(\xi).
\]
Here $dz$ denote the volume element on $\bR^d$ which assigns volume one to the unit cube, $du$ is the volume element on $\bR^{k-1}$
such that the unit cube in $\bR^{k-1}$ has volume one,
and the measure $\kappa$ is defined in \eqref{eq:k}.
\end{lemma}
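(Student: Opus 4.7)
The plan is to carry out the change of variables defined by $\pid$ in two stages: first pass to polar coordinates on each Euclidean factor, then perform a logarithmic substitution on the resulting radial coordinates to obtain $(u,s)$.

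In the first stage, I would decompose $\bR_*^{\bd} = \prod_{j=1}^k (\bR^{d_j}\setminus\{0\})$ and use polar coordinates $z_j = r_j \xi_j$ with $r_j = \|z_j\|$ and $\xi_j = z_j/\|z_j\| \in S^{d_j-1}$. This gives $dz_j = r_j^{d_j-1}\,dr_j\,d\kappa_j(\xi_j)$, so that
\[
\int_{\bR_*^{\bd}} f(\pid(z))\,dz = \int_{\bS_{\bd}} \int_{(0,\infty)^k} f\big(\tilde u(r),\tilde s(r),\xi\big)\,\prod_{j=1}^k r_j^{d_j-1}\,dr_1\cdots dr_k\,d\kd(\xi),
\]
where $\tilde u(r) = (\log r_1,\ldots,\log r_{k-1})$ and $\tilde s(r) = \sum_{j=1}^k d_j\log r_j$.

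In the second stage, I would compute the Jacobian of the substitution $(r_1,\ldots,r_k)\mapsto (u_1,\ldots,u_{k-1},s)$ given by $u_j = \log r_j$ for $j<k$ and $s = \sum_{j=1}^k d_j \log r_j$. The Jacobian matrix is lower triangular apart from the last row, with diagonal entries $1/r_1,\ldots,1/r_{k-1},d_k/r_k$, so its determinant equals $d_k/\prod_{j=1}^k r_j$. Hence $dr_1\cdots dr_k = d_k^{-1}\prod_j r_j \,du_1\cdots du_{k-1}\,ds$, and combining with the spherical weight,
\[
\prod_{j=1}^k r_j^{d_j-1}\,dr_1\cdots dr_k = \frac{1}{d_k}\prod_{j=1}^k r_j^{d_j}\,du\,ds = \frac{e^s}{d_k}\,du\,ds,
\]
since by the defining relation $s = \sum_j d_j \log r_j$ one has $\prod_j r_j^{d_j} = e^s$. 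Alternatively, the same Jacobian can be read off directly from the explicit inverse $\pid^{-1}$ written in \eqref{defpsi}. Substituting this back and applying Fubini (justified by the boundedness and bounded support of $f$) yields the claimed formula.

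The argument is essentially a routine Jacobian computation, so no real obstacle arises; the only point requiring some care is the correct orientation of the determinant and the identification $\prod_j r_j^{d_j} = e^s$, which ensures that the weight $e^s$ in the target measure has the right exponent regardless of the individual $d_j$.
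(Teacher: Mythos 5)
Your proof is correct and follows the same route as the paper, which simply notes that the lemma emerges from computing the Jacobian of the inverse map $\pid^{-1}$ in \eqref{defpsi}; your two-stage computation (polar coordinates on each factor, then the logarithmic substitution with Jacobian $d_k/\prod_j r_j$ and the identity $\prod_j r_j^{d_j}=e^s$) is exactly that calculation carried out explicitly.
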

\vspace{0.2cm}

Let us now write out the set $\Omega_T$ in $(u,s,\xi)$-coordinates. We define
\[
\Delta_T := \pid(\Omega_T) \subset \bR^{k-1} \times \bR \times \bS_{\bd},
\]
and given a point $z$ in $\bR_*^{\bd}$, we set
\[
u=\ud(z) = (u_1,\ldots,u_{k-1}),\quad s = \sd(z), \quad \xi = \td(z).
\]
Then $z \in \Omega_T$ if and only if 
\[
s \in \log I,\quad \xi \in B,\quad u_1  < \log T,\ldots, u_{k-1}< \log T,\quad s - \sum_{j=1}^{k-1} d_j u_j < d_k \log T.
\]
We now set $v_j = u_j - \log T$ for $j = 1,\ldots,k-1$. Then, 
the above conditions on $u$ are equivalent to
\begin{equation}
\label{def_v}
v_1,\ldots,v_{k-1} < 0\quad\quad\hbox{and}\quad\quad
\sum_{j=1}^{k-1} d_jv_j > -(d\log T - s).
\end{equation}
For $s < d \log T$, we let $\delta_T(s)$ denote the diagonal $(k-1) \times (k-1)$-matrix whose diagonal elements 
$\delta_{T,j}(s)$ are given by
\[
\delta_{T,j}(s): = \frac{d\log T-s}{d_j}, \quad \textrm{for $j=1,\ldots,k-1$}.
\]
We note that since the interval $I$ is bounded, the inequality $s < d \log T$ is satisfied for all $x \in \Omega_T(I,B)$ when $T> e^{\sup(I)/d}$.
Then \eqref{def_v} can be re-written as
\[
\min_j \, \delta_{T,j}(s)^{-1} v_j < 0\quad\hbox{and}\quad \sum_{j=1}^{k-1} \delta_{T,j}(s)^{-1}v_j > -1,
\]
Let
\begin{equation}\label{eq:s1}
\cS_1 := \left\{ (w_1,\ldots,w_{k-1}) \in \bR^{k-1} \, : \, w_1,\ldots,w_{k-1} < 0 \qen {\sum}_{j=1}^{k-1} w_j > -1 \right\}
\end{equation}
and
$$
v_T: = (\log T,\ldots,\log T).
$$ 
We conclude that 
\begin{equation}\label{eq:delta}
\Delta_T=\pi(\Omega_T)=\big\{(u,s,\xi)\, :\,\, s \in \log I, \quad \xi \in B, \quad u \in \delta_T(s)\cS_1 + v_T\big\}
\end{equation}
when $T> e^{\sup(I)/d}$.

\subsection{Volume and variance computations}\label{subsec:calcsigma}

The above parametrization of $\Omega_T$ leads, in particular, to an an easy computation of its volume, and the mean and  the variance of the Siegel transforms 
$\widehat\chi_{\Omega_T}$.

\vspace{0.2cm}

\begin{lemma}
	\label{lemma_volume}
There exists a polynomial 
	$P_{I,B}$ such that
	\[
	P_{I,B}(t) = c_{k-1}(I,B) t^{k-1} + O(t^{k-2}),
	\]
	where 
	\[
	c_{k-1}(I,B) = \frac{d^{k-1}}{d_1\cdots d_{k}} \Leb(I) \, \Vol_{k-1}(S_1) \, \kd(B),
	\]
	such that 
	$$
	\Vol\big(\Omega_T(I,B)\big) = P_{I,B}(\log T),
	$$
	for all $T> e^{\sup(I)/d}$. 
\end{lemma}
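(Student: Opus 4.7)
The plan is to compute $\Vol(\Omega_T)$ by changing variables via the map $\pi$ from \eqref{defphi} and exploit the explicit description of $\Delta_T = \pi(\Omega_T)$ given in \eqref{eq:delta}. Applying Lemma \ref{lemma_volumeint} with $f = \chi_{\Delta_T}$, and using that $\Delta_T$ factors (for $T > e^{\sup(I)/d}$) as $\{s \in \log I,\ \xi \in B,\ u \in \delta_T(s)\cS_1 + v_T\}$, we obtain
\[
\Vol(\Omega_T) = \frac{\kd(B)}{d_k} \int_{\log I} e^{s}\,\Vol_{k-1}\big(\delta_T(s)\cS_1 + v_T\big)\,ds = \frac{\kd(B)}{d_k} \int_{\log I} e^{s}\,\Vol_{k-1}\big(\delta_T(s)\cS_1\big)\,ds,
\]
since the translation by $v_T$ does not alter Lebesgue measure.

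Next I would evaluate $\Vol_{k-1}(\delta_T(s)\cS_1)$. Since $\delta_T(s)$ is diagonal with entries $\delta_{T,j}(s) = (d\log T - s)/d_j$, this volume equals $\frac{(d\log T - s)^{k-1}}{d_1\cdots d_{k-1}}\,\Vol_{k-1}(\cS_1)$. Substituting $t = e^s$ on $\log I$, we get
\[
\Vol(\Omega_T) = \frac{\Vol_{k-1}(\cS_1)\,\kd(B)}{d_1\cdots d_k} \int_{I} \big(d\log T - \log t\big)^{k-1}\,dt.
\]
Expanding the integrand via the binomial theorem in the variable $\log T$ gives a polynomial of degree $k-1$ in $\log T$ whose coefficients are finite integrals of powers of $\log t$ over the bounded interval $I$. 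Denote the resulting polynomial by $P_{I,B}$.

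The leading coefficient comes from the $m = k-1$ term of the expansion, which is $(d\log T)^{k-1}$ with coefficient $\binom{k-1}{k-1} = 1$; the integral over $I$ of this constant-in-$t$ term contributes $\Leb(I)$, so the coefficient of $(\log T)^{k-1}$ is exactly
\[
c_{k-1}(I,B) = \frac{d^{k-1}}{d_1\cdots d_k}\,\Leb(I)\,\Vol_{k-1}(\cS_1)\,\kd(B),
\]
as claimed, and all remaining terms are $O((\log T)^{k-2})$ with implicit constants depending only on $I, B$ and $\bd$. There is no substantive obstacle in this argument; the only non-trivial observations are finiteness of $\Vol_{k-1}(\cS_1)$ (immediate from the description \eqref{eq:s1} of $\cS_1$ as a bounded simplex-like region) and the $A$-equivariance of $\pi$ used implicitly in writing $\Delta_T$ as a translate of $\delta_T(s)\cS_1$.
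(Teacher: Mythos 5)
Your proposal is correct and follows essentially the same route as the paper: apply Lemma \ref{lemma_volumeint} to the description \eqref{eq:delta} of $\Delta_T$, use translation invariance and the diagonal scaling to get $\Vol_{k-1}(\delta_T(s)\cS_1)=\frac{(d\log T-s)^{k-1}}{d_1\cdots d_{k-1}}\Vol_{k-1}(\cS_1)$, and expand binomially to identify the polynomial and its leading coefficient. The only cosmetic difference is your substitution $t=e^s$ before expanding, which changes nothing of substance.
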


\vspace{0.2cm}

\begin{proof}
It follows from  \eqref{eq:delta} and Lemma \ref{lemma_volumeint} that 
\begin{eqnarray*}
\Vol(\Omega_T) 
&=&
\frac{\kd(B)}{d_k} \int_{\log I} \Vol_{k-1}\big(\delta_T(s)S_1 + v_T\big) \, e^s \, ds \\
&=&
\frac{\kd(B)}{d_k} \, \Vol_{k-1}(S_1) \int_{\log I} \frac{(d\log T-s)^{k-1}}{d_1\cdots d_{k-1}} \, e^s \, ds .
\end{eqnarray*} If we expand the inner parenthesis and integrating term-wise, we
deduce that $\Vol(\Omega_T) = P_{I,B}(\log T)$ for the polynomial
\[
P_{I,B}(t) = \frac{\kd(B)}{d_k}\, \Vol_{k-1}(S_1) \int_{\log I} \frac{(d t-s)^{k-1}}{d_1\cdots d_{k-1}} \, e^s \, ds .
\]
The leading term of this polynomial is $c_{k-1}(I,B)t^{k-1}$ with
\[
c_{k-1}(I,B) =  \frac{d^{k-1} }{d_1 \cdots d_k} \kd(B)\Vol_{k-1}(\cS_1) \int_{\log I} e^s \, ds = \frac{d^{k-1}}{d_1\cdots d_{k}} 
\kd(B)\, \Vol_{k-1}(\cS_1) \, \Leb(I),
\]
which finishes the proof of the lemma.
\end{proof}

For \eqref{eq:siegel2}, we also obtain that 
$$
\int_X \widehat \chi_{\Omega_T}\, d\mu= P_{I,B}(\log T)
= c_{k-1}(I,B) (\log T)^{k-1} + O\big((\log T)^{k-2}\big).
$$

\medskip

To compute the variance of the Siegel transform, we need the following

\vspace{0.2cm}

\begin{theorem}[Rogers' mean-square value theorem, \cite{Rog}]
	\label{thm_rogers}
	Let $d \geq 3$ and let $f : \bR^d \ra \bR$ be a bounded and non-negative Borel measurable function with bounded support.
	Then $\widehat{f} \in L^{2}(\xd)$ and 
	\[
	\int_{\xd} \Big(\widehat{f} - \int_{\xd} \widehat{f} \, d\md \Big)^2 \, d\md = 
	\frac{1}{\zeta(d)} \sum_{p,q \geq 1} \Big( \int_{\bR^d} f(pz) f(qz) \, dz + \int_{\bR^d} f(pz) f(-qz) \, dz \Big),
	\]
	where $\zeta$ denotes the Riemann zeta-function.
\end{theorem}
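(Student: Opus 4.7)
The approach I would take is the classical decomposition of pairs of nonzero lattice vectors by rational dependence, combined with Siegel-type mean value theorems. I would expand
\[
\widehat f(\Lambda)^2 = \sum_{v_1, v_2 \in \Lambda \setminus \{0\}} f(v_1)\, f(v_2),
\]
and split $\int_X \widehat f^{\,2}\, d\mu = I_{\mathrm{ind}} + I_{\mathrm{dep}}$ according to whether the pair $(v_1, v_2)$ is $\bQ$-linearly independent or dependent. The $L^2$ membership of $\widehat f$ will follow a posteriori from finiteness of the resulting expressions.

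For $I_{\mathrm{ind}}$, I would apply a bilinear analogue of Siegel's mean value theorem, viewing $f \otimes f$ as a function on $\bR^d \oplus \bR^d$, to obtain
\[
I_{\mathrm{ind}} = \Big(\int_{\bR^d} f(z)\, dz\Big)^2 = \Big(\int_X \widehat f\, d\mu\Big)^2,
\]
which exactly cancels the mean-squared term when we pass to the variance.

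For $I_{\mathrm{dep}}$, I would observe that every linearly dependent pair lies in a rational 1-dimensional subspace whose intersection with $\Lambda$ is a rank-1 sublattice generated by a primitive vector $w \in \Lambda$, where \emph{primitive} means not a proper integer multiple of another lattice vector (so both $w$ and $-w$ are primitive). Then every $v \in \Lambda \setminus \{0\}$ has a unique expression $v = nw$ with $n \geq 1$ and $w$ primitive, and a dependent pair is uniquely parametrized by $(w, n_1, n_2, \epsilon)$ with $w$ primitive, $n_1, n_2 \geq 1$, $\epsilon \in \{\pm 1\}$, giving $(v_1, v_2) = (n_1 w,\, \epsilon n_2 w)$. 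I would then apply Siegel's mean value theorem for primitives,
\[
\int_X \sum_{w \in \Lambda_{\mathrm{prim}}} g(w)\, d\mu = \frac{1}{\zeta(d)} \int_{\bR^d} g(z)\, dz,
\]
to $g(w) = \sum_{n_1, n_2 \geq 1} [f(n_1 w) f(n_2 w) + f(n_1 w) f(-n_2 w)]$, producing
\[
I_{\mathrm{dep}} = \frac{1}{\zeta(d)} \sum_{p, q \geq 1} \int_{\bR^d} \big[f(pz) f(qz) + f(pz) f(-qz)\big]\, dz,
\]
which is the claimed expression once combined with the cancellation of $I_{\mathrm{ind}}$.

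The main obstacle will be justifying absolute convergence of the double series above (which simultaneously yields $\widehat f \in L^2(X)$ and legitimizes all Fubini-type interchanges). Since $f$ is bounded with $\supp f \subset B(0, R)$, the integrand $f(pz) f(qz)$ vanishes unless $|z| \leq R/\max(p,q)$, so each $(p,q)$-summand is $O(\max(p,q)^{-d})$; summing yields $\sum_{m \geq 1} m \cdot m^{-d} = \sum_{m \geq 1} m^{1-d}$, which converges precisely when $d \geq 3$, matching the hypothesis. The more delicate point is the bilinear Siegel identity used for $I_{\mathrm{ind}}$: one must verify that integrating sums over $\bQ$-independent pairs against $\mu$ yields no correction beyond the factorized integral, reflecting the fact that a random unimodular lattice has no bilinear correlations outside the rational ones. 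This step is the heart of Rogers's original computation in \cite{Rog} and is where the bulk of the technical effort resides.
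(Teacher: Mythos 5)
The paper does not prove this statement at all: Theorem \ref{thm_rogers} is quoted verbatim from Rogers \cite{Rog} and used as a black box, so there is no in-paper argument to compare against. Your sketch is the standard derivation of the second-moment formula and is essentially sound: the unique factorization $v=nw$ with $w$ primitive and $n\ge 1$ does parametrize the $\bQ$-dependent pairs exactly as you claim, the primitive-vector Siegel mean value theorem with the $1/\zeta(d)$ factor then yields $I_{\mathrm{dep}}$ in the stated form, and since $f\ge 0$ all interchanges are licensed by Tonelli, with finiteness (hence $\widehat f\in L^2$) following from your $O(\max(p,q)^{-d})$ bound, whose summability indeed requires $d\ge 3$. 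The one ingredient you correctly flag but do not prove — that the $\mu$-average of the sum over linearly \emph{independent} pairs equals $\big(\int_{\bR^d} f\,dz\big)^2$ with no correction terms — is precisely Rogers' mean value theorem for $k$-tuples with $k=2\le d-1$ (this is also where the hypothesis $d\ge 3$ enters, since for $d=2$ independent pairs are essentially bases and the identity fails); as long as that identity is invoked from \cite{Rog} rather than claimed as obvious, your argument is a complete and faithful reduction of the stated theorem to it.
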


\vspace{0.2cm}

For a future reference, we also note that a straightforward application of  the Cauchy-Schwarz inequality to the expression in 
Theorem \ref{thm_rogers} yields the following corollary:

\vspace{0.2cm}

\begin{corollary}
	\label{cor_RogersL2}
	If $d \geq 3$ and $f : \bR^d \ra \bR$ is a bounded and non-negative Borel measurable function with bounded support, then
	\[
	\big\|\widehat{f}\big\|^2_{L^2(\xd)} \leq \|f\|_1^2 + 2 \frac{\zeta({d}/{2})^2}{\zeta(d)} \|f\|_2^2.
	\]
\end{corollary}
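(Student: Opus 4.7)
The plan is to combine Rogers' mean-square value theorem (Theorem \ref{thm_rogers}) with Siegel's Mean Value Theorem \eqref{eq:siegel}, and then apply Cauchy--Schwarz termwise to the resulting double sum. First, since $f \geq 0$ is bounded with bounded support, $\widehat{f} \in L^2(X)$ by Theorem \ref{thm_rogers}. I would write
\[
\big\|\widehat{f}\big\|_{L^2(X)}^2 = \Big(\int_X \widehat{f}\, d\mu\Big)^2 + \int_X \Big(\widehat{f} - \int_X \widehat{f}\, d\mu\Big)^2 d\mu,
\]
and identify the first term as $\|f\|_1^2$ via \eqref{eq:siegel}, and the second term as the Rogers double sum.

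Next, for each pair $(p,q)$, I would apply Cauchy--Schwarz to each of the two integrals:
\[
\int_{\bR^d} f(pz) f(\pm qz)\, dz \leq \Big(\int_{\bR^d} f(pz)^2\, dz\Big)^{1/2} \Big(\int_{\bR^d} f(qz)^2\, dz\Big)^{1/2}.
\]
A change of variable gives $\int_{\bR^d} f(pz)^2\, dz = p^{-d}\|f\|_2^2$ and similarly $\int_{\bR^d} f(\pm qz)^2\, dz = q^{-d}\|f\|_2^2$, so each integral is bounded by $(pq)^{-d/2}\|f\|_2^2$.

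Summing over $p,q \geq 1$ and noting that the assumption $d \geq 3$ ensures $d/2 > 1$ (so the Dirichlet series converges), I would obtain
\[
\sum_{p,q \geq 1}\int_{\bR^d} f(pz)f(\pm qz)\, dz \;\leq\; \|f\|_2^2 \Big(\sum_{p\geq 1} p^{-d/2}\Big)^2 = \zeta(d/2)^2\, \|f\|_2^2.
\]
Adding the two analogous contributions (one for $+qz$, one for $-qz$) and dividing by $\zeta(d)$ yields the $2\zeta(d/2)^2/\zeta(d)\, \|f\|_2^2$ term, which completes the claimed bound. There is no real obstacle here, as everything is a direct consequence of the stated Rogers formula; the only subtlety is simply that non-negativity of $f$ is used to handle the $f(-qz)$ summand by the same Cauchy--Schwarz argument without sign issues.
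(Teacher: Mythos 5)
Your proof is correct and is essentially the paper's own argument: the paper derives this corollary precisely by writing $\|\widehat{f}\|_{L^2}^2$ as the square of the mean (which is $\|f\|_1^2$ by Siegel's theorem) plus the variance, and bounding each term $\int f(pz)f(\pm qz)\,dz$ in Rogers' formula by $(pq)^{-d/2}\|f\|_2^2$ via Cauchy--Schwarz, summing to $\zeta(d/2)^2$ since $d/2>1$. The only cosmetic remark is that non-negativity is not needed for the Cauchy--Schwarz step itself; it is a hypothesis of Rogers' theorem as stated and ensures $\int f = \|f\|_1$.
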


\vspace{0.2cm}

Now using Theorem \ref{thm_rogers}, we compute the variance:

\vspace{0.2cm}

\begin{corollary}\label{cor:l2}
\[
\sigma^2 := \lim_{T\to\infty} \frac{\int_{\xd} \left(\widehat{\chi}_{\Omega_T} - \int_{\xd} \widehat{\chi}_{\Omega_T}\right)^2 \, d\md}{\Vol(\Omega_T)}=
\frac{1}{\zeta(d)} \left(\sum_{p,q=1}^\infty  \frac{\Leb\left(p^d I \cap q^d I\right)}{p^d q^d \Leb(I)}\right) \Big( 1 +   \frac{\kd(B \cap - B)}{\kd(B)} \Big).
\]
\end{corollary}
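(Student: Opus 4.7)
The plan is to apply Rogers' Mean-Square Value Theorem (Theorem \ref{thm_rogers}) to $f = \chi_{\Omega_T}$, reducing the numerator of $\sigma^2$ to the double sum
\[
\frac{1}{\zeta(d)} \sum_{p,q \geq 1} \Big( \int_{\bR^d} \chi_{\Omega_T}(pz)\chi_{\Omega_T}(qz) \, dz + \int_{\bR^d} \chi_{\Omega_T}(pz)\chi_{\Omega_T}(-qz) \, dz \Big).
\]
Using the scaling identities $N(\pm pz) = p^d N(z)$, $\td(pz) = \td(z)$ and $\td(-qz) = -\td(z)$ for $p,q > 0$ together with the explicit form \eqref{defOT} of $\Omega_T$, I would recognize the set $\{z : pz \in \Omega_T\} \cap \{z : qz \in \Omega_T\}$ as $\Omega_{T/\max(p,q)}(p^{-d} I \cap q^{-d} I, B)$, and the corresponding set with $-q$ in place of $q$ as $\Omega_{T/\max(p,q)}(p^{-d} I \cap q^{-d} I, B \cap -B)$. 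Thus each inner integral is the volume of one of these basic sets.

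The second step is to extract leading asymptotics via Lemma \ref{lemma_volume}. The change of variables $u = p^d q^d t$ gives $\Leb(p^{-d}I \cap q^{-d}I) = (p^d q^d)^{-1}\Leb(p^d I \cap q^d I)$, so Lemma \ref{lemma_volume} yields that the leading coefficient of the relevant volume polynomial equals
\[
\frac{d^{k-1}}{d_1\cdots d_k}\frac{\Leb(p^d I \cap q^d I)}{p^d q^d}\,\Vol_{k-1}(\cS_1)\,\kd(B),
\]
and likewise with $\kd(B \cap -B)$ in place of $\kd(B)$ in the $-q$ case. Dividing by $\Vol(\Omega_T) = c_{k-1}(I,B)(\log T)^{k-1} + O((\log T)^{k-2})$ and using $(\log(T/\max(p,q))/\log T)^{k-1} \to 1$, the ratio of each $(p,q)$-summand to $\Vol(\Omega_T)$ converges pointwise as $T \to \infty$ to $\frac{\Leb(p^d I \cap q^d I)}{p^d q^d \Leb(I)}$ or $\frac{\Leb(p^d I \cap q^d I)}{p^d q^d \Leb(I)} \cdot \frac{\kd(B \cap -B)}{\kd(B)}$, respectively. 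Combining these two contributions yields exactly the expression for $\sigma^2$ stated in the corollary.

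The only obstacle is justifying the interchange of $\lim_{T \to \infty}$ with the double sum, which I would handle by dominated convergence. The substitution $w = pz$ gives the identity $\int_{\bR^d}\chi_{\Omega_T}(pz) \, dz = p^{-d}\Vol(\Omega_T)$, and together with the trivial pointwise estimate $\chi_{\Omega_T}(pz)\chi_{\Omega_T}(\pm qz) \le \min\{\chi_{\Omega_T}(pz), \chi_{\Omega_T}(\pm qz)\}$ this yields the uniform (in $T$) bound
\[
\int_{\bR^d} \chi_{\Omega_T}(pz)\chi_{\Omega_T}(\pm qz) \, dz \le \max(p,q)^{-d}\Vol(\Omega_T).
\]
Since $\sum_{p,q \geq 1} \max(p,q)^{-d} = 2\zeta(d-1) - \zeta(d)$ is finite for $d \geq 3$, dominated convergence applies termwise in the Rogers identity and the desired formula follows.
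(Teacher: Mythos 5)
Your proposal is correct and follows essentially the same route as the paper: apply Rogers' mean-square theorem to $\chi_{\Omega_T}$, recognize each intersection $p^{-1}\Omega_T\cap\pm q^{-1}\Omega_T$ as a basic domain of the form $\Omega_{T'}(I',B')$, extract the leading asymptotics from Lemma \ref{lemma_volume}, and justify the termwise limit by dominated convergence using $d\ge 3$. The only cosmetic difference is that the paper rescales by $q$ and splits off the diagonal $p=q$ to introduce the limits $\kappa_\pm(q/p)$, whereas you keep the full double sum and give the explicit dominating bound $\max(p,q)^{-d}$; both are equivalent computations.
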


\vspace{0.2cm}

\begin{proof}
By Theorem \ref{thm_rogers}, 
$$
\int_{\xd} \Big(\widehat{\chi}_{\Omega_T} - \int_{\xd} \widehat{\chi}_{\Omega_T}\Big)^2 \, d\md=
\frac{1}{\zeta(d)} \sum_{p,q \geq 1} \Big( \Vol(p^{-1}\Omega_T \cap q^{-1}\Omega_T)  + 
\Vol(p^{-1}\Omega_T \cap -q^{-1}\Omega_T)\Big).
$$
If we split this sum into sums over $\{p = q\}$ and $\{p \neq q\}$ and use the symmetry of $p$ and $q$ and the formula $\Vol(q^{-1}\Omega_T) = q^{-d}\Vol(\Omega_T)$ for every $q \geq 1$, we see that 
this sum can be written as
\[
\Vol(\Omega_T)+\Vol(\Omega_T \cap -\Omega_T)+
\frac{2}{\zeta(d)} \sum_{q=1}^\infty \frac{1}{q^d} \sum_{p=1}^{q-1}  \Big( \Vol\big(\Omega_T \cap (q/p)\Omega_T\big)  + 
	\Vol\big(\Omega_T \cap - (q/p) \Omega_T\big)\Big).
\]
We observe that for $c,T>0$, $I\subset (0,\infty)$, and $B \subset \bS_{\bd}$,
\[
\pm c \Omega_T(I,B) = \Omega_{cT}(c^d I, \pm B),
\]
and 
for $T_1,T_2>0$, $I_1, I_2 \subset (0,\infty)$, and $B_1, B_2\subset \bS_{\bd}$,
\[
\Omega_{T_1}(I_1,B_1) \cap \Omega_{T_2}(I_2,B_2) = \Omega_{\min(T_1,T_2)}(I_1 \cap I_2,B_1 \cap B_2).
\]
Hence, we deduce from Lemma \ref{lemma_volume} that for every $c\ge 1$,
\[
\kappa_{\pm}(c) := \lim_{T\to\infty} \frac{\Vol(\Omega_T \cap \pm c \Omega_T)}{\Vol(\Omega_T)}=
\lim_{T\to\infty} \frac{\Vol\big(\Omega_{T}(I \cap c^d I,B \cap \pm B)\big)}{\Vol(\Omega_T)}=
\frac{\Leb(I \cap c^d I)}{\Leb(I)} \, \frac{\kd(B \cap \pm B)}{\kd(B)}.
\]
Then since we are assuming that $d\ge 3$, we can apply the Dominated Convergence Theorem to conclude that the limit $\sigma^2$ exists and 
\begin{align*}
\sigma^2 &= 1 +\frac{\kd(B\cap -B)}{\kd(B)}+  \frac{2}{\zeta(d)} \sum_{q=1}^\infty \frac{1}{q^{d}} \sum_{p=1}^{q-1}  \big(\kappa_{+}(q/p) + \kappa_{-}(q/p)\big)\\
&= \left(1 +  \frac{2}{\zeta(d)} \sum_{q=1}^\infty \frac{1}{q^{d}} \sum_{p=1}^{q-1} \frac{\Leb\left(I \cap (q/p)^d I\right)}{\Leb(I)} \right) \Big( 1 +   \frac{\kd(B \cap - B)}{\kd(B)} \Big).
\end{align*}
This implies the stated formula.
\end{proof}

\subsection{Tessellations of the sets $\Omega_T(I,B)$}

In this subsection, we construct, for all large enough $T$, a {functional} tiling of the indicator function $\chi_{\Omega_T}$
using the coordinate system introduced in the previous section.
This tiling will be the basis for our smooth approximation scheme later.  Before we can state our main observation (Corollary \ref{cor_exacttiling}) of this subsection, we need some preliminaries. For a positive integer $N$, we define
\[
\cS(N) := \left\{ (u_1,\ldots,u_{k-1}) \in \bR^{k-1} \, : \,\, u_1,\ldots,u_{k-1} < 0 \qen {\sum}_{j=1}^{k-1} u_j > -N \right\},
\]
and set
\begin{equation}
\label{def_S1S2}
\cS_1 := \cS(1) \qand \cS_2 := [-1,0)^{k-1} \setminus \cS(1).
\end{equation}
We note that this definition of $\cS_1$ coincides with the one given in \eqref{eq:s1} above. Furthermore, we define
\[
P_{N,i} := \big\{ n \in [0,N]^{k-1} \cap \bZ^{k-1} \, :\,\, \cS_i - n \subset S(N) \big\}, \quad \textrm{for $i = 1,2$}.
\]
\vspace{0.2cm}

\begin{lemma}\label{l:p}
For every positive integer $N$, 
\[
\cS(N) = \Big( \bigsqcup_{n \in P_{N,1}} (\cS_1-n) \Big) \bigsqcup  \Big( \bigsqcup_{n \in P_{N,2}} (\cS_2-n) \Big).
\]
In particular, 
\[
\max_{n \in P_{N,i}} |n| \ll N \qand |P_{N,i}| \ll \Vol_{k-1}(S(N)) \ll N^{k-1}.
\] 
\end{lemma}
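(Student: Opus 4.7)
The plan is to tessellate the negative orthant $(-\infty,0)^{k-1}$ using integer translates of the half-open unit cube $[-1,0)^{k-1}$, and then to bisect each cube along the diagonal hyperplane $\sum_j u_j=-1$ into its $\cS_1$-part and $\cS_2$-part. Since the half-open intervals $\{[-1,0)-m\}_{m\in\bZ_{\ge 0}}$ disjointly cover $(-\infty,0)$, their Cartesian products disjointly cover $(-\infty,0)^{k-1}$, and by the very definition of $\cS_2$ each cube $[-1,0)^{k-1}-n$ splits as $(\cS_1-n)\sqcup(\cS_2-n)$. Since $\cS(N)\subset(-\infty,0)^{k-1}$, the problem then reduces to identifying precisely which pieces $\cS_i-n$ lie inside $\cS(N)$ --- and this identification is exactly what the index sets $P_{N,i}$ are designed to encode.

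The key calculation is to evaluate the linear functional $u\mapsto\sum_j u_j$ on each piece. Writing $|n|:=\sum_j n_j$, this functional takes values in the open interval $(-1-|n|,-|n|)$ on $\cS_1-n$, and in the closed interval $[-(k-1)-|n|,-1-|n|]$ on $\cS_2-n$, the lower extremum being attained at $(-1,\ldots,-1)-n$. Since every $\cS_i-n$ with $n\in\bZ_{\ge 0}^{k-1}$ automatically lies in $(-\infty,0)^{k-1}$, inclusion in $\cS(N)$ reduces to the single requirement that $\sum_j u_j>-N$ holds throughout the piece. This yields $\cS_1-n\subset\cS(N)$ iff $|n|\le N-1$, and $\cS_2-n\subset\cS(N)$ iff $|n|\le N-k$.

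What remains --- and what I expect to be the main obstacle --- is showing that no ``straddling'' piece (one meeting but not contained in $\cS(N)$) is missed by this selection. For $\cS_1-n$ this is automatic: straddling the hyperplane $\sum_j u_j=-N$ would force $N-1<|n|<N$, which is impossible for integer $|n|$, so each $\cS_1-n$ is either fully inside $\cS(N)$ or disjoint from it. For $\cS_2-n$ a straddling piece can occur precisely when $|n|\in\{N-k+1,\ldots,N-2\}$, and these boundary pieces meet $\cS(N)$ in a set lying along the affine hyperplane $\sum_j u_j=-N$; disposing of this finite collection of lower-dimensional strata (e.g.\ by reading the displayed identity modulo a Lebesgue-null boundary set, as is sufficient for all subsequent uses) is the technically delicate step. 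Finally, the quantitative ``in particular'' bounds are immediate from the inequalities above: $|n|\le N$ forces $\max_{n\in P_{N,i}}|n|\ll N$, and a standard lattice-point count in the discrete simplex $\{n\in\bZ_{\ge 0}^{k-1}:|n|\le N\}$ yields $|P_{N,i}|\le\binom{N+k-1}{k-1}\ll N^{k-1}$, which is comparable to $\Vol_{k-1}(\cS(N))=N^{k-1}/(k-1)!$.
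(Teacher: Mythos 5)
Your set-up and threshold computations are correct and follow essentially the same route as the paper: the half-open cubes $[-1,0)^{k-1}-n$ tile the negative orthant, each splits as $(\cS_1-n)\sqcup(\cS_2-n)$, one has $\cS_1-n\subset\cS(N)$ iff $|n|\le N-1$ (and these pieces never straddle), and $\cS_2-n\subset\cS(N)$ iff $|n|\le N-k$, with straddling possible exactly for $N-k+1\le|n|\le N-2$. The genuine gap is in your last step: the claim that a straddling $\cS_2$-piece meets $\cS(N)$ ``in a set lying along the hyperplane $\sum_j u_j=-N$'' is false. The intersection $(\cS_2-n)\cap\cS(N)$ is the part of the piece strictly above that hyperplane, a set of positive $(k-1)$-dimensional measure; for $|n|=N-k+1$ it is all of $\cS_2-n$ except the single corner $(-1,\ldots,-1)-n$. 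Since such $n$ are excluded from $P_{N,2}$ (the definition demands containment of the whole piece), the right-hand side of the asserted identity omits all of $(\cS_2-n)\cap\cS(N)$; summing over the $\asymp_k N^{k-2}$ straddling $n$, the two sides differ by a set of measure of order $N^{k-2}$, which is nowhere near Lebesgue-null (and for $k\ge 3$ is not even $o\big(\Vol_{k-1}(\cS(N))^{1/2}\big)$), so the ``modulo a null set'' reading cannot close the argument.

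In fact, with the definitions as given the equality is simply false whenever $k-1\ge 2$. Take $k-1=2$, $N=2$: then $P_{2,2}=\emptyset$, because $(-1,-1)\in\cS_2$ has coordinate sum $-2$, while $P_{2,1}=\{(0,0),(1,0),(0,1)\}$; the point $(-1/2,-3/5)\in\cS(2)$ lies in none of the sets $\cS_1-n$ with $n\in P_{2,1}$, so the union misses the positive-measure region $\{u\in[-1,0)^2:\,-2<u_1+u_2\le-1\}$. Only for $k=2$, where $\cS_2=\{-1\}$ is a point, does the displayed identity hold as written. Consequently no proof along your lines (or any other) can be completed without modifying the statement, e.g.\ replacing the equality by the two-sided inclusion between the unions over the inner index sets $\{n:\cS_i-n\subset\cS(N)\}$ and the outer ones $\{n:(\cS_i-n)\cap\cS(N)\ne\emptyset\}$, and then carrying the $O(N^{k-2})$ straddling pieces through the subsequent estimates. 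For comparison, the paper's own short proof takes the same route as yours but stops even earlier: it checks that every $u\in\cS(N)$ lies in a unique translate $\cS_i-n$ with $n\in[0,N]^{k-1}\cap\bZ^{k-1}$, but never verifies that this $n$ belongs to $P_{N,i}$ — precisely what fails for the straddling $\cS_2$-pieces. So you have correctly located the crux that the paper glosses over, but the null-set disposal you propose does not resolve it.
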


\vspace{0.2cm}

\begin{proof}
Fix $u \in \cS(N)$, and note that since $-N \leq u_j \leq 0$ for all $j$, there are \emph{unique} integers $0 \leq n_j \leq N$ such that 
\[
w := u + n \in [-1,0)^{k-1}, \quad \textrm{where $n = (n_1,\ldots,n_{k-1})$}, 
\]
and thus either $w \in \cS_1$ or $w \in \cS_2$, whence $u \in \cS_{i} - n$ for either $i = 1,2$. Clearly these are disjoint events, so 
in particular,
\[
S(N) = \Big( \bigsqcup_{n \in P_{N,1}} (S_1-n) \Big) \bigsqcup  \Big( \bigsqcup_{n \in P_{N,2}} (S_2-n) \Big),
\]
which finishes the proof.
\end{proof}

We observe that 
in view of  \eqref{eq:delta}
the sets $\Delta_T$ are related to suitable dilations of the sets $\cS(N)$. 
Indeed, for $T$ and $s$ with $s < d \log T$, we let 
$$
\tau_T(s): = \Diag\big(\tau_{T,1}(s),\ldots,\tau_{T,k-1}(s)\big)
$$
denote the 
diagonal $(k-1) \times (k-1)$-matrix with the {positive} diagonal entries
\begin{equation}
\label{deftau}
\tau_{T,j}(s): = \frac{d\log T-s}{d_j \lfloor \log T \rfloor}, \quad \textrm{for $j=1,\ldots,k-1$},
\end{equation}
then 
\[
\Delta_T=\big\{(u,s,\xi)\, :\,\, s \in I,\quad \xi \in B,\, \quad u \in \tau_T(s)\cS(\lfloor \log T \rfloor) + v_T\big\}.
\]
Therefore, applying the Lemma \ref{l:p} to $\cS(\lfloor \log T \rfloor)$, we get the following ``functional tiling'' for the characteristic function $\chi_{\Delta_T}$.

\vspace{0.2cm}

\begin{lemma}\label{l:tile}
For all $(u,s,\xi) \in \bR^{k-1} \times \bR \times \bS_{\bd}$ with $s < d\log T$, 
\begin{align*}
\chi_{\Delta_T}(u,s,\xi) 
=& 
\sum_{n \in P_{\lfloor \log T \rfloor,1}} \chi_{\cS_1}\left(\tau_T(s)^{-1}(u + \tau_T(s)n - v_T)\right) \, \chi_{\log I}(s) \, \chi_B(\xi)  \\
&\;\;\;\;+ 
\sum_{n \in P_{\lfloor \log T \rfloor,2}} \chi_{\cS_2}\left(\tau_T(s)^{-1}(u + \tau_T(s)n - v_T)\right) \, \chi_{\log I}(s) \, \chi_B(\xi).
\end{align*}
In particular, for all $T> e^{\sup(I)/d}$, this identity holds everywhere. 
\end{lemma}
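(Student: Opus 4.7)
The plan is to derive the identity directly from the description of $\Delta_T$ obtained in \eqref{eq:delta} (rewritten in terms of $\tau_T(s)$ just before the statement) combined with the combinatorial decomposition of Lemma \ref{l:p}. There is no real technical obstacle; the argument is essentially bookkeeping with a linear change of variables on $\bR^{k-1}$ and a check of the boundary case $s \geq d\log T$.

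First, I would rewrite the characteristic function of $\Delta_T$ in product form. Using
\[
\Delta_T=\big\{(u,s,\xi)\,:\, s\in \log I,\ \xi\in B,\ u\in \tau_T(s)\,\cS(\lfloor\log T\rfloor)+v_T\big\},
\]
valid whenever $\tau_T(s)$ is a well-defined positive diagonal matrix (that is, when $s < d\log T$), one factors
\[
\chi_{\Delta_T}(u,s,\xi)=\chi_{\log I}(s)\,\chi_B(\xi)\,\chi_{\cS(\lfloor\log T\rfloor)}\!\big(\tau_T(s)^{-1}(u-v_T)\big).
\]
Next, I would apply Lemma \ref{l:p} with $N=\lfloor\log T\rfloor$, which gives the pointwise identity
\[
\chi_{\cS(\lfloor\log T\rfloor)}(w)=\sum_{n\in P_{\lfloor\log T\rfloor,1}}\chi_{\cS_1}(w+n)+\sum_{n\in P_{\lfloor\log T\rfloor,2}}\chi_{\cS_2}(w+n).
\]
Substituting $w=\tau_T(s)^{-1}(u-v_T)$ and using the identity $\tau_T(s)^{-1}(u-v_T)+n=\tau_T(s)^{-1}(u+\tau_T(s)n-v_T)$ produces exactly the displayed formula on the region $s<d\log T$.

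For the ``in particular'' clause, the key observation is that when $T>e^{\sup(I)/d}$ one has $\sup(\log I)<d\log T$, so the factor $\chi_{\log I}(s)$ on the right-hand side is already zero whenever $s\geq d\log T$. On the left-hand side, the same inequality forces $s\notin \log I$ and hence $(u,s,\xi)\notin \Delta_T$, so both sides vanish outside $s<d\log T$ and the identity extends to all $(u,s,\xi)\in\bR^{k-1}\times\bR\times\bS_{\bd}$. The only point requiring care is keeping the conjugation by $\tau_T(s)$ straight in the translation step; everything else is a direct transcription of Lemma \ref{l:p}.
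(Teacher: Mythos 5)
Your proposal is correct and follows exactly the route the paper intends: the paper presents Lemma \ref{l:tile} as an immediate consequence of the description of $\Delta_T$ in terms of $\tau_T(s)\cS(\lfloor\log T\rfloor)+v_T$ together with the disjoint decomposition of $\cS(N)$ in Lemma \ref{l:p}, and your factorization, substitution $w=\tau_T(s)^{-1}(u-v_T)$, and the observation that for $T>e^{\sup(I)/d}$ both sides vanish when $s\geq d\log T$ (via the factor $\chi_{\log I}(s)$) are precisely that argument spelled out.
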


\subsection{Construction of a functional tiling}

Now we construct our functional tiling, namely, the objects satisfying conditions (I.a)--(I.c) and (II.a)--(II.c) with $V_T:=\Vol(\Omega_T)$.

\subsubsection{Construction of the sets $\widetilde Q_{T,i}$, $Q_{T,i}(y)$ and maps $\beta_{T,i}$, $\widetilde{\beta}_{T,i}$
(assumptions (II.a)--(II.b))}
\label{subsec:constructbeta}
Let us now rewrite the assertion of Lemma \ref{l:tile},
so that it fits the decomposition \eqref{FT0}. 
We note that
\begin{equation}
\label{deftauinfty}
\tau_{T}(s)=\tau_\infty+O\big(1/(\log T)\big)\quad\hbox{as $T\to\infty$}
\end{equation}
uniformly on $s$ in compact sets, where
$$
\tau_\infty := \Diag(d/d_1,\ldots,d/d_{k-1}).
$$
We define
\[
\beta_{T} : \bR^{k-1} \times \bR \ra \bR^{k-1} \qand \widetilde{\beta}_{T} : \bR^{k-1} \ra \bR^{k-1}
\]
by
\begin{equation}
\label{eq:betta}
\beta_{T}(u,s) := \tau_T(s)u - v_T \quad\hbox{and}\quad
\widetilde{\beta}_{T}(u) := \tau_\infty u - v_T
\end{equation}
for $u \in \bR^{k-1}$ and $s \in \bR$.
Let
\begin{equation}\label{defQTSitilde0}
\widetilde{Q}_{T,i} := P_{\lfloor \log T \rfloor,i} \subset \bR^{k-1}, \quad\quad \textrm{for $i = 1,2$}.
\end{equation}
From Lemma \ref{lemma_volume} and Lemma \ref{l:p}, we see that
$|\widetilde{Q}_{T,i}|  \ll \Vol(\Omega_T).$
The condition \eqref{QTgrowth} in (II.a) can be also checked easily.
The following lemma verifies condition (II.b).
We recall that $\|\cdot\|$ denotes the $\ell^\infty$-norm on $\bR^{k-1}$.

\vspace{0.2cm}

\begin{lemma}
\label{lemma_beta}
Let $J \subset \bR$ be a bounded interval.
\begin{enumerate}
\item[(i)] There exist $c_1,c_2>0$ such that for all $T\ge T_0(J)$, $s_1,s_2 \in J$,
and $u,v \in \widetilde{Q}_{T,i}$,
$$
\big\|\beta_{T}(u,s_1) - \beta_{T}(v,s_2)\big\| \geq c_1 \|u-v\|-c_2.
$$
\item[(ii)] There exists $c_3>0$ such that for all $T\ge T_0(J)$, $s \in J$, and $u \in \widetilde{Q}_{T,i}$,
\[
\big\|\beta_T(u,s) - \widetilde{\beta}_T(u)\big\| \leq c_3.
\]
\end{enumerate}
\end{lemma}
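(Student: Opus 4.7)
The plan is to observe that both parts are consequences of a first-order expansion
\[
\tau_T(s) = \tau_\infty + O_J\!\left(\tfrac{1}{\log T}\right) \quad \textrm{as $T \to \infty$, uniformly for $s \in J$,}
\]
combined with the fact that, by construction of $\widetilde{Q}_{T,i} = P_{\lfloor \log T \rfloor, i}$, every $u \in \widetilde{Q}_{T,i}$ lies in $[0,\lfloor \log T \rfloor]^{k-1}$, so that $\|u\| \le \lfloor \log T \rfloor$. Concretely, the $j$-th diagonal entry of $\tau_T(s) - \tau_\infty$ equals
\[
\frac{d\log T - s}{d_j \lfloor \log T \rfloor} - \frac{d}{d_j} = \frac{d(\log T - \lfloor \log T \rfloor) - s}{d_j \lfloor \log T \rfloor} = O_J\!\left(\tfrac{1}{\log T}\right),
\]
and similarly $\tau_{T,j}(s_1) - \tau_{T,j}(s_2) = (s_2 - s_1)/(d_j \lfloor \log T \rfloor) = O_J(1/\log T)$. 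In both parts the translation $v_T$ cancels when one takes differences, so only these two estimates are needed.

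For part (ii), I would simply compute
\[
\beta_T(u,s) - \widetilde{\beta}_T(u) = (\tau_T(s) - \tau_\infty)\, u,
\]
and bound the $\ell^\infty$-norm of the right-hand side by $\|\tau_T(s)-\tau_\infty\|_\infty \cdot \|u\|_\infty = O_J(1/\log T) \cdot \lfloor \log T \rfloor = O_J(1)$, which yields the constant $c_3$.

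For part (i), I would split
\[
\beta_T(u,s_1) - \beta_T(v,s_2) = \tau_T(s_1)(u-v) + (\tau_T(s_1) - \tau_T(s_2))\, v.
\]
The second summand is bounded in $\ell^\infty$-norm by $\|\tau_T(s_1) - \tau_T(s_2)\|_\infty \cdot \|v\|_\infty = O_J(1/\log T) \cdot \lfloor \log T \rfloor = O_J(1)$, so it is absorbed into an additive constant $c_2$. For the first summand, each diagonal entry $\tau_{T,j}(s_1)$ is strictly positive and tends uniformly on $J$ to $d/d_j > 0$; hence for $T \ge T_0(J)$ one has $\min_j \tau_{T,j}(s_1) \ge c_1$ for some $c_1 > 0$ depending only on $J$, and since $\tau_T(s_1)$ is diagonal, the elementary inequality $\|Dw\|_\infty \ge (\min_j |d_j|)\|w\|_\infty$ gives $\|\tau_T(s_1)(u-v)\|_\infty \ge c_1 \|u-v\|_\infty$. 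Combining both estimates produces the desired lower bound. I do not anticipate any genuine obstacle here: the statement is simply a quantitative expression of the fact that on the relevant scale $\log T$, the family of affine maps $\beta_T(\cdot,s)$ is a uniformly small perturbation (by an $O_J(1)$ translation) of a single fixed bi-Lipschitz linear map $\widetilde{\beta}_T$, because both the shape deformation $\tau_T(s) - \tau_\infty$ and the $s$-drift $\tau_T(s_1)-\tau_T(s_2)$ decay like $1/\log T$ while $\widetilde{Q}_{T,i}$ only extends up to size $\log T$.
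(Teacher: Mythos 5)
Your proposal is correct and follows the same route as the paper, which disposes of the lemma in one line by citing the expansion $\tau_T(s)=\tau_\infty+O(1/\log T)$ (equation \eqref{deftauinfty}) together with the bound $\|u\|\ll\log T$ on $\widetilde{Q}_{T,i}$; you have simply written out the cancellation of $v_T$, the splitting $\tau_T(s_1)(u-v)+(\tau_T(s_1)-\tau_T(s_2))v$, and the uniform lower bound on the diagonal entries of $\tau_T(s_1)$ that the paper leaves implicit. No gaps.
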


\vspace{0.2cm}

\begin{proof}
Since $\|u\| \ll \log T$ for all $u \in \widetilde{Q}_{T,i}$,
this lemma follows immediately from \eqref{deftauinfty} and the definitions of the maps
$\beta_T$ and $\tilde \beta_T$.
\end{proof}

\begin{remark}
While in Section \ref{sec:outline} we have allowed $\beta_{T}$ and $\widetilde{\beta}_{T}$ to also depend on $i$, it is not necessary at this point. However, to properly work with these functions in our
setting, we also need to define the finite measure spaces $(Y_{T,i},\kappa_{T,i})$, for $i= 1,2$. This will be done in the next section. 
\end{remark}

Let us now rewrite the decomposition in Lemma \ref{l:tile} using the standard coordinates. We set
\begin{equation}
\label{defQTSitilde}
Q_{T,i}(s) := \beta_T\big(\widetilde{Q}_{T,i},s\big) 
\end{equation}
and
$$
\widetilde{h}_{T,i}(u,s,\xi): = \chi_{\cS_i}\left(\tau_T(s)^{-1}u\right) \, \chi_{\log I}(s) \, \chi_B(\xi),
$$
for $i =1,2$, and note that the assertion in the lemma above can be written as
\begin{align}
\chi_{\Delta_T}(u,s,\xi) 
=&
\sum_{w \in \tilde Q_{T,1}} \widetilde{h}_{T,1}(u+\beta_T(w,s),s,\xi)
+ 
\sum_{w \in \tilde Q_{T,2}} \widetilde{h}_{T,2}(u+\beta_T(w,s),s,\xi) \label{DeltaTh}\\
=& 
\sum_{v \in Q_{T,1}(s)} \widetilde{h}_{T,1}(u+v,s,\xi)  + \sum_{v \in Q_{T,2}(s)} \widetilde{h}_{T,2}(u+v,s,\xi) \nonumber 
\end{align}
for all large enough $T$. Let us now set
\[
h_{T,i}: = \widetilde{h}_{T,i} \circ \pid, \quad \textrm{ for $i = 1,2$}.
\]
Since $\chi_{\Omega_T} = \chi_{\Delta_T} \circ \pid$, the equivariance \eqref{equiad} 
of $\pid$ yields the following corollary of Lemma \ref{l:tile}:

\vspace{0.2cm}

\begin{corollary}
\label{cor_exacttiling}
For all large enough $T$, 
\[
\chi_{\Omega_T}(z) = \sum_{v \in Q_{T,1}(\sd(z))} h_{T,1}(\aad(v)z) + \sum_{v \in Q_{T,2}(\sd(z))} h_{T,2}(\aad(v)z),\quad \hbox{for $z \in \bR_{*}^{\bd}$.}
\]

\end{corollary}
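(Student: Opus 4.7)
The claim is essentially a change-of-variables restatement of Lemma \ref{l:tile} (as repackaged in equation \eqref{DeltaTh}), so the plan is to do nothing more than trace the definitions through the coordinate map $\pi$. The only genuine content lies in Lemma \ref{l:tile}, which has already been established; here we simply need to pull back its conclusion along $\pi$ and use the $A$-equivariance \eqref{equiad}.

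Concretely, I would argue as follows. Fix $T > e^{\sup(I)/d}$, and let $z \in \bR_*^{\bd}$ with coordinates $(u,s,\xi) := \pi(z)$. Since $\chi_{\Omega_T} = \chi_{\Delta_T} \circ \pi$, the first step is to invoke the second form of the tiling in \eqref{DeltaTh}, namely
\[
\chi_{\Delta_T}(u,s,\xi) = \sum_{v \in Q_{T,1}(s)} \widetilde{h}_{T,1}(u+v,s,\xi) + \sum_{v \in Q_{T,2}(s)} \widetilde{h}_{T,2}(u+v,s,\xi),
\]
which is valid for every $(u,s,\xi)$ once $T > e^{\sup(I)/d}$: if $s \in \log I$ this is Lemma \ref{l:tile}, and if $s \notin \log I$ both sides vanish thanks to the factor $\chi_{\log I}(s)$ sitting inside each $\widetilde{h}_{T,i}$ together with the containment $\Delta_T \subset \bR^{k-1}\times \log I \times \bS_{\bd}$.

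The second step is to replace the additive shifts $u \mapsto u+v$ by the action of $a(v)$ on $z$. By the equivariance relation \eqref{equiad},
\[
(u+v,\,s,\,\xi) \;=\; \bigl(u(z)+v,\,s(z),\,\xi(z)\bigr) \;=\; \pi\bigl(a(v)z\bigr),
\]
so that, by the very definition $h_{T,i} = \widetilde{h}_{T,i} \circ \pi$,
\[
\widetilde{h}_{T,i}(u+v,s,\xi) \;=\; \widetilde{h}_{T,i}\bigl(\pi(a(v)z)\bigr) \;=\; h_{T,i}(a(v)z).
\]
Substituting this into the previous display and using $s = s(z)$ in the index sets $Q_{T,i}(s) = Q_{T,i}(s(z))$ yields the stated identity. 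The bound on $T$ that is hidden in the phrase "large enough" is exactly $T > e^{\sup(I)/d}$, coming from Lemma \ref{l:tile} (equivalently, from the need to ensure $\log I \subset (-\infty, d\log T)$ so that the matrix $\tau_T(s)$ in \eqref{deftau} has positive entries for all relevant $s$).

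There is no real obstacle here — the argument is purely bookkeeping. The only point that deserves a moment of care is checking that the identity persists \emph{everywhere} on $\bR_*^{\bd}$ (and not merely on $\Omega_T$ or on the slice $s \in \log I$), since the statement quantifies over all $z \in \bR_*^{\bd}$; this is handled by the observation above that both sides vanish outside $\{s(z) \in \log I\}$ once $T > e^{\sup(I)/d}$.
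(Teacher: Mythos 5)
Your proposal is correct and follows essentially the same route as the paper: the paper also obtains the corollary immediately from Lemma \ref{l:tile} (in the form \eqref{DeltaTh}), using $\chi_{\Omega_T}=\chi_{\Delta_T}\circ\pi$, the definitions $h_{T,i}=\widetilde{h}_{T,i}\circ\pi$ and $Q_{T,i}(s)=\beta_T(\widetilde{Q}_{T,i},s)$, and the equivariance \eqref{equiad}. Your extra remarks on why the identity persists off the slice $s(z)\in\log I$ and on the explicit threshold $T>e^{\sup(I)/d}$ are consistent with the paper's statement of Lemma \ref{l:tile}.
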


\vspace{0.2cm}

We stress that the summation range in the above formula depend on the point $z$, albeit in a weak way via $\sd(z)$.
In the next subsection, we will get rid of this $z$-dependence upon introducing an additional average. The price we have to pay for this
is that the functions $h_{T,i}$ will be replaced with more complicated  functions $f_{T,i}$, which depend on the an extra variable, 
coming from the average. 

\subsubsection{Construction of the spaces $(Y_{T,i},\kappa_{T,i})$ and functions $f_{T,i}$ (assumptions (I.a)--(I.b))}


If $\cT \subset \bR^{k-1}$
is a subset and $r \geq 0$, we denote by $\cT_r$ the $r$-thickening of $\cT$ with respect to this norm. Similarly, for a subset $B$ of $\bS_{\bd}$,
we denote by $B_r$ the $r$-thickening of $B$ 
with respect to the rotation-invariant metric on $\bS_{\bd}$. \\

Since $|v| \ll \log T$
for every $v \in \widetilde{Q}_{T,i}$, it follows from \eqref{deftauinfty} 
that for any bounded interval $J \subset \bR$, there exist $c(J)>0$ such that
for all $s,t \in J$, $T \ge T_0(J)$, and $v \in \widetilde{Q}_{T,i}$,
\[
 \big\|\tau_T(s)^{-1}(\beta_T(v,s) - \beta_T(v,t))\big\|=
 \big\|\tau_T(s)^{-1}(\tau_T(s)v - \tau_T(t)v)\big\| \leq c(J)\, |s-t|.
\]
Hence, we deduce that for all $s,t \in J$ satisfying $|s-t| \leq r$,
$T \ge T_0(J)$, $u\in \mathbb{R}^{k-1}$, and $v \in \widetilde{Q}_{T,i}$,
\begin{equation}
\label{why}
\chi_{S_i}\big(\tau_T(s)^{-1}(u + \beta_T(v,s))\big)  \leq \chi_{(S_i)_{c(J) r}}\big(\tau_T(s)^{-1}(u + \beta_T(v,t))\big).
\end{equation}
Let us now introduce a parameter $\eps\in (0,1)$ and a non-negative real smooth function $\rho_\eps$ on $\bR$ with
\begin{equation}\label{eq:rrrr}
\supp (\rho_\eps) \subset [-\eps/2,\eps/2] \qand \int_{\bR} \rho_\eps(t) \, dt = 1.
\end{equation}
For future reference, we also note that $\rho_\eps$ can be chosen, so that 
\begin{equation}
\label{eq:norm1}
\|\rho_{\eps_T}\|_{C^q}\ll \eps^{-1-q}.
\end{equation}
By the standard properties of convolutions,
$$
\chi_{\log I} \leq \rho_\eps * \chi_{(\log I)_\eps} \leq \chi_{(\log I)_{2\eps}}.
$$
Then, using \eqref{why}, we deduce that for every $u\in\mathbb{R}^{k-1}$ and $v \in \widetilde{Q}_{T,i}$,
\begin{eqnarray*}
\widetilde{h}_{T,i}\big(u + \beta_T(v,s),s,\xi\big) 
&=& 
\chi_{S_i}\big(\tau_T(s)^{-1}(u + \beta_T(v,s))\big) \chi_{\log I}(s) \, \chi_B(\xi) \\
&\leq &
\int_{(\log I)_\eps} \chi_{S_i}\big(\tau_T(s)^{-1}(u + \beta_T(v,s))\big) \rho_\eps(s-t) \, \chi_B(\xi) \, dt \\
&= &
\int_{(\log I)_\eps} \chi_{(S_i)_{c \eps}}\big(\tau_T(s)^{-1}(u + \beta_T(v,t))\big) \, \rho_\eps(s-t) \, \chi_B(\xi) \, dt,
\end{eqnarray*}
where $c=c(J)>0$ for a fixed bounded interval $J$ which contains $(\log I)_\eps$ for all $0 < \eps < 1$. 
Let $\psi_{i,\eps}$ be a smooth function on $\bR^{k-1}$ such that
\begin{equation}\label{eq:chi_0}
\chi_{(\cS_i)_{c\eps}} \leq \psi_{i,\eps} \leq \chi_{(S_i)_{2c\eps}}, \quad \textrm{for $i = 1,2$},
\end{equation}
and let $\vartheta_\eps$ be a smooth function on $\bS_{\bd}$ such that
\begin{equation}\label{eq:chi_1}
\chi_{B} \leq \vartheta_{\eps} \leq \chi_{B_{\eps}}.
\end{equation}
For future reference, we note that these functions can be constructed, so that
\begin{equation}\label{eq:norm2}
\|\psi_{i,\eps}\|_{C^q}\ll \eps^{-1-q}\quad\hbox{and}\quad \|\vartheta_{\eps}\|_{C^q}\ll \eps^{-\theta_q}\quad \hbox{for some $\theta_q>0$.}
\end{equation}
From the above estimate, we deduce that for every $u\in\mathbb{R}^{k-1}$ and $v \in \widetilde{Q}_{T,i}$,
\begin{equation}\label{eq:hhh}
\widetilde{h}_{T,i}\big(u + \beta_T(v,s),s,\xi\big) 
\leq 
\int_{(\log I)_\eps} \psi_{i,\eps}\big(\tau_T(s)^{-1}(u + \beta_T(v,t))\big) \, \rho_\eps(s-t) \, \vartheta_\eps(\xi) \, dt.
\end{equation}
By the same argument as in \eqref{why}, we also have
for all $s,t \in J$ satisfying $|s-t| \leq \eps$,
$T \ge T_0(J)$, $u\in \mathbb{R}^{k-1}$, and $v \in \widetilde{Q}_{T,i}$,
\[
\chi_{(S_i)_{2 c \eps}}\big(\tau_T(s)^{-1}(u + \beta_T(v,t)\big)  \leq \chi_{(S_i)_{ 3 c \eps}}\big(\tau_T(s)^{-1}(u + \beta_T(v,s))\big).
\]
Then it follows from \eqref{eq:chi_0} and \eqref{eq:chi_1} that 
\begin{eqnarray}\label{3eps}
\widetilde{h}_{T,i}\big(u + \beta_T(v,s),s,\xi\big) 
&\leq & 
\int_{(\log I)_\eps} \psi_{i,\eps}\big(\tau_T(s)^{-1}(u + \beta_T(v,t))\big) \, \rho_\eps(s-t) \, \vartheta_\eps(\xi) \, dt,
\end{eqnarray}
and 
\begin{align}
&\int_{(\log I)_\eps} \psi_{i,T}\big(\tau_T(s)^{-1}(u + \beta_T(v,t))\big) \, \rho_\eps(s-t) \, \vartheta_\eps(\xi) \, dt \nonumber \\
\leq\, &
\int_{(\log I)_{\eps}} \chi_{(\cS_i)_{2c \eps}}\big(\tau_T(s)^{-1}(u + \beta_T(v,t))\big) \, \rho_\eps(s-t) \, \vartheta_\eps(\xi) \, dt \nonumber \\
\leq\, &
\int_{(\log I)_{\eps}} \chi_{(\cS_i)_{3c \eps}}\big(\tau_T(s)^{-1}(u + \beta_T(v,s))\big) \, \rho_\eps(s-t) \, \vartheta_\eps(\xi) \, dt \nonumber \\
\leq\, &
\chi_{(\cS_i)_{3c \eps}}\big(\tau_T(s)^{-1}(u + \beta_T(v,s))\big) \, \chi_{(\log I)_{2\eps}}(s) \, \chi_{B_\eps}(\xi). \label{3eps_1}
\end{align}

\medskip

We introduce a parameter $\eps_T\in (0,1)$, to be specified later, and define 
\begin{equation}
\label{eq:Y}
Y_{T} := (\log I)_{\eps_T} \qand \kappa_T := \Leb \mid_{Y_T}.
\end{equation}
For $y \in Y_{T}$, we set
\begin{equation}
\label{deffTi}
\widetilde{f}_{T,i}\big((u,s,\xi),y\big) := \psi_{i,\eps_T}(\tau_T(s)^{-1}u) \, \rho_{\eps_T}(s-y) \, \vartheta_{\eps_T}(\xi)
\end{equation}
and consider
\begin{align*}
\widetilde{F}_T(u,s,\xi)
:=& 
\int_{Y_T} \Big( \sum_{w \in Q_{T,1}(y)} \widetilde{f}_{T,1}\big((u+w,s,\xi),y\big) \Big) \, d\kappa_T(y) \\
&+
\int_{Y_T} \Big( \sum_{w \in Q_{T,2}(y)} \widetilde{f}_{T,2}\big((u+w,s,\xi),y\big) \Big) \, d\kappa_T(y). 
\end{align*}
It follows from \eqref{eq:hhh} that for every $u\in\mathbb{R}^{k-1}$ and $v \in \widetilde{Q}_{T,i}$,
\[
\widetilde{h}_{T,i}\big(u + \beta_T(v,s),s,\xi\big) \leq \int_{Y_T} \widetilde{f}_{T,i}\big((u + \beta_T(v,y),s,\xi),y\big) \, d\kappa_{T}(y).
\]
Hence, by \eqref{DeltaTh} and  \eqref{3eps},
\begin{align*}
\chi_{\Delta_T}(u,s,\xi) 
\leq & 
\int_{Y_T} \Big( \sum_{v \in \widetilde{Q}_{T,1}} \widetilde{f}_{T,1}\big((u+\beta_T(v,y),s,\xi),y\big) \Big) \, d\kappa_T(y) \\
&\quad +
\int_{Y_T} \Big( \sum_{v \in \widetilde{Q}_{T,2}} \widetilde{f}_{T,2}\big((u+\beta_T(v,y),s,\xi),y\big) \Big) \, d\kappa_T(y)\\
=&\,\widetilde{F}_T(u,s,\xi).
\end{align*}
Let 
\begin{align}\label{eq:chiplus}
\chi_{\Delta_T}^{+}(u,s,\xi)
:=&
\sum_{v \in \widetilde{Q}_{T,1}} \chi_{(\cS_1)_{3 c \eps_T}}\big(\tau_T(s)^{-1}(u + \beta_T(v,s))\big) \, \chi_{(\log I)_{2\eps_T}}(s) \, \chi_{B_{\eps_T}}(\xi)  \\
&\;\;\;\;+
\sum_{v \in \widetilde{Q}_{T,2}} \chi_{(\cS_2)_{3 c \eps_T}}\big(\tau_T(s)^{-1}(u + \beta_T(v,s))\big) \, \chi_{(\log I)_{2\eps_T}}(s) \, \chi_{B_{\eps_T}}(\xi).\nonumber
\end{align}
Then it follows from \eqref{3eps_1} that 
$$
\widetilde{F}_T(u,s,\xi) \le \chi_{\Delta_T}^{+}(u,s,\xi).
$$
We conclude that 
\begin{equation}
\label{eq:fff}
\chi_{\Delta_T}\le \widetilde{F}_T\le \chi_{\Delta_T}^{+}.
\end{equation}
The estimate indicates that $\widetilde{F}_T$ provides 
an approximation for the characteristic function $\chi_{\Delta_T}$.
Let us now define $f_{T,i} : \bR^d \times \bR \ra [0,\infty)$ by
\begin{equation}\label{eq:f_tilde}
f_{T,i}(z,y) = \widetilde{f}_{T,i}(\pid(z),y) \quad \textrm{for $z \in \bR_*^{\bd}$}\;\;\hbox{and}\;\; y \in Y_{T},
\end{equation}
and $f_{T,i}(z,y) := 0$ for all $z \in \bR^d \setminus \bR_*^{\bd}$. Then $f_{T,i}$ is smooth in the $z$-coordinate. We also set 
\begin{equation}
\label{defFT2}
F_T := \widetilde{F}_{T} \circ \pid.
\end{equation} 
From \eqref{equiad} we see that the function $F_T$ can be written as
\begin{equation}
\label{specialformFT}
F_T(z) = \int_{Y_T} \Big( \sum_{v \in Q_{T,1}(y)} f_{T,1}\big(\aad(v)z,y\big) \Big) \, d\kappa_T(y) \\
+
\int_{Y_T} \Big( \sum_{v \in Q_{T,2}(y)} f_{T,2}\big(\aad(v)z,y\big) \Big) \, d\kappa_T(y),
\end{equation}
which is exactly the form of functional tiling analyzed in Section \ref{sec:general}. \\

The following lemma demonstrates that the function $F_T$ proves a good approximation 
for the characteristic function $\chi_{\Omega_T}=\chi_{\Delta_T}\circ \pi$. 

\vspace{0.2cm}

\begin{lemma}
	\label{lemma_eta}
	For $\eps_T = \Vol(\Omega_T)^{-\eta}$ with $\eta>p/2$, then
	\[
	\big\|\chi_{\Omega_T} - F_T\big\|_{L^p} = o\big(\Vol(\Omega_T)^{1/2}\big)\quad\hbox{as $T\to\infty$}.
	\] 
\end{lemma}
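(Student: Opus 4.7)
The plan is to combine a pointwise sandwich for $F_T$ with a volume computation, and then interpolate between $L^1$ and $L^\infty$ bounds. The starting point is the relation $\chi_{\Delta_T} \leq \widetilde{F}_T \leq \chi^+_{\Delta_T}$ established in \eqref{eq:fff}, which after composition with $\pid$ and using \eqref{defFT2} gives $\chi_{\Omega_T} \leq F_T \leq \chi^+_{\Delta_T}\circ \pid$ pointwise on $\bR^{\bd}_*$. In particular $F_T - \chi_{\Omega_T} \geq 0$.

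First I would show that $\|F_T - \chi_{\Omega_T}\|_{L^\infty} = O(1)$ uniformly in $T$. Since the sets $\cS_1, \cS_2 \subset [-1,0]^{k-1}$ are bounded and the shifts $v \in \widetilde{Q}_{T,i} \subset \bZ^{k-1}$ are integer, for each fixed $(u,s,\xi)$ only $O(1)$ terms in the sum \eqref{eq:chiplus} defining $\chi^+_{\Delta_T}$ are non-zero (for $\eps_T$ small). Hence $\chi^+_{\Delta_T}$, and therefore $F_T$, is uniformly bounded.

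Next I would compute the $L^1$ error by integrating the pointwise bound. Using Lemma \ref{lemma_volumeint} and the change of variable $w = \tau_T(s)^{-1}(u + \beta_T(v,s))$,
\[
\int_{\bR^d} F_T \, dz \;\leq\; \frac{1}{d_k}\int \chi^+_{\Delta_T}(u,s,\xi)\, e^s \, du\, ds\, d\kd(\xi)
= \sum_{i=1,2} |\widetilde{Q}_{T,i}|\, \Vol\big((\cS_i)_{3c\eps_T}\big)\, \kd(B_{\eps_T}) \cdot J_T,
\]
where $J_T := \frac{1}{d_k}\int_{(\log I)_{2\eps_T}} |\det\tau_T(s)|\, e^s\, ds$. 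Since $\Vol\big((\cS_i)_{3c\eps_T}\big) = \Vol(\cS_i) + O(\eps_T)$, the smoothness assumption on $\partial B$ gives $\kd(B_{\eps_T}) = \kd(B) + O(\eps_T)$, and $\Leb\big((\log I)_{2\eps_T}\big) = \Leb(\log I) + O(\eps_T)$, while $|\widetilde{Q}_{T,i}| \ll (\log T)^{k-1} \ll \Vol(\Omega_T)$ by Lemma \ref{l:p} and Lemma \ref{lemma_volume}. Comparing this expression with $\Vol(\Omega_T)$ computed in Lemma \ref{lemma_volume} (using that $\sum_i |P_{N,i}| \Vol(\cS_i)$ reconstructs $\Vol(\cS(N))$ up to $O(N^{k-2})$), the leading terms cancel and the error is of size $O(\eps_T \Vol(\Omega_T))$. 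Therefore $\|F_T - \chi_{\Omega_T}\|_{L^1} = \int(F_T - \chi_{\Omega_T})\, dz = O(\eps_T \Vol(\Omega_T))$.

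Finally I would interpolate: using the bound from the first step together with the $L^1$ estimate,
\[
\|F_T - \chi_{\Omega_T}\|_{L^p}^p \;\leq\; \|F_T - \chi_{\Omega_T}\|_{L^\infty}^{p-1}\, \|F_T - \chi_{\Omega_T}\|_{L^1} \;\ll\; \eps_T\, \Vol(\Omega_T).
\]
With $\eps_T = \Vol(\Omega_T)^{-\eta}$ this gives $\|F_T - \chi_{\Omega_T}\|_{L^p} \ll \Vol(\Omega_T)^{(1-\eta)/p}$, which is $o\big(\Vol(\Omega_T)^{1/2}\big)$ as soon as $(1-\eta)/p < 1/2$; the hypothesis $\eta > p/2$ comfortably implies this. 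The main delicate point is the bookkeeping in the volume computation: one must ensure that the error coming from the three independent thickenings (in $u$, $s$, and $\xi$) is simultaneously $O(\eps_T)$ after translating back through $\pid$ with the extra $e^s$ factor, and that the near-tiling of $\cS(\lfloor \log T \rfloor)$ by the shifted $\cS_i$'s produces only $O(\eps_T \log^{k-1} T)$ overlap error rather than something larger.
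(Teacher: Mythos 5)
Your argument is correct, but it follows a genuinely different route from the paper. The paper estimates $\|\chi_{\Delta_T}-\widetilde F_T\|_{L^p}\le\|\chi^+_{\Delta_T}-\chi_{\Delta_T}\|_{L^p}$ by a term-by-term triangle inequality over the tiles: for each $v\in\widetilde Q_{T,i}$ the three thickening errors (in $u$, in $s$, and in $\xi$) each contribute $O(\eps_T^{1/p})$ in $L^p$, and summing over the $\ll\Vol(\Omega_T)$ tiles gives $\|\chi_{\Omega_T}-F_T\|_{L^p}\ll\Vol(\Omega_T)\,\eps_T^{1/p}$, which is where the hypothesis $\eta>p/2$ is used. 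You instead exploit the sandwich \eqref{eq:fff} together with nonnegativity of $F_T-\chi_{\Omega_T}$ to reduce the $L^1$ norm to a single global volume comparison, add a uniform $L^\infty$ bound coming from the bounded overlap of the integer translates of $(\cS_i)_{3c\eps_T}$ in \eqref{eq:chiplus} (at most $O_k(1)$ nonzero terms at any point, since $\widetilde Q_{T,i}\subset\bZ^{k-1}$ and the thickened tiles sit in a box of side $\le 2$), and then interpolate $\|g\|_{L^p}^p\le\|g\|_{L^\infty}^{p-1}\|g\|_{L^1}$. This yields the sharper bound $\|\chi_{\Omega_T}-F_T\|_{L^p}\ll(\eps_T\Vol(\Omega_T))^{1/p}$, so your proof in fact only needs $\eta>1-p/2$, which is weaker than the stated hypothesis $\eta>p/2$; both approaches prove the lemma as stated.

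One small caution in your $L^1$ step: the cancellation of leading terms against $\Vol(\Omega_T)$ should use the \emph{exact} disjoint-union identity of Lemma \ref{l:p}, namely $\Vol_{k-1}(\cS(N))=|P_{N,1}|\Vol_{k-1}(\cS_1)+|P_{N,2}|\Vol_{k-1}(\cS_2)$ with $N=\lfloor\log T\rfloor$, combined with $\delta_T(s)\cS_1=\tau_T(s)\cS(N)$ in the volume formula of Lemma \ref{lemma_volume}. Your parenthetical allowance of an $O(N^{k-2})$ reconstruction error would, taken literally, inject an extra $L^1$ error of order $\Vol(\Omega_T)^{1-1/(k-1)}$, which dominates $\eps_T\Vol(\Omega_T)$ and after interpolation would fail to give $o(\Vol(\Omega_T)^{1/2})$ for $p=1$ when $k\ge 3$. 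Since the tiling is exactly disjoint, the slack is zero and your stated estimate $\|F_T-\chi_{\Omega_T}\|_{L^1}=O(\eps_T\Vol(\Omega_T))$ is valid; just phrase that step using the exact identity rather than an approximate one.
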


\vspace{0.2cm}

\begin{proof}
We shall use the integral formula from Lemma \ref{lemma_volume}.
From \eqref{eq:fff},
\begin{align*}
\big\|\chi_{\Delta_T} - \tilde F_T\big\|_{L^p}
&\le \big\|\chi^+_{\Delta_T} -\chi_{\Delta_T} \big\|_{L^p}\\
&\ll
\left(\int_{\bS_{\bd}} \int_{\bR^{k-1}} \int_{\bR} \big| \chi^+_{\Delta_T}(u,s,\xi) - \chi_{\Delta_T}(u,s,\xi) \big|^p \, e^{s} \, ds \, du \, d\kd(\xi)\right)^{1/p}.
\end{align*}
We recall that $\chi^+_{\Delta_T}$ and $\chi_{\Delta_T}$
are given by \eqref{eq:chiplus} and \eqref{DeltaTh} respectively.
By successive use of the triangle-inequality, this expression
is less than $A_1 + A_2$, where 
$$
A_i:=\sum_{v\in \widetilde{Q}_{T,i}} (A_{i,1}(v)+A_{i,2}(v)+A_{i,3}(v))
$$ 
with
\begin{align*}
A_{i,1}(v) &:=  \left(\int_{\bS_{\bd}} \int_{\bR^{k-1}} \int_{\bR} \chi_{(S_i)_{3 c \eps_T} \setminus S_i}\big(\tau_T(s)^{-1}(u + \beta_T(v,s))\big) \chi_{J}(s) \, \chi_{B_{\eps_T}}(\xi) \, e^{s} \, ds \, du \, d\kd(\xi)\right)^{1/p},\\
A_{i,2}(v) &:=  \left(\int_{\bS_{\bd}} \int_{\bR^{k-1}} \int_{\bR} \chi_{(S_i)_{3 c \eps_T}}\big(\tau_T(s)^{-1}(u + \beta_T(v,s))\big) 
\chi_{(\log I)_{2\eps_T} \setminus (\log I)_{\eps_T}}(s) \, \chi_{B_{\eps_T}}(\xi) \, e^{s} \, ds \, du \, d\kd(\xi)\right)^{1/p}, \\
A_{i,3}(v) &:=  \left(\int_{\bS_{\bd}} \int_{\bR^{k-1}} \int_{\bR} \chi_{(S_i)_{3 c \eps_T}}\big(\tau_T(s)^{-1}(u + \beta_T(v,s))\big) \chi_{J}(s) \, 
\chi_{B_{\eps_T} \setminus B}(\xi) \, e^{s} \, ds \, du \, d\kd(\xi)\right)^{1/p}.
\end{align*}
Since
$$
\Leb_{k-1}\big(\tau_T(s)((S_i)_{3 c \eps} \setminus S_i)\big) \ll \eps
$$
uniformly over $s\in J$  and sufficiently large $T$,
we conclude that $A_{i,1}(v)\ll \eps_T^{1/p}$ uniformly over $v$.
Also since 
$$
\Leb_{k-1}\big(\tau_T(s)(S_i)_{3 c \eps}\big) \ll 1
$$
uniformly over $s\in J$  and sufficiently large $T$, and 
$$
\Leb_1\big((\log I)_{2\eps} \setminus (\log I)_{\eps}\big) \ll \eps\quad\hbox{and}\quad
\kd(B_{\eps} \setminus B) \ll \eps,
$$
we deduce that $A_{i,2}(v)+A_{i,3}(v)\ll \eps_T^{1/p}$ uniformly on $v$. Therefore,
\[
\big\|\chi_{\Delta_T} - \tilde F_T\big\|_{L^p} 
\ll \big(|\widetilde{Q}_{T,1}|+|\widetilde{Q}_{T,2}|\big) \, \eps_T^{1/p} \ll \Vol(\Omega_T) \, \eps_T^{1/p}.
\]
Hence, when $\eps_T = \Vol(\Omega_T)^{-\eta}$ with $\eta>p/2$, 
we have $\|\chi_{\Omega_T} - F_T\|_{L^p} = o\big(\Vol(\Omega_T)^{1/2}\big).$
\end{proof}

\subsubsection{Construction of the maps $h_{T,i}$ (assumption (II.c))}\label{sec:h}

Let us now turn to the construction of the maps $h_{T,i}$
satisfying the condition (II.c).  
We recall that $h_{T,i}$ should be non-negative 
Borel functions on $\bR^d \times Y_{T,i}$ satisfying
\[
f_{T,i}\big(\aad(\beta_T(v,y))z,y\big) \leq h_{T,i}\big(\aad(\widetilde{\beta}_T(v))z,y\big)
\]
for all $v \in \widetilde{Q}_{T,i}$,  $z \in \bR^d$, and $y \in Y_{T,i}$.
Moreover, we arrange that the supports the functions $x \mapsto h_{T,i}(x,y)$ lie in a fixed compact set, independent of $y \in Y_{T}$, and 
\[
\sup_{z,T} \int_{Y_{T}} h_{T,i}(z,y) \, d\kappa_{T}(y) < \infty.
\]
We shall use the coordinate system \eqref{defphi}. 
Then in view of \eqref{eq:f_tilde}, 
it is sufficient to construct non-negative Borel functions $\widetilde{g}_{T,i}$ on $(\bR^{k-1} \times \bR \times \bS_{\bd}) \times Y_T$ such that 
\begin{equation}\label{eq:est0}
\widetilde{f}_{T,i}\big((u + \beta_T(v,y),s,\xi),y\big) \leq \widetilde{g}_{T,i}\big((u + \widetilde{\beta}_T(v),s,\xi),y\big),
\end{equation}
for all $v \in \widetilde{Q}_{T,i}$ and $(u,s,\xi) \in \bR^{k-1} \times \bR \times \bS_{\bd}$ and $y \in Y_{T}$, 
whose supports lie in a set $\cK \times Y_{T,i}$, with a fixed compact $\cK \subset \bR^{k-1} \times \bR \times \bS_{\bd}$, and such that
\[
\sup_{(u,s,\xi), T} \int_{Y_{T}} \widetilde{g}_{T,i}\big((u,s,\xi),y\big) \, d\kappa_{T}(y) <\infty.
\]
Indeed, if such maps have been constructed, we can simply set $h_{T,i} = \widetilde{g}_{T,i} \circ \pid$.
We recall from \eqref{deffTi} that
\[
\widetilde{f}_{T,i}\big((u,s,\xi),y\big) = \psi_{i,\eps_T}(\tau_T(s)^{-1}u) \, \rho_{\eps_T}(s-y) \, \vartheta_{\eps_T}(\xi),
\]
where $\psi_{i,\eps_T}$ satisfies
\[
\chi_{(\cS_i)_{c\eps_T}} \leq \psi_{i,\eps_T} \leq \chi_{(S_i)_{2c\eps_T}}.
\]
By Lemma \ref{lemma_beta}(ii), there is a compact set $\cD_0 \subset \bR^{k-1}$ such that \[
\beta_{T}(v,y) - \widetilde{\beta}_T(v) \in \cD_0, \quad \textrm{for all $v \in \widetilde{Q}_{T,i}$, $y \in Y_{T}$, and $T\ge T_0(J)$}.
\]
Furthermore, by the construction of the map $\tau_T$ in \eqref{deftau}, there exists a compact set $\cD \subset \bR^{k-1}$
such that 
\[
\tau_T(s) \Big( (S_i)_{2c \eps_T} - \cD_0\Big) \subset \cD, \quad \textrm{for all $s \in J$ and sufficently large $T$}.
\]
Hence, for all $s \in J$, $u\in\bR^{k-1}$, $v \in \widetilde{Q}_{T,i}$, $y\in Y_T$,  and sufficiently large $T$,
\begin{eqnarray*}
\psi_{i,\eps_T}\left(\tau_T(s)^{-1}(u + \beta_T(v,y))\right)
& \leq &
\chi_{(S_i)_{2c \eps_T}}\left(\tau_T(s)^{-1}(u + \widetilde{\beta}_T(v) + \beta_{T}(v,y)-\widetilde{\beta}_T(v))\right) \\ 
&\leq &
\chi_{(S_i)_{2c \eps_T} - \cD_0}\left(\tau_T(s)^{-1}(u + \widetilde{\beta}_T(v))\right) \\
&\leq &
\chi_{\cD}(u + \widetilde{\beta}_T(v)).
\end{eqnarray*}
Let us now define
\begin{equation*}
\widetilde{g}_{T,i}\big((u,s,\xi),y\big) := \chi_{\cD}(u) \, \rho_{\eps_T}(s-y) \, \vartheta_{\eps_T}(\xi).
\end{equation*}
Then the estimate \eqref{eq:est0} clearly holds. Furthermore, 
\[
\int_{Y_{T,i}} \widetilde{g}_{T,i}((u,s,\xi),y) \, d\kappa_T(y) \le \int_{J}  \chi_{\cD}(u) \rho_{\eps_T}(s-y) \, \vartheta_{\eps_T}(\xi) \, dy
\leq \chi_{\cD}(u) \chi_{J_{\eps_T}}(s) \, \chi_{B_{\eps_T}}(\xi),
\]
which is clearly compactly supported and bounded, uniformly in $T$.

\subsection{Estimation of the function norms}
\label{subsec:MTq}

In order to apply our general result from the previous section (Theorem \ref{thm_criterion}),
We have to estimate the norms of the functions $f_{T,i}$, specifically, the quantities
$M_{T}$ and $M_{T,q}$ defined in \eqref{defM0_0}--\eqref{defM0}.

\vspace{0.2cm}

\begin{lemma}\label{l:norms}
For the functions $f_{T,i}$ defined in \eqref{eq:f_tilde},
$$
{M}_{T} \ll \eps_T^{-1}\quad\hbox{and}\quad {M}_{T,q} \ll \eps_T^{-r_q}
$$ 
with $r_q>0$.
\end{lemma}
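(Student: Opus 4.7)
The plan is to unwind the definition
$$f_{T,i}(z,y) = \widetilde{f}_{T,i}(\pi(z),y) = \psi_{i,\eps_T}\bigl(\tau_T(s)^{-1}u\bigr)\,\rho_{\eps_T}(s-y)\,\vartheta_{\eps_T}(\xi), \quad (u,s,\xi)=\pi(z),$$
and to estimate each factor separately using the bounds \eqref{eq:norm1} and \eqref{eq:norm2}, together with the normalisations \eqref{eq:chi_0}, \eqref{eq:chi_1}. For $M_T$, observe that $\psi_{i,\eps_T}\le 1$ and $\vartheta_{\eps_T}\le 1$ while $\|\rho_{\eps_T}\|_\infty \ll \eps_T^{-1}$ by \eqref{eq:norm1} with $q=0$, so $\|f_{T,i}(\cdot,y)\|_\infty \ll \eps_T^{-1}$ uniformly in $y$. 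Since $Y_T = (\log I)_{\eps_T}$ has Lebesgue measure bounded uniformly in $T<\infty$, integrating in $y$ preserves this estimate and yields $M_T \ll \eps_T^{-1}$.

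For $M_{T,q}$, first record that the $z$-support of $f_{T,i}(\cdot,y)$ is contained in a fixed compact set $\cK \subset \bR_*^{\bd}$: indeed, $\widetilde{f}_{T,i}(\cdot,y)$ is supported on $(u,s,\xi)$ with $\tau_T(s)^{-1}u$ in a bounded neighbourhood of $\cS_i$, with $s\in (\log I)_{2\eps_T}$ bounded, and with $\xi\in\bS_{\bd}$, and pushing this through the smooth inverse $\pi^{-1}$ from \eqref{defpsi} lands in a fixed compact set bounded away from $0$. On $\cK$, the map $\pi$ together with all its partial derivatives of order $\le q$ are uniformly bounded, and by \eqref{deftauinfty} the matrix $\tau_T(s)^{-1}$ (and its $s$-derivatives) is bounded uniformly in $T$. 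Applying the chain rule and Leibniz rule, each derivative $\partial_\beta f_{T,i}$ with $|\beta|\le q$ is bounded by a finite linear combination of products
$$\bigl\|\partial_{\beta_1}\psi_{i,\eps_T}\bigr\|_\infty \cdot \bigl\|\rho_{\eps_T}^{(\beta_2)}\bigr\|_\infty \cdot \bigl\|\partial_{\beta_3}\vartheta_{\eps_T}\bigr\|_\infty,$$
with $|\beta_1|+|\beta_2|+|\beta_3|\le q$, times an absolute constant depending only on $\cK$ and $q$. Invoking \eqref{eq:norm1} and \eqref{eq:norm2}, each such product is bounded by $\eps_T^{-(1+q)}\cdot\eps_T^{-(1+q)}\cdot\eps_T^{-\theta_q}$, giving $M_{T,q} \ll \eps_T^{-r_q}$ with $r_q:=2(1+q)+\theta_q > 0$.

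The only even mildly delicate step is verifying that the coordinate change $\pi$ does not deteriorate the bound in $T$. This reduces, as indicated above, to the two facts that (i) the support of $f_{T,i}(\cdot,y)$ stays in a compact region of $\bR_*^{\bd}$ uniform in $T$, and (ii) $\tau_T(s)^{-1}$ converges to the constant $\tau_\infty^{-1}$ and stays smooth in $s$ with uniform bounds, by \eqref{deftauinfty}. Both are built into the construction, so the chain rule contributes only $T$-independent multiplicative constants that are absorbed into the implicit constant of $\ll$.
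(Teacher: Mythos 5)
Your proposal is correct and follows essentially the same route as the paper: reduce to estimating the $C^q$-norms of $\widetilde{f}_{T,i}(\cdot,y)$, since the coordinate map $\pid$ and the matrices $\tau_T(s)^{-1}$ have uniformly bounded derivatives on the fixed compact support, and then invoke the bounds \eqref{eq:norm1}--\eqref{eq:norm2} on $\psi_{i,\eps_T}$, $\rho_{\eps_T}$, $\vartheta_{\eps_T}$. Your treatment is in fact slightly more explicit than the paper's about the chain/Leibniz rule bookkeeping and the exact exponent $r_q$, but it is the same argument.
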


\vspace{0.2cm}

\begin{proof}
We use that $f_{T,i}(\cdot, y) = \widetilde{f}_{T,i}(\cdot,y) \circ \pid$, and 
the maps $\widetilde{f}_{T,i}(\cdot,y)$ are supported in a fixed compact
subset of $\bR^{k-1} \times \bR \times \bS_{\bd}$, which is independent of $y \in Y_{T,i}$. Then the restrictions to this compact set of all
partial derivatives of the map $\pid$ are uniformly bounded. Therefore,
it is sufficient to estimate 
\[
\widetilde{M}_{T} := \max_{i} \int_{Y_{T}} \big\|\widetilde{f}_{T,i}(\cdot,y)\big\|_{C^0}\, d\kappa_{T,i}(y)
\quad\hbox{and}\quad
\widetilde{M}_{T,q} := \max_{i} \sup_{y_i \in Y_{T}} \big\|\widetilde{f}_{T,i}(\cdot,y)\big\|_{C^q}.
\]
We recall from \eqref{deffTi} that 
\[
\widetilde{f}_{T,i}\big((u,s,\xi),y\big) = \psi_{i,\eps_T}\big(\tau_T(s)^{-1}u\big) \, \rho_{\eps_T}(s-y) \, \vartheta_{\eps_T}(\xi).
\]
According to \eqref{eq:chi_0}, \eqref{eq:norm1}, and \eqref{eq:chi_1},
$$
\|\psi_{i,\eps_T}\|_{C^0}\le 1,\quad\quad \|\rho_{\eps_T}\|_{C^0}\ll \eps_T^{-1},\quad\quad \|  \vartheta_{\eps_T}\|_{C^0}\le 1.
$$
Hence, we conclude that $\big\|\widetilde{f}_{T,i}(\cdot,y)\big\|_{C^0}\ll \eps_T^{-1}.$ This proves the first estimate.
Using additionally \eqref{eq:norm2}, we conclude that also 
$\big\|\widetilde{f}_{T,i}(\cdot,y)\big\|_{C^q}\ll \eps_T^{-r_q}$ 
for some $r_q>0$, which implies the second estimate.
\end{proof}

\subsection{Proof of Theorem \ref{main}}
\label{subsec:punchline}

Let us now summarize what we have done in this technical section. 
The aim has been to produce a smooth approximations $F_T$ 
for the indicator functions $\chi_{\Omega_T}$ to which the arguments of Section \ref{sec:general} apply. 
These approximations are given explicitly in \eqref{specialformFT}.
They are integrals of varying averages which are fibered over the finite  measure spaces 
\[
(Y_T,\kappa_T) = \left((\log I)_{\eps_T},\Leb \mid_{(\log I)_{\eps_T}}\right).
\]
These averages are constructed using finite subsets $\widetilde{Q}_{T,i}
$ and  ${Q}_{T,i}(y)$ of  $\bR^{k-1}$, defined in \eqref{defQTSitilde0} and \eqref{defQTSitilde}, and Borel maps
$\beta_T : \bR^{k-1} \times Y_T \ra \bR^{k-1}$ and  $\widetilde{\beta}_T : \bR^{k-1} \ra \bR^{k-1}$,
defined in \eqref{eq:betta}.
The approximations $F_T$ depend on a choice of a parameter $\eps_T$, which we take $\eps_T = \Vol(\Omega_T)^{-\eta}$ for some $\eta > 0$. In order
for these approximations to be useful for us, we arrange that
\begin{equation}\label{eq:norms_0}
\big\|\chi_{\Omega_T} - F_T\big\|_{L^p(X)} = o\left(\Vol(\Omega_T)^{1/2}\right)
\quad\hbox{ as $T \ra \infty$},\quad \hbox{for $p=1,2$}.
\end{equation}
According to  Lemma \ref{lemma_eta},  
one can take $\eta > 1$.  Then  \eqref{eq:norms_0} holds.
The averages are further made up by Borel functions $f_{T,i} : \bR^d \times Y_T \ra [0,\infty)$, which are defined in  \eqref{deffTi} and \eqref{eq:f_tilde}. These functions are smooth in the first variable, but unbounded as $T \ra \infty$. They are however "bounded on average", in 
the sense that there are Borel functions $h_{T,i} : \bR^d \times Y_T \ra [0,\infty)$
defined in Section \ref{sec:h}. Ultimately, this provides the framework outlined
in (I.a)--(I.c) and (II.a)--(II.c) from Section \ref{sec:general}, so that
we can apply Theorem \ref{thm_criterion} with $V_T=\hbox{Vol}(\Omega_T)$.
The conditions on the norms  $M_T$ and $M_{T,q}$ have been verified in Lemma \ref{l:norms}
with $\theta_0=\eta>1$. We recall that the limit 
$$
\sigma := \lim_{T\to\infty} V_T^{-1/2}\left\|\widehat{\chi}_{\Omega_T} - \int_{\xd} \widehat{\chi}_{\Omega_T}d\mu \right\|_{L^2(X)}.
$$
has been computed in Corollary \ref{cor:l2}. In view of \eqref{eq:norms_0},
it follows from Corollary \ref{cor_RogersL2} that 
\begin{equation}\label{eq:norm3}
\big\|\widehat{\chi}_{\Omega_T} - \widehat{F}_T\big\|_{L^p(X)}=o\big(V_T^{1/2}\big)\quad \hbox{as $T\to\infty$,}
\quad \hbox{for $p=1,2$.}
\end{equation}
Hence, we conclude that also 
$$
\lim_{T\to\infty} V_T^{-1/2}\left\|\widehat{F}_{T} - \int_{\xd} \widehat{F}_{T}d\mu \right\|_{L^2(X)}=\sigma.
$$
Now we have verified all the assumptions of Theorem \ref{thm_criterion}. \\

We conclude that the functions $V_T^{-1/2}\big(\widehat{F}_{T} - \int_{\xd} \widehat{F}_{T}d\mu\big)$ converges in distribution to the Normal Law with variance $\sigma$ when $d>4(1+\eta)$ with some $\eta>1$, namely, when $d\ge 9$. Because of \eqref{eq:norm3}, the 
functions 
$$
V_T^{-1/2}\left(\widehat{\chi}_{\Omega_T}(\Lambda) - \int_{\xd} \widehat{\chi}_{\Omega_T}d\mu\right)=
V_T^{-1/2}\Big( |\Lambda\cap \Omega_T| - \hbox{Vol}(\Omega_T)\Big)
$$
also converges in distribution to the same limit.

\end{document}